\documentclass[12pt]{amsart}
\usepackage{amscd,amsxtra,amssymb,mathrsfs, bbm,url}
\usepackage{amsmath}

\usepackage[paper=a4paper,left=25mm,right=25mm,top=30mm,bottom=30mm]{geometry}
\usepackage{tikz-cd}
\theoremstyle{plain}
\newtheorem{thm}{Theorem}[section]

\newtheorem{lem}[thm]{Lemma}
\newtheorem{prop}[thm]{Proposition}
\newtheorem{cor}[thm]{Corollary}
\newtheorem{defn}[thm]{Definition}
\theoremstyle{definition}

\newtheorem{ex}[thm]{Example}

\theoremstyle{remark}
\newtheorem{rem}[thm]{Remark}

\begin{document}

\title{Harder-Narasimhan Filtration on Moment Map for Quiver Representations}
\author{Ching Yan Timothy Yau}
\date{\today}

\maketitle

\begin{abstract}
For a quiver without loops with many edges, we generalize the methods of Kac, Crawley-Boevey and Reineke and compute the dimension of Harder-Narasimhan strata of the zero set of the moment map. We notice a link between this dimension and the terms in the Kac's polynomial, which is given by the equivariant cohomology of this zero set.
\end{abstract}

\section{Introduction}
\hspace{-12pt}Harder-Narasimhan stratification, originally defined for vector bundles, was defined on the moduli space $Rep(Q,\alpha)$ of quiver representations by Reineke in \cite{reineke2003hn}. They were shown in \cite{hoskins2014stratifications},\cite{hoskins2018stratifications} to coincide with the Hesselink strata for the action of the general linear group on the same representation space. In general, if we have a free action of a reductive group $G$ on a smooth algebraic variety $X$, then these strata contain a lot of information about the equivariant cohomology $H_G^{\bullet}(X)$ (cf. \cite{kirwan1984cohomology}). 
\\~\\ Meanwhile, Kac's polynomial was introduced in \cite{kac1983root} and counts the number of absolutely indecomposable representations of a quiver of fixed dimension over finite fields. For indivisible dimension vectors, it was shown by Crawley-Boevey and Van den Bergh that \cite{crawley2004absolutely} that the coefficients of Kac's polynomial can be expressed as the dimension of singular cohomology groups of the Nakajima's quiver variety with zero framing. This variety is given by the GIT quotient of the semistable part of the zero level set of the moment map $\mu$. Therefore, its singular cohomology is equal to the equivariant cohomology $H_G(\mu^{-1}(0)^{ss})$. 
\\~\\ We recall that if the semistable stratum is nonempty, then the Harder-Narasimhan stratification partitions the moduli space of quiver representations into an open semistable stratums and other smaller stratum. The formula given in \cite{kirwan1984cohomology} is based on computing the codimension of these smaller stratum. For the moduli space $Rep(Q,\alpha)$, such codimension is computed in \cite{reineke2003hn}. We can define a stratification on $\mu^{-1}(0)$ by restricting the stratification of $Rep(\overline{Q},\alpha)$.
\\~\\Given a quiver $Q=(I,\Omega)$, we can consider the graph $Q_{\underline{n}}$ where $\underline{n}\in\mathbb N_{\geq 1}^{\Omega}$, and $Q_{\underline{n}}$ is obtained by replacing an edge $h:i\to j\in\Omega$ with $\underline{n}_h$ identical edges. When $\underline{n}$ is sufficiently large, we provide a formula for the dimension of the Harder-Narasimhan strata on $\mu^{-1}(0)$, when the quiver has sufficiently many edges (Theorem 4.6, Corollary 5.5). The same idea was pursued in \cite{hennecart2021asymptotic}. In that paper, Hennecart showed, using a formula given in \cite{hua2000counting}, that the coefficients of the Kac's polynomial stabilize. In fact, by the formula, the Kac's polynomial can be written naturally as a sum of polynomials $$\sum_jq^{l_j(\underline{n})}P_j(q)/Q_j(q)$$ indexed by the Harder-Narasimhan stratum. Here $l_j(\underline{n})$ is a linear form in $\underline{n}$ and $P_j,Q_j$ are polynomials independent of $\underline{n}$. The second main result of this paper asserts that these linear forms can match the dimension of the Harder-Narasimhan stratum. 
\\~\\ Section 2 of this paper recalls the basic notion and results in quiver representations, Harder-Narasimhan stratifications and Kac's polynomial. 
\\~\\ In Section 3,4 we generalize the method in \cite{crawley1998noncommutative}, \cite{crawley2001geometry},\cite{reineke2003hn}. We prove an exact sequence analogous to that in \cite[Section 3]{crawley2001geometry} that will allow us to compute the dimension of $\mu^{-1}(0)$.
\\~\\ In Section 5, we provide a weaker bound on the number of edges for which our dimension formula holds. We also compare the codimension of the Harder-Narasimhan stratum of $\mu^{-1}(0)$ and $Rep(\overline{Q},\alpha)$. We deduce that they differ by a constant that is independent of $\underline{n}$.
\\~\\ In Section 6, we recall Hua's formula and relate it to the codimension formula we obtained in the previous section.
\section{Background}
\hspace{-13pt}Throughout the entire paper, we will consider a quiver $Q=(I,\Omega)$, where $I$ is the set of vertices and $\Omega$ is the set of edges. If $h\in\Omega$, we let $t(h)$ denote its tail and $s(h)$ denote its head. Suppose $v,w\in\mathbb Z^I$, we define the Euler form $\langle\bullet,\bullet\rangle$ by
$$\langle v,w\rangle=\sum_{i\in I}v_iw_i-\sum_{h\in\Omega}v_{s(h)}w_{t(h)}$$
and define the symmetrized Euler form $(\bullet,\bullet)$ by $(v,w)=\langle v,w\rangle+\langle w,v\rangle$.

\subsection{Quiver Varieties}
We use \cite{kirillov2016quiver} as a reference for this subsection.
Let $\alpha\in\mathbb Z^I$, the set of complex representations of a quiver $Q$ with dimension vector $\alpha$ will be denoted by $Rep(Q,\alpha)$. It has a natural structure of a vector space. The group 
$$GL_{\alpha}(\mathbb C)=\prod_{i\in I}GL_{\alpha_i}(\mathbb C)$$
acts on the space $Rep(Q,\alpha)$ by conjugation. This action factors through the subgroup $\mathbb C^{\times}\subset GL_{\alpha}(\mathbb C)$ that consists of the same multiple of identity at each entry. Now, suppose $\theta\in\mathbb Z^{I}$ and fix a character $\chi_{\theta}$ of $GL_{\alpha}(\mathbb C)$ which must be of the form 
$$g\mapsto \prod\det(g_i)^{-\theta_i}$$
Assume that $\theta\cdot\alpha=0$, so that $\theta$ can be treated as a character of $PGL_{\alpha}(\mathbb C)=GL_{\alpha}(\mathbb C)/\mathbb C^{\times}$. Using standard constructions in  geometric invariant theory, we can form the categorical quotient
$$\mathcal{R}_0(\alpha)=Rep(Q,\alpha)/\!/PGL_{\alpha}(\mathbb C)$$
and the GIT quotient
$$\mathcal{R}_{\theta}(\alpha)=Rep(Q,\alpha)/\!/_{\chi_{\theta}}PGL(v)$$
We recall the following definition of semistability as defined in \cite{king1994moduli}
\begin{defn}
    Let $\theta\in\mathbb R^{I}$, we say that a representation $V\in Rep(Q,\alpha)$ is $\theta$-semistable if for every nontrivial subrepresentation $W$ of $V$, 
    $$\dim W\cdot \theta\leq 0$$
    and $\theta$-stable if the above inequality is strict.
\end{defn}
\begin{thm}
    \cite{king1994moduli} Under the action of $GL_{\alpha}(\mathbb C)$, points of $\mathcal{R}_0(\alpha)$ corresponds to orbits of semisimple representations and points of $\mathcal{R}_{\theta}(\alpha)$ corresponds to orbits of $\theta$-semistable representations.
\end{thm}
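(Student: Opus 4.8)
The plan is to derive this from Mumford's geometric invariant theory together with the Hilbert--Mumford numerical criterion, following King \cite{king1994moduli}. Write $G=GL_{\alpha}(\mathbb C)$ and $X=Rep(Q,\alpha)$, so that $\mathcal R_{0}(\alpha)=\operatorname{Spec}\mathbb C[X]^{G}$ while $\mathcal R_{\theta}(\alpha)=\operatorname{Proj}\bigoplus_{n\geq 0}\mathbb C[X]^{G,\chi_{\theta}^{n}}$, the character $\chi_{\theta}$ providing a $G$-linearization of the trivial line bundle on $X$. For the affine quotient I would invoke the classical fact that each fibre of $X\to\mathcal R_{0}(\alpha)$ contains a unique closed $G$-orbit: degenerating a representation $V$ along the filtration whose associated graded is the semisimplification $V^{ss}$ exhibits $V^{ss}\in\overline{G\cdot V}$, and a semisimple representation has reductive stabilizer and closed orbit; hence the points of $\mathcal R_{0}(\alpha)$ are exactly the isomorphism classes of semisimple representations.

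For $\mathcal R_{\theta}(\alpha)$ the crux is King's reformulation of GIT-semistability. By the Hilbert--Mumford criterion, $V\in X$ is $\chi_{\theta}$-semistable if and only if $\langle\chi_{\theta},\lambda\rangle\geq 0$ for every one-parameter subgroup $\lambda\colon\mathbb C^{\times}\to G$ such that $\lim_{t\to 0}\lambda(t)\cdot V$ exists in $X$ (the sign dictated by the convention $\chi_{\theta}(g)=\prod\det(g_{i})^{-\theta_{i}}$). Conjugating $\lambda$ to a torus, it is encoded by weight decompositions $\mathbb C^{\alpha_{i}}=\bigoplus_{n}V_{i}^{(n)}$, and the limit exists precisely when each structure map $V_{h}$ is nondecreasing for these gradings --- equivalently, when the subspaces $W^{(\geq n)}=\bigoplus_{m\geq n}V^{(m)}$ form a filtration of $V$ by subrepresentations, with $\lim_{t\to 0}\lambda(t)\cdot V=\bigoplus_{n}W^{(\geq n)}/W^{(\geq n+1)}$. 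A short computation, using $\theta\cdot\alpha=0$ and Abel summation, gives
$$\langle\chi_{\theta},\lambda\rangle=-\sum_{i\in I}\theta_{i}\sum_{n}n\dim V_{i}^{(n)}=-\sum_{n}\theta\cdot\dim W^{(\geq n)},$$
the last sum running over the proper steps of the filtration. Since every subrepresentation $W\subseteq V$ occurs as such a step --- take the two-step grading with $W$ in weight $1$ and a vector-space complement in weight $0$, whose limit is $W\oplus V/W$ --- the condition $\langle\chi_{\theta},\lambda\rangle\geq 0$ for all admissible $\lambda$ is equivalent to $\dim W\cdot\theta\leq 0$ for all subrepresentations $W$, i.e. to $\theta$-semistability in the sense of the definition above.

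Finally I would identify the points of $\mathcal R_{\theta}(\alpha)$ with orbits: two $\chi_{\theta}$-semistable representations have the same image iff the closures of their orbits meet in the semistable locus, and running the degeneration argument of the first paragraph inside the finite-length abelian category of $\theta$-semistable representations of slope $0$, whose simple objects are the $\theta$-stable ones, shows that this happens iff they are $S$-equivalent; so the closed orbits are those of the $\theta$-polystable representations, and the points of $\mathcal R_{\theta}(\alpha)$ are their orbits. I expect the main obstacle to be the middle step --- setting up the Hilbert--Mumford criterion for a linearization coming from a character on an affine variety and tracking the signs correctly through the weight computation; the remainder is the standard dictionary between fibres of a GIT quotient and $S$-equivalence classes.
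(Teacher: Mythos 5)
This statement is quoted from King and the paper gives no proof of its own; your proposal follows King's original argument (closed orbits in the affine quotient are exactly the semisimple representations, the Hilbert--Mumford criterion plus the weight/Abel-summation computation translates $\chi_{\theta}$-semistability into the subrepresentation inequality $\dim W\cdot\theta\leq 0$, and points of the twisted quotient are $S$-equivalence classes of semistable representations), so it is correct and essentially the same approach as the cited source. The only caveat is cosmetic: closedness of the orbit of a semisimple representation should be justified by the standard unique-closed-orbit/associated-graded argument rather than by reductivity of the stabilizer alone, since a reductive stabilizer does not by itself imply a closed orbit.
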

\hspace{-12pt}Let $\overline{Q}$ be the double quiver of $Q$. That is, $Q$ has vertex set $I$ and edge set $\Omega\cup \Omega^{op}$, where $\Omega^{op}$ consists of edges in $\Omega$ but in the opposite direction. We also let $Q^{op}$ to be the quiver with vertex set $I$ and edge set $\Omega^{op}$. We notice that 
$$Rep(\overline{Q},\alpha)\cong T^*Rep(Q,\alpha)$$
Let $\displaystyle \mathfrak{g}_{\alpha}=\bigoplus_{i\in I}\mathfrak{gl}_{\alpha_i}(\mathbb C)$. It is well-known that the action of $GL_{\alpha}(\mathbb C)$ on $Rep(Q,\alpha)$ is Hamiltonian with moment map 
$$\mu:Rep(\overline{Q},\alpha)\to\mathfrak{g}_{\alpha}^*$$
$$(x_h)_{h\in\Omega}\mapsto \sum_{h\in\Omega}(x_hx_{\overline{h}}-x_{\overline{h}}x_h)$$
here $\overline{h}$ is the opposite edge of $h$.
\begin{defn}
    Suppose $\theta\in\mathbb Z^I$, define the quiver variety by
    $$\mathcal{M}^{\theta}_{\alpha}(Q)=\mu^{-1}(0)/\!\!/_{\chi_{\theta}}GL_{\alpha}(\mathbb C)$$
\end{defn}
\begin{rem} In \cite{nakajima1994instantons}, \cite{nakajima1998quiver}, Nakajima defined a more general notion of quiver varieties with framing. Definition 2.3 corresponds to Nakajima's notion when the framing is zero.
\end{rem}
\hspace{-14pt}We recall also the main result in \cite{crawley2001geometry}. It gives the dimension of $\mu^{-1}(0)$ under some conditions.
\begin{thm}
    \cite[Theorem 1.1]{crawley2001geometry} Let $p(\theta)=1-\langle\theta,\theta\rangle$ for all $\theta\in\mathbb Z^I$, then if $\displaystyle p(\alpha)\geq\sum_{t=1}^rp(\beta^{(t)})$ for any decomposition $\alpha=\beta^{(1)}+...+\beta^{(r)}$ with the $\beta^{(t)}$ positive roots, then $\mu$ is flat and $\mu^{-1}(0)$ has dimension 
    $$\alpha\cdot\alpha-1+2p(\alpha)$$
    here $\alpha\cdot\alpha=\sum\alpha_i^2$.
\end{thm}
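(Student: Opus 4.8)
The plan is to separate the (automatic) lower bound from the genuinely hard upper bound, and to deduce flatness from the latter. First, note that $\mu$ factors as $Rep(\overline{Q},\alpha)\to\mathfrak{h}\hookrightarrow\mathfrak{g}_{\alpha}^*$, where $\mathfrak{h}=\{\xi:\sum_i\operatorname{tr}\xi_i=0\}$ has codimension $1$; hence every nonempty fibre $\mu^{-1}(\xi)$ is cut out in the affine space $Rep(\overline{Q},\alpha)$ by $\dim\mathfrak{h}=\alpha\cdot\alpha-1$ equations, so by Krull's height theorem each of its components has dimension $\ge\dim Rep(\overline{Q},\alpha)-\alpha\cdot\alpha+1=\alpha\cdot\alpha-1+2p(\alpha)$; in particular $\mu^{-1}(0)$ is nonempty of dimension at least this. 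Moreover $\mu(tx)=t^2\mu(x)$, so $\mu^{-1}(t^2\xi)\cong\mu^{-1}(\xi)$ for $t\ne0$, and letting $t\to0$ along a component of $\{(x,t):\mu(x)=t^2\xi\}$ dominating $\mathbb{A}^1$ gives $\dim\mu^{-1}(0)\ge\dim\mu^{-1}(\xi)$ for \emph{every} $\xi$. Therefore it suffices to prove the single reverse inequality $\dim\mu^{-1}(0)\le\alpha\cdot\alpha-1+2p(\alpha)$: together with the preceding remarks this forces every nonempty fibre of $\mu$ to have dimension exactly $\dim Rep(\overline{Q},\alpha)-\dim\mathfrak{h}$, whence $\mu$ is flat onto $\mathfrak{h}$ by ``miracle flatness'' ($Rep(\overline{Q},\alpha)$ is smooth, hence Cohen--Macaulay, and $\mathfrak{h}$ is regular), and $\dim\mu^{-1}(0)$ is as stated.

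For the remaining inequality, recall $\mu^{-1}(0)=Rep(\Pi,\alpha)$ for the preprojective algebra $\Pi=\Pi(Q)$. I would use the self-dual four-term exact sequence for $\Pi$-modules (the sequence of \cite[\S3]{crawley2001geometry} that the present paper generalises) relating $\operatorname{Hom}_\Pi$ and $\operatorname{Ext}^1_\Pi$ to the vertexwise and arrowwise Hom spaces; reading it as the tangent complex $\mathfrak{g}_\alpha\to T_xRep(\overline{Q},\alpha)\to\mathfrak{g}_\alpha^*$ of the $GL_\alpha$-action and the moment map at a point $x$ with module $M$ of dimension $\alpha$, its cohomologies are $\operatorname{End}_\Pi(M)$, $\operatorname{Ext}^1_\Pi(M,M)$ and $\operatorname{Ext}^2_\Pi(M,M)$, and self-duality of the complex gives $\dim\operatorname{Ext}^2_\Pi(M,M)=\dim\operatorname{End}_\Pi(M)$. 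This yields
$$\dim T_x\mu^{-1}(0)=\dim Rep(\overline{Q},\alpha)-\alpha\cdot\alpha+\dim\operatorname{End}_\Pi(M),\qquad \dim\operatorname{Ext}^1_\Pi(M,M)=2\dim\operatorname{End}_\Pi(M)-2+2p(\alpha).$$
In particular at a brick ($\dim\operatorname{End}_\Pi(M)=1$) one gets $\dim T_x\mu^{-1}(0)=\alpha\cdot\alpha-1+2p(\alpha)$, so there $\mu^{-1}(0)$ is smooth of exactly the expected dimension; the issue is the loci where $M$ is not a brick.

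To bound those, I would stratify $\mu^{-1}(0)$ by the type of the decomposition $M\cong\bigoplus_t M_t^{\oplus e_t}$ into pairwise non-isomorphic indecomposable $\Pi$-modules; by Crawley--Boevey's theorem the $\beta^{(t)}:=\dim M_t$ are positive roots and $\sum_t e_t\beta^{(t)}=\alpha$. For each stratum I would estimate its dimension as (dimension of a $GL_\alpha$-orbit) + (dimension of the space of isomorphism classes of such $M$), controlling the orbit via $\operatorname{Aut}(M)$ (equivalently $\operatorname{End}_\Pi(M)$) and the space of isomorphism classes by induction on $\sum_i\alpha_i$: for a proper summand the inductive hypothesis supplies $\dim Rep(\Pi,\beta^{(t)})=\beta^{(t)}\!\cdot\!\beta^{(t)}-1+2p(\beta^{(t)})$, while the top stratum (indecomposable of dimension $\alpha$) is handled by the $\operatorname{Ext}^1$-formula above after a further stratification of the non-brick indecomposables by a non-invertible, non-nilpotent endomorphism, splitting $M$ along its kernel and image into strictly smaller dimension vectors. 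After collecting terms each stratum dimension is dominated by $\alpha\cdot\alpha-1+2\sum_t p(\beta^{(t)})$ up to corrections the computation must show are $\le0$, and the hypothesis $p(\alpha)\ge\sum_t p(\beta^{(t)})$ is precisely what makes every such bound $\le\alpha\cdot\alpha-1+2p(\alpha)$; taking the maximum over strata gives the desired inequality. The delicate point — where essentially all the work sits — is this stratum estimate: one must simultaneously control $\operatorname{End}_\Pi$ and the moduli of the indecomposable constituents and track the gluing data so that the errors assemble into exactly $2\bigl(\sum_t p(\beta^{(t)})-p(\alpha)\bigr)\le0$, with the main obstruction being the non-brick indecomposables (imaginary roots), for which the naive tangent-space bound overshoots by $\dim\operatorname{End}_\Pi(M)-1>0$ and the induction must be closed by hand.
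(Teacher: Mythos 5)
This theorem is quoted by the paper from Crawley--Boevey \cite[Theorem 1.1]{crawley2001geometry} without proof, so your attempt should be measured against that proof, whose method Sections 3--4 of the present paper generalize. The first half of your argument is sound and is essentially the standard one: the image of $\mu$ lies in the trace-zero hyperplane $\mathfrak h$ of dimension $\alpha\cdot\alpha-1$, Krull's height theorem gives every component of every nonempty fibre dimension at least $\dim Rep(\overline Q,\alpha)-(\alpha\cdot\alpha-1)=\alpha\cdot\alpha-1+2p(\alpha)$, the contracting $\mathbb C^{\times}$-action (with $\mu(tx)=t^2\mu(x)$) shows that $\mu^{-1}(0)$ realizes the maximal fibre dimension, and miracle flatness then reduces the whole theorem to the single inequality $\dim\mu^{-1}(0)\le\alpha\cdot\alpha-1+2p(\alpha)$.

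That inequality is where your sketch has genuine gaps. First, your device for the hardest case --- non-brick indecomposable $\Pi$-modules --- is to stratify ``by a non-invertible, non-nilpotent endomorphism, splitting $M$ along its kernel and image''; but by Fitting's lemma a finite-dimensional indecomposable module has local endomorphism ring, so every endomorphism is either invertible or nilpotent, and the proposed stratification is empty. The case you yourself identify as carrying all the work is therefore not handled. Second, your induction appeals to $\dim Rep(\Pi,\beta^{(t)})=\beta^{(t)}\!\cdot\!\beta^{(t)}-1+2p(\beta^{(t)})$ for proper summands, but the hypothesis $p(\alpha)\ge\sum_t p(\beta^{(t)})$ is a condition on $\alpha$ alone and is not inherited by the $\beta^{(t)}$, so the inductive hypothesis need not apply; what is actually needed is an unconditional bound on each decomposition-type stratum, of the shape $\alpha\cdot\alpha-1+p(\alpha)+\sum_t p(\beta^{(t)})$, with the hypothesis invoked only once at the end. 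Crawley--Boevey's proof avoids both difficulties by never stratifying $Rep(\Pi,\alpha)$ directly: he projects $\pi:\mu^{-1}(0)\to Rep(Q,\alpha)$, uses the exact sequence (the one reproved in Section 3 here) to get $\dim\pi^{-1}(x)=\dim Rep(Q^{op},\alpha)-\alpha\cdot\alpha+\dim\operatorname{End}_{kQ}(x)$ whenever the fibre is nonempty, and stratifies $Rep(Q,\alpha)$ by the decomposition type of $x$ as a $kQ$-module, where Kac's theorem (indecomposables exist only for positive roots and depend on at most $p(\beta)$ parameters) supplies the count that yields exactly the unconditional stratum bound above. Your proposal contains no substitute for Kac's parameter count --- a direct count of indecomposable non-brick $\Pi$-modules is essentially the content of the theorem itself --- so as written the key upper bound is not established.
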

\hspace{-13pt}In section 3 and 4 we will generailze the method of proving this result.
\subsection{Harder-Narasimhan Filtration}
Reineke introduced Harder-Narasimhan filtration on quiver representations in \cite{reineke2003hn}. Again let $\theta\in\mathbb Z^I$ and suppose $k$ is any field. For a $k$-representation $V$ of $Q$ with dimension vector $\beta$, define
$$\mu(V)=\frac{\displaystyle\sum_{i\in I}\beta_i\theta_i}{\displaystyle \sum_{i\in I}\beta_i}$$
In particular if $\theta\cdot\alpha=0$ then $\mu(V)=0$ for all $V\in Rep(Q,\alpha)$, and $V$ is $\theta$-semistable if and only if $\mu(W)\leq 0$ for all proper subrepresentation of $V$.
\begin{defn}\cite[Definition 2.4]{reineke2003hn} Let $V$ be a representation of $Q$. A Harder-Narasimhan filtration of $X$ is a filtration $0=V_0\subset ...\subset V_s=V$ such that the quotient $V_i/V_{i-1}$ are semistable for $i=1,...,s$ and that 
$$\mu(V_1/V_0)>\mu(V_2/V_1)>...>\mu(V_s/V_{s-1})$$
\end{defn}
\begin{thm}
    \cite[Proposition 2.5]{reineke2003hn} Any representation of $Q$ possess a unique Harder-Narasimhan filtration.
\end{thm}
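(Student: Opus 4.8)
The plan is to run the classical Harder--Narasimhan argument, with the combinatorial slope $\mu$ playing the role of the slope of a vector bundle; the two things that need care are the existence of a canonical maximal destabilizing subrepresentation and a Hom-vanishing lemma, and everything else is induction on $\dim V$.

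First I would record the elementary formal properties of $\mu$. For a nonzero representation $W$ with dimension vector $\beta$ put $|W|=\sum_{i\in I}\beta_i$ and $\mathrm{wt}(W)=\sum_{i\in I}\beta_i\theta_i$, so $\mu(W)=\mathrm{wt}(W)/|W|$; both $|{\cdot}|$ and $\mathrm{wt}({\cdot})$ are additive on short exact sequences. From additivity one gets the see-saw property: for $0\to A\to B\to C\to 0$ with $A,C\neq 0$ one has $\mu(A)<\mu(B)\Leftrightarrow\mu(B)<\mu(C)$, and likewise with $=$ or $>$. The crucial finiteness input is that the dimension vectors of subrepresentations of $V$ lie in the finite set $\{\beta:0\le\beta_i\le(\dim V)_i\ \forall i\}$, so $\mu$ takes only finitely many values on nonzero subrepresentations of $V$; hence $\mu_{\max}(V):=\max\{\mu(W):0\neq W\subseteq V\}$ is attained.

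Next I would prove existence of the maximal destabilizing subrepresentation. Let $m=\mu_{\max}(V)$ and let $\mathcal S$ be the set of subrepresentations of $V$ of slope $m$. Given $W,W'\in\mathcal S$, apply additivity to $0\to W\cap W'\to W\oplus W'\to W+W'\to 0$: since $\mathrm{wt}(W)=m|W|$ and $\mathrm{wt}(W')=m|W'|$ one obtains $|W\cap W'|(\mu(W\cap W')-m)=-|W+W'|(\mu(W+W')-m)$, and because both $W\cap W'$ and $W+W'$ are subrepresentations of $V$ both sides must vanish, so $W+W'\in\mathcal S$. Thus $\mathcal S$ is stable under sums, and having finitely many possible dimension vectors it has a unique largest member $V_1=\sum_{W\in\mathcal S}W$, which is semistable since any subrepresentation of $V_1$ is one of $V$, hence of slope $\le m=\mu(V_1)$. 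The same idea applied to preimages gives $\mu_{\max}(V/V_1)<m$: if $\bar U\subseteq V/V_1$ had $\mu(\bar U)\ge m$, its preimage $U\supsetneq V_1$ in $V$ would satisfy $\mu(U)\ge m$ by additivity, contradicting maximality of $V_1$. Existence of the HN filtration now follows by induction on $\dim V$: if $V$ is semistable take $0\subset V$; otherwise pull back an HN filtration of $V/V_1$ (which exists by induction) to $V_1=W_1\subset W_2\subset\cdots\subset V$ with semistable quotients, and note the slope condition holds because $\mu(W_2/W_1)\le\mu_{\max}(V/V_1)<m=\mu(W_1)$ while the remaining strict inequalities come from the filtration of $V/V_1$.

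For uniqueness I would use the standard vanishing lemma: if $A,B$ are semistable with $\mu(A)>\mu(B)$ then $\mathrm{Hom}(A,B)=0$, because the image of a nonzero map would be both a quotient of $A$ (slope $\ge\mu(A)$) and a subrepresentation of $B$ (slope $\le\mu(B)$). Given two HN filtrations $0=V_0\subset\cdots\subset V_s=V$ and $0=V_0'\subset\cdots\subset V_t'=V$, assume without loss of generality $\mu(V_1)\ge\mu(V_1')$. Since $V/V_1'$ is filtered by semistable quotients of slope $<\mu(V_1')\le\mu(V_1)$, the vanishing lemma applied along that filtration gives $\mathrm{Hom}(V_1,V/V_1')=0$, hence $V_1\subseteq V_1'$; then $\mu(V_1)\le\mu(V_1')$ by semistability of $V_1'$, so $\mu(V_1)=\mu(V_1')$, and a one-line see-saw computation on $0\to V_1\to V_1'\to V_1'/V_1\to 0$ forces $\mu(V_1'/V_1)=\mu(V_1)$, which is impossible for a nonzero subrepresentation of $V/V_1$ since $\mu_{\max}(V/V_1)<\mu(V_1)$; hence $V_1=V_1'$. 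Passing to $V/V_1=V/V_1'$ and inducting on $\dim V$ finishes the proof. I expect the main obstacle to be the middle step — showing that maximal-slope subrepresentations are closed under sum and extracting from this the canonical $V_1$ together with the strict drop $\mu_{\max}(V/V_1)<\mu_{\max}(V)$ — and the one recurring subtlety is to make every slope comparison only against subrepresentations of the ambient $V$, where the bound $\mu\le\mu_{\max}(V)$ is available.
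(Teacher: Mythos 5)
Your argument is correct and is essentially the standard proof that the paper itself delegates to Reineke [Rei03, Proposition 2.5]: existence via the maximal destabilizing subrepresentation (maximal-slope subobjects are closed under sums, then induct on dimension) and uniqueness via the Hom-vanishing lemma for semistables of decreasing slope. The only step deserving one more line is the assertion $\mu_{\max}(V/V_1)<\mu(V_1)$ for the first term of an \emph{arbitrary} HN filtration in the uniqueness part (you proved it only for the canonical maximal destabilizer); it follows by the same d\'evissage you already use, since a nonzero subrepresentation of $V/V_1$ of slope $\geq\mu(V_1)$ would admit a nonzero map to some semistable graded piece of slope $<\mu(V_1)$, or it can be bypassed entirely by applying the symmetric vanishing $\mathrm{Hom}(V_1',V/V_1)=0$ once $\mu(V_1)=\mu(V_1')$ is established.
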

\hspace{-13pt}The type of a representation $V$ is the tuple $d^*=(\dim V_1/V_0,...,\dim V_s/V_{s-1})$. We define $Rep(Q,\alpha)_{d^*}^{HN}$ to be the subset of representations whose Harder-Narasimhan filtration is of type $d^*$. The weight of $d^*$ is the dimension vector of $V$. The main result in \cite{reineke2003hn} states that
\begin{thm}
    \cite[Proposition 3.4]{reineke2003hn} We say that $d^*$ with weight $\alpha$ is a HN type if $Rep(Q,\alpha)_{d^*}^{HN}$ is nonempty. The Harder-Narasimhan strata $Rep(Q,\alpha)_{d^*}^{HN}$, when $d^*$ ranges over HN types of $\alpha$ define a stratification of $R_d$ into irreducible, locally closed subvarieties. If $d=(d^1,...,d^s)$ then the codimension of $Rep(Q,\alpha)_{d^*}^{HN}$ in $Rep(Q,\alpha)$ is given by $-\sum_{1\leq k<l\leq s}\langle d^k,d^l\rangle$.
\end{thm}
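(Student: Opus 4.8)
The plan is to realize each Harder--Narasimhan stratum as an associated fibre bundle over a product of semistable loci, following the approach of \cite{reineke2003hn}, and to read off irreducibility, local closedness and the codimension from that picture. Fix a HN type $d^{*}=(d^{1},\dots,d^{s})$ of weight $\alpha=\sum_{k}d^{k}$. Equip each $\mathbb{C}^{\alpha_{i}}$ with the decomposition $\mathbb{C}^{\alpha_{i}}=\bigoplus_{k}\mathbb{C}^{d^{k}_{i}}$ and the associated flag $0\subseteq\mathbb{C}^{d^{1}_{i}}\subseteq\mathbb{C}^{d^{1}_{i}}\oplus\mathbb{C}^{d^{2}_{i}}\subseteq\cdots$, let $P=P_{d^{*}}\subseteq GL_{\alpha}(\mathbb{C})$ be the parabolic stabilizing this flag, and let $R^{d^{*}}\subseteq Rep(Q,\alpha)$ be the linear subspace of representations preserving it. Passing to the associated graded gives a surjective linear map $R^{d^{*}}\twoheadrightarrow\prod_{k}Rep(Q,d^{k})$ whose fibres are affine spaces, and I write $R^{d^{*},ss}$ for the preimage of $\prod_{k}Rep(Q,d^{k})^{ss}$. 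Because $d^{*}$ is a HN type, each $Rep(Q,d^{k})^{ss}$ is nonempty, hence dense in the irreducible affine space $Rep(Q,d^{k})$, so $R^{d^{*},ss}$ is a dense open irreducible subset of $R^{d^{*}}$; moreover $d^{*}$ being a HN type forces $\mu(d^{1})>\cdots>\mu(d^{s})$. The action then yields the candidate parametrization
$$f_{d^{*}}\colon GL_{\alpha}(\mathbb{C})\times_{P}R^{d^{*},ss}\longrightarrow Rep(Q,\alpha),\qquad [g,x]\mapsto g\cdot x.$$

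The heart of the argument is to show $f_{d^{*}}$ is an isomorphism onto $Rep(Q,\alpha)_{d^{*}}^{HN}$. A point of the source is a representation together with a filtration of type $d^{*}$ whose subquotients are semistable, and since the slopes $\mu(d^{k})$ are strictly decreasing this filtration is a Harder--Narasimhan filtration; by Theorem 2.7 it is unique, so $f_{d^{*}}$ is injective with image exactly $Rep(Q,\alpha)_{d^{*}}^{HN}$. To upgrade this bijection to an isomorphism of varieties I would first prove local closedness. For $c\in\mathbb{R}$ the locus $\{\,V:\mu_{\max}(V)\ge c\,\}$, where $\mu_{\max}(V)=\max\{\,\mu(W):0\neq W\subseteq V\,\}$, is closed: it is the finite union, over dimension vectors $0\neq\beta\le\alpha$ with $\mu(\beta)\ge c$, of the loci of representations admitting a subrepresentation of dimension $\beta$, and each such locus is closed, being the image under the proper projection of the incidence variety inside $Rep(Q,\alpha)\times\prod_{i}\mathrm{Gr}(\beta_{i},\alpha_{i})$. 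Feeding this into the recursive structure of the Harder--Narasimhan filtration (strip off the maximal destabilizing subrepresentation, induct on the quotient) shows that the finitely many strata $Rep(Q,\alpha)_{d^{*}}^{HN}$ form a stratification, so each is locally closed with closure a union of strata. Finally, over the locally closed subvariety $Rep(Q,\alpha)_{d^{*}}^{HN}$ the tautological family has constant Harder--Narasimhan type, hence constant subquotient dimensions, so by flatness the terms of its relative Harder--Narasimhan filtration are subbundles; extracting this flag and the subquotients produces a morphism back to $GL_{\alpha}(\mathbb{C})\times_{P}R^{d^{*},ss}$ that inverts $f_{d^{*}}$. (Alternatively one checks $f_{d^{*}}$ is \'etale, which reduces to the vanishing of $\mathrm{Hom}$ from a semistable representation to a semistable representation of strictly smaller slope.)

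Granting that $f_{d^{*}}$ is an isomorphism, the remaining claims are formal. Irreducibility: $GL_{\alpha}(\mathbb{C})$ is connected and $R^{d^{*},ss}$ is irreducible, so $GL_{\alpha}(\mathbb{C})\times_{P}R^{d^{*},ss}$, and therefore $Rep(Q,\alpha)_{d^{*}}^{HN}$, is irreducible. Dimension: the bundle description gives
$$\dim Rep(Q,\alpha)_{d^{*}}^{HN}=\dim GL_{\alpha}(\mathbb{C})-\dim P_{d^{*}}+\dim R^{d^{*}},$$
and inserting $\dim GL_{\alpha}(\mathbb{C})-\dim P_{d^{*}}=\sum_{k<l}\sum_{i}d^{k}_{i}d^{l}_{i}$, $\dim R^{d^{*}}=\sum_{k}\dim Rep(Q,d^{k})+\sum_{k<l}\sum_{h}d^{k}_{s(h)}d^{l}_{t(h)}$ and $\dim Rep(Q,\alpha)=\sum_{k,l}\sum_{h}d^{k}_{s(h)}d^{l}_{t(h)}$, a short computation with the Euler form collapses the codimension to $-\sum_{1\le k<l\le s}\langle d^{k},d^{l}\rangle$.

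The step I expect to be the main obstacle is the second paragraph. The set-theoretic bijection and the description of the image are immediate from uniqueness of the Harder--Narasimhan filtration, but passing to an isomorphism of varieties requires two genuinely geometric inputs: the semicontinuity of Harder--Narasimhan type, i.e.\ the closure relations that make the strata a stratification, and the existence of a relative Harder--Narasimhan filtration over a locus of constant type. Once these are in place, the fibre-bundle bookkeeping, the irreducibility and the codimension computation are routine.
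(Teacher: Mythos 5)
Your proposal is correct and follows essentially the same route as the proof the paper relies on (it cites Reineke's Proposition 3.4 rather than reproving it): realize the stratum via $GL_{\alpha}(\mathbb{C})\times_{P_{d^*}}R^{d^*,ss}$, deduce bijectivity onto the stratum from uniqueness of the Harder--Narasimhan filtration, get local closedness/closure relations from properness of the Grassmannian incidence variety, and read off the codimension from the bundle dimension count. The only point needing care is bookkeeping: the paper's convention that $t(h)$ is the tail and $s(h)$ the head must be matched consistently with the direction in which the flag is preserved and the order of arguments in $\langle\bullet,\bullet\rangle$ so that the final expression comes out as $-\sum_{k<l}\langle d^k,d^l\rangle$.
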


\hspace{-13pt}In particular, we also have the HN strata $Rep(\overline{Q},\alpha)_{d^*}^{HN}$ for the double quiver $\overline{Q}$. Notice that $\mu^{-1}(0)\subset Rep(\overline{Q},\alpha)$.
\begin{defn}
    Suppose $\theta\in\mathbb Z^I$ and let $\mu$ be the corresponding moment map defined in section 2.1. Suppose $d^*$ is a tuple with weight $\alpha$, define
    $$\mu^{-1}(0)_{d^*}^{HN}=Rep(\overline{Q},\alpha)_{d^*}^{HN}\cap\mu^{-1}(0)$$
\end{defn}
\subsection{Kac's Polynomial}
Suppose $q$ is a prime power, let $\mathbb F_q$ be the finite field of $q$ elements. Suppose $\alpha\in\mathbb Z^I$, then by we denote $A_{\alpha}(q)$ the number of absolutely indecomposable representations of $Q$ of dimension vector $\alpha$ over $\mathbb F_q$. 
\begin{thm}
    \cite{kac1983root} $A_{\alpha}(q)\neq 0$ if and only if $\alpha$ is a root of the corresponding Kac-Moody Lie algebra $\mathfrak{g}_{Q}$. In that case, $A_{\alpha}(q)$ is a polynomial in $q$, with degree $1-\langle\alpha,\alpha\rangle$.
\end{thm}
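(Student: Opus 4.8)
\emph{Proof plan (after Kac \cite{kac1983root}).} The plan is to reduce the statement to three ingredients: a stack-theoretic count of $\mathbb{F}_q$-representations, the Bernstein--Gelfand--Ponomarev reflection functors together with the resulting Weyl-group symmetry, and a dimension estimate on the decomposition strata of $Rep(Q,\alpha)$.

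I would begin with the bookkeeping. Over $\mathbb{F}_q$ the affine space $Rep(Q,\alpha)$ has exactly $q^{\dim Rep(Q,\alpha)}$ points, where $\dim Rep(Q,\alpha)=\sum_{h\in\Omega}\alpha_{s(h)}\alpha_{t(h)}=\alpha\cdot\alpha-\langle\alpha,\alpha\rangle$, and $GL_{\alpha}(\mathbb{F}_q)$ acts on it with orbits the isomorphism classes. Counting orbits weighted by $1/|\mathrm{Aut}|$ and invoking Krull--Schmidt expresses the resulting stacky count of all representations as a plethystic exponential of the corresponding count over indecomposables; a Galois-descent and Möbius-inversion argument over the extensions $\mathbb{F}_{q^n}/\mathbb{F}_q$ then relates the number $I_{\alpha}(q)$ of indecomposables to the number $A_{\alpha}(q)$ of absolutely indecomposables (an $\mathbb{F}_q$-indecomposable $V$ has $\mathrm{End}(V)$ local with residue field $\mathbb{F}_{q^r}$, and $V\otimes_{\mathbb{F}_q}\overline{\mathbb{F}}_q$ is a sum of $r$ Galois-conjugate absolutely indecomposables). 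Since $|GL_{\alpha}(\mathbb{F}_q)|$ and $q^{\dim Rep(Q,\alpha)}$ are rational in $q$ and the absolutely indecomposable locus is a constructible subset of $Rep(Q,\alpha)$ defined over $\mathbb{Z}$, these identities combined with a Noetherian induction on constructible sets yield that $A_{\alpha}$ is a polynomial in $q$.

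Next I would exploit the Weyl-group symmetry. For a vertex $i$ that is a sink or a source, the reflection functor gives, over every $\mathbb{F}_{q^n}$, a bijection between indecomposables of dimension $\beta$ and those of dimension $s_i\beta$ for all $\beta\neq e_i$, whence $A_{\beta}(q)=A_{s_i\beta}(q)$ for $\beta\neq e_i$ while $A_{e_i}(q)=1$. Therefore $\{\beta>0:A_{\beta}\neq0\}$ is invariant under all simple reflections, hence under $W$, and $A_{\beta}$ is constant along $W$-orbits. For the ``only if'' implication, suppose $A_{\alpha}\neq0$; then there is an indecomposable $V$ of dimension $\alpha$, necessarily with connected support, so we may assume $Q$ connected. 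Greedily applying reflections to decrease $\sum_i\alpha_i$ either terminates at a simple root (so $\alpha$ lies in the $W$-orbit of a simple root, a real root), or at an element of the fundamental region $F=\{\beta>0:\operatorname{supp}\beta\ \text{connected},\ (\beta,e_i)\le0\ \forall i\}$ (so $\alpha$ is an imaginary root), or produces a vector with a negative coordinate; the last alternative is impossible because the dimension vector of an indecomposable stays in $\mathbb{Z}_{\ge0}^I$ under the reflection functors as long as one avoids $e_i$. Hence $A_{\alpha}\neq0$ forces $\alpha$ to be a positive root.

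Finally the ``if'' implication and the degree. If $\alpha$ is a real root then $W$-invariance gives $A_{\alpha}(q)=A_{e_i}(q)=1\neq0$, and $1-\langle\alpha,\alpha\rangle=0$ since the symmetrized form takes the value $2$, hence $\langle\cdot,\cdot\rangle$ the value $1$, on real roots. If $\alpha$ is imaginary then, by $W$-invariance, I may assume $\alpha\in F$, so $(\alpha,\alpha)\le0$; I would then stratify $Rep(Q,\alpha)$ by decomposition type and show, by induction on $\sum_i\alpha_i$ and using the fundamental-region inequality, that the stratum attached to any proper decomposition $\alpha=\beta^{(1)}+\dots+\beta^{(r)}$ has dimension strictly less than $\dim Rep(Q,\alpha)$. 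Consequently the absolutely indecomposable locus is nonempty, dense and of dimension $\dim Rep(Q,\alpha)$; moreover the Schurian absolutely indecomposables, whose automorphism group is exactly $\mathbb{F}_q^{\times}$, form a dense open inside it, so their $PGL_{\alpha}(\mathbb{F}_q)$-orbits each have size $\sim q^{\alpha\cdot\alpha-1}$. Counting orbits gives $A_{\alpha}(q)\sim q^{\dim Rep(Q,\alpha)-(\alpha\cdot\alpha-1)}=q^{1-\langle\alpha,\alpha\rangle}$, so $A_{\alpha}$ is a nonzero polynomial of degree $1-\langle\alpha,\alpha\rangle$ (the passage from ``for $q\gg0$'' to ``for all prime powers $q$'' being supplied by the polynomiality established above). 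I expect the main obstacle to be precisely this last dimension estimate on the decomposition strata inside the fundamental region; once the reflection functors and the stacky identities are set up, the remainder is essentially formal.
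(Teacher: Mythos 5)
Your overall architecture --- finite-field counting with Galois descent and M\"obius inversion for polynomiality, reflection functors and the fundamental region for the root criterion --- is indeed the route of \cite{kac1983root}, which is all the paper relies on here (Theorem 2.10 is quoted without proof). The genuine gap is in your final step, the degree computation. You claim that for an imaginary root in the fundamental region every stratum of properly decomposable representations has dimension strictly smaller than $\dim Rep(Q,\alpha)$, and that the Schurian absolutely indecomposables form a dense open subset, so that dividing by orbits of size roughly $q^{\alpha\cdot\alpha-1}$ gives the degree $1-\langle\alpha,\alpha\rangle$. Both claims fail for isotropic imaginary roots and their multiples. Take the Kronecker quiver and $\alpha=(2,2)$, a root with $\langle\alpha,\alpha\rangle=0$: the generic representation splits as a sum of two nonisomorphic representations of dimension $(1,1)$, so the decomposable locus is dense of dimension $8=\dim Rep(Q,\alpha)$ while the indecomposable locus has dimension $7$; moreover every indecomposable of dimension $(2,2)$ has two-dimensional endomorphism ring, so there are no Schurian representations at all --- and yet $A_{(2,2)}(q)=q+1$ does have degree $1=1-\langle\alpha,\alpha\rangle$. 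So density of (Schurian) indecomposables is not the mechanism that produces the degree, and your argument proves nothing in this case.

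A second, related gap is the upper bound on the degree even in the cases where Schurian representations are dense: non-Schurian indecomposables have smaller orbits (orbit dimension $\alpha\cdot\alpha-\dim\operatorname{End}$), so a lower-dimensional stratum can a priori carry \emph{more} orbits, and your ``$\sim$'' estimate silently assumes it does not. What Kac actually proves is a statement about numbers of parameters: stratify the absolutely indecomposable locus by $s=\dim\operatorname{End}$ and show $\max_s\bigl(\dim X_s-(\alpha\cdot\alpha-s)\bigr)=1-\langle\alpha,\alpha\rangle$, via the double count of pairs $(g,x)$ with $g\in\operatorname{End}(x)$, using Jordan forms of the unipotent parts --- precisely the trick this paper adapts in Section 4 and Remark 4.7. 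You need that stratified estimate (or an equivalent) both for the upper bound and to rescue the isotropic case. Two smaller points: polynomiality of $A_{\alpha}$ does not follow from the absolutely indecomposable locus being ``constructible over $\mathbb Z$'' (constructible sets need not have polynomial point counts); it follows from the Burnside/Krull--Schmidt/M\"obius identities you set up, so the ``Noetherian induction'' justification should be dropped. And to apply a reflection functor at an arbitrary vertex you must first make it a sink or source, so you need the orientation-independence of $A_{\alpha}$, which itself comes from the counting formula and should be stated as a step.
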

\hspace{-13pt}The polynomial $A_{\alpha}(q)$ is called the Kac's polynomial. A remarkable conjecture by Kac states that the coefficient of the Kac's polynomial are nonnegative. This is proven in full generality in \cite{hausel2013positivity}. A special case was proven in \cite{crawley2004absolutely}, for indivisible dimension vectors $\alpha$. (Recall that $\alpha\in\mathbb Z^I$ is indivisible if $\displaystyle \underset{i\in I}{gcd}(\alpha_i)=1$). In particular, the proof in \cite{crawley2004absolutely} gives a cohomological interpretation of the coefficients of the Kac's polynomial.
\begin{thm}
    \cite[Theorem 1.1]{crawley2004absolutely} Let $\lambda\in\mathbb Z^I$ be such that $\lambda\cdot\alpha=0$ but $\lambda\cdot\beta\neq 0$ for $0<\beta<\alpha$. Let $d=1-\langle\alpha,\alpha\rangle$. Then
    $$A_{\alpha}(q)=\sum_{i=0}^d\dim H^{2d-2i}(\mathcal{M}_{\alpha}^{\lambda},\mathbb C)q^i$$
\end{thm}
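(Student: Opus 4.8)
\emph{Sketch of proof.} The statement concerns the Betti numbers of the smooth variety $\mathcal M_\alpha^\lambda=\mu^{-1}(0)/\!\!/_{\chi_\lambda}GL_\alpha(\mathbb C)$, in which $\lambda$ plays the role of the stability parameter $\theta$ of Definition 2.3, so the natural route is to count $\mathbb F_q$-points and then read off the cohomology via the Weil conjectures. I would begin by recording the geometry: the hypothesis $\lambda\cdot\beta\neq 0$ for $0<\beta<\alpha$ is King's condition forcing $\chi_\lambda$-semistable $=$ $\chi_\lambda$-stable, so $GL_\alpha(\mathbb C)$ acts on $\mu^{-1}(0)^{ss}$ with stabiliser only the central $\mathbb C^\times$, the stable locus is smooth, and $\mathcal M_\alpha^\lambda$ is a smooth geometric (indeed holomorphic-symplectic) quotient; the dimension count of \cite{crawley2001geometry} (Theorem 2.5 above), applied on the stable locus, gives $\dim_{\mathbb C}\mathcal M_\alpha^\lambda=2d$, where $d=1-\langle\alpha,\alpha\rangle$. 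The theorem then reduces to two claims: (I) the point-count identity $\#\mathcal M_\alpha^\lambda(\mathbb F_q)=q^{d}A_\alpha(q)$ for every prime power $q$; and (II) the cohomology $H^{\bullet}(\mathcal M_\alpha^\lambda,\mathbb C)$ is pure of Tate type, supported in even degrees, and zero above degree $2d$.

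For (I): by Kac's theorem (Theorem 2.10 above) $A_\alpha(q)$ counts the isomorphism classes of absolutely indecomposable $\mathbb F_q$-representations of $Q$ of dimension $\alpha$, and Hua's formula (recalled in Section 6, after \cite{hua2000counting}) rewrites this count — through a plethystic-logarithm/M\"obius inversion applied to the generating series of the groupoid cardinalities $\#Rep(Q,\beta)(\mathbb F_q)/\#GL_\beta(\mathbb F_q)$ — as a universal expression in those cardinalities. Passing from $Q$ to $\overline Q$ with its moment-map constraint is Kac's Fourier-transform trick: $\#\mu^{-1}(0)(\mathbb F_q)$ is an average over $Rep(\overline Q,\alpha)(\mathbb F_q)$ of an additive character composed with $\mu$, and running this through the same universal expression produces, once the character sums collapse, a power of $q$ times $A_\alpha(q)$; finally dividing out the free $PGL_\alpha$-action on $\mu^{-1}(0)^{ss}$ — whose fibres over $\mathbb F_q$-points are trivial torsors by Lang's theorem, hence contribute a uniform factor $\#PGL_\alpha(\mathbb F_q)$ — converts this into $\#\mathcal M_\alpha^\lambda(\mathbb F_q)$, and matching powers of $q$ against $\dim\mu^{-1}(0)^{ss}=\alpha\cdot\alpha-1+2d$ isolates the surviving factor $q^{d}$.

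For (II): I would use the $\mathbb C^\times$-action on $Rep(\overline Q,\alpha)$ scaling every arrow of $\overline Q$ with weight one. Since $\mu$ is homogeneous quadratic this preserves $\mu^{-1}(0)$, commutes with $GL_\alpha(\mathbb C)$, and descends both to $\mathcal M_\alpha^\lambda$ and to the affine quotient $\mathcal M_\alpha^0=\mu^{-1}(0)/\!\!/GL_\alpha(\mathbb C)$, making the latter a cone with the origin as attracting fixed point; the GIT morphism $\pi\colon\mathcal M_\alpha^\lambda\to\mathcal M_\alpha^0$ is projective and equivariant, so as $t\to 0$ the whole of $\mathcal M_\alpha^\lambda$ flows into the compact central fibre $L=\pi^{-1}(0)$, yielding a deformation retraction and $H^{\bullet}(\mathcal M_\alpha^\lambda)\cong H^{\bullet}(L)$. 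As $L$ is isotropic for the holomorphic symplectic form (Nakajima's Lagrangian-fibre theorem for quiver varieties) one has $\dim_{\mathbb C}L\le d$, whence $H^{>2d}(\mathcal M_\alpha^\lambda)=0$. Purity, Tate type, and the vanishing of odd cohomology I would extract from the Bialynicki--Birula stratification attached to this $\mathbb C^\times$-action: $\mathcal M_\alpha^\lambda$ is an iterated affine-space bundle over its fixed locus, whose components are themselves smooth (products of) quiver varieties of strictly smaller total dimension, so an induction on $\dim\alpha$ with base case a point closes the argument — alternatively, one invokes the purity of semiprojective holomorphic-symplectic varieties.

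Granting (I) and (II): Poincar\'e duality gives $H^{2k}_c(\mathcal M_\alpha^\lambda)\cong H^{4d-2k}(\mathcal M_\alpha^\lambda)^{\vee}(-2d)$, which is pure of weight $2k$ (Frobenius acting by $q^{k}$) and vanishes unless $d\le k\le 2d$, so the Grothendieck--Lefschetz trace formula reads $q^{d}A_\alpha(q)=\#\mathcal M_\alpha^\lambda(\mathbb F_q)=\sum_{k=d}^{2d}\dim H^{4d-2k}(\mathcal M_\alpha^\lambda)\,q^{k}$; dividing by $q^{d}$ and substituting $i=k-d$ yields exactly $A_\alpha(q)=\sum_{i=0}^{d}\dim H^{2d-2i}(\mathcal M_\alpha^\lambda)\,q^{i}$. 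I expect the main obstacle to be (II): the retraction onto the core and the degree bound $H^{>2d}=0$ are soft, but the purity and odd-vanishing genuinely require the fine geometry of quiver varieties — without them the point count (I) determines only the $E$-polynomial, not the individual Betti numbers. The other delicate point is the bookkeeping in (I): the exact collapse of the character sums and the torsor normalisation, where one must pass to $\overline{\mathbb F}_q$ to control the gap between indecomposable and absolutely indecomposable representations.
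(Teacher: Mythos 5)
This statement is Theorem 2.11 of the paper, and the paper does not prove it: it is quoted as background directly from \cite[Theorem 1.1]{crawley2004absolutely}, so your sketch can only be measured against the argument in the literature. Your overall architecture is the right one and matches that argument: (I) a point count $\#\mathcal M_\alpha^\lambda(\mathbb F_q)=q^dA_\alpha(q)$, (II) purity, evenness in cohomology and the bound $H^{>2d}=0$, and then the trace formula combined with Poincar\'e duality, which you execute correctly in the final paragraph. Your step (II) is also essentially the standard semiprojectivity argument (contraction to the compact core via the scaling $\mathbb C^\times$-action, Lagrangian core for the degree bound), though it is heavier than necessary at the end: once purity is known, polynomial count alone forces odd vanishing and Tate type by the usual $E$-polynomial comparison, so the Bialynicki--Birula induction --- whose identification of the fixed components as smaller quiver varieties is itself a nontrivial claim --- can be avoided.

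The genuine gap is in (I) as you describe it. The additive-character (Fourier) trick computes $\#\mu^{-1}(0)(\mathbb F_q)$, the count of the \emph{entire} zero fibre, whereas the theorem concerns the GIT quotient of the $\chi_\lambda$-semistable locus only; nothing in your sketch explains how the character sums ``collapse'' after restriction to $\mu^{-1}(0)^{ss}$, and isolating the semistable stratum is exactly the hard part (it amounts to a Harder--Narasimhan-type recursion of the kind this paper performs in Sections 5--6 via Hua's formula). Crawley-Boevey and Van den Bergh sidestep this entirely by counting points not of $\mu^{-1}(0)^{ss}/\!\!/_{\chi_\lambda}GL_\alpha$ but of the affine quotient of the \emph{deformed} fibre $\mu^{-1}(\lambda)$ for generic $\lambda$: by the lifting criterion of \cite{crawley2001geometry} (recalled as Proposition 3.8 in this paper), an $\mathbb F_q$-representation $x$ of $Q$ lifts to $\mu^{-1}(\lambda)$ iff every direct summand $\beta$ satisfies $\lambda\cdot\beta=0$, which for generic $\lambda$ forces $x$ to be indecomposable, every point of $\mu^{-1}(\lambda)$ is automatically simple so no stability bookkeeping arises, and the fibres over such $x$ are affine spaces of dimension controlled by $\dim End(x)$ via the exact sequence of \cite{crawley1998noncommutative} (the analogue of Proposition 3.6 here); summing over $x$, handling indecomposable versus absolutely indecomposable by a Galois--Burnside argument, and dividing by the free $PGL_\alpha$-action yields $q^dA_\alpha(q)$. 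If you insist on the model $\mu^{-1}(0)/\!\!/_{\chi_\lambda}GL_\alpha$ appearing in the statement, you must additionally invoke the comparison between the two parameters (hyperk\"ahler rotation / Nakajima's isomorphisms of quiver varieties for generic parameters), which your sketch does not mention. So: right strategy, but the central counting step is asserted rather than made to work, and it is precisely the step that requires either the deformation to $\mu^{-1}(\lambda)$ or an HN recursion.
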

\begin{rem}
    The stability parameter $\lambda$ that satisfies the condition of Theorem 2.11 is called \emph{generic}.
\end{rem}
\hspace{-12pt}It is interesting to see how the coefficients of the Kac polynomial behave under the variation of inputs. We can do this in two directions, either to vary the quiver or the dimension vector.
\\~\\ For the first direction, in \cite{hennecart2021asymptotic}, Hennecart shows that if we increase the number of edges and let it tend to infinity, then the coefficients of the Kac polynomials stabilise. More precisely, if $n\in\mathbb N^{\Omega}$, we denote $Q_{n}$ by the quiver obtained by replacing the edge $h:i\to j$ with identical edges $n_{h}$. Then
\begin{thm}
    \cite[Theorem 3.2]{hennecart2021asymptotic} Let $m=(m_{\alpha})_{\alpha\in\Omega}\in (\mathbb N\cup\{+\infty\})^{\Omega}$, suppose $\alpha\in\mathbb N^I$, then the sequence of polynomials 
    $$A_{Q_{n},d}\in\mathbb N[q]$$
    converges in $\mathbb N[[q]]$ as $n\to m$. In other words, for any integer $k$ the coefficient in $A_{Q_n,d}(q)$ of $1,q,...,q^k$ stabilizes when $n$ is sufficiently close to $m$.
\end{thm}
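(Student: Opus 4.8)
\emph{Proof plan.} The plan is to deduce the statement from Hua's formula (the one recalled in Section~6), which expresses the generating series of Kac polynomials as a plethystic logarithm. For a quiver $Q=(I,\Omega)$ set
$$\mathbb A_{Q}(z,q)=\sum_{\beta\in\mathbb N^{I}}\frac{q^{\sum_{h\in\Omega}\beta_{t(h)}\beta_{s(h)}}}{\prod_{i\in I}|GL_{\beta_i}(\mathbb F_q)|}\,z^{\beta},$$
so that Hua's formula reads $\sum_{\beta\neq 0}\frac{A_{Q,\beta}(q)}{q-1}\,z^{\beta}=\mathrm{Log}\,\mathbb A_{Q}(z,q)$, where $\mathrm{Log}=\sum_{l\ge1}\frac{\mu(l)}{l}\,\psi_l\circ\log$ is the plethystic logarithm ($\mu$ the Möbius function, $\psi_l$ the Adams operation $z_i\mapsto z_i^{l}$, $q\mapsto q^{l}$, and $\log(1+u)=\sum_{r\ge1}\frac{(-1)^{r-1}}{r}u^{r}$). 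The key observation is that replacing $Q$ by $Q_{n}$ affects $\mathbb A$ only by replacing the exponent $\sum_h\beta_{t(h)}\beta_{s(h)}$ with $\sum_h n_h\,\beta_{t(h)}\beta_{s(h)}$ in every numerator, leaving the factors $|GL_{\beta_i}(\mathbb F_q)|$ untouched.

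First I would extract the $z^{\alpha}$-coefficient of both sides. Because $\alpha$ has only finitely many decompositions, this produces a \emph{finite} identity in the field $\mathbb Q(q)$:
$$A_{Q_{n},\alpha}(q)=\sum_{\nu}\varepsilon_\nu\,(q-1)\Bigl(\prod_{j=1}^{r}c_{\beta^{(j)}}(q^{l})\Bigr)\,q^{\,l\sum_{h\in\Omega}n_h\sum_{j=1}^{r}\beta^{(j)}_{t(h)}\beta^{(j)}_{s(h)}},$$
where $c_\beta(q)=\prod_{i\in I}|GL_{\beta_i}(\mathbb F_q)|^{-1}$, the index $\nu$ runs over all triples consisting of an integer $l\ge1$ with $\alpha/l\in\mathbb N^{I}$, an $r\ge1$, and an ordered $r$-tuple $(\beta^{(1)},\dots,\beta^{(r)})$ of nonzero vectors summing to $\alpha/l$, and $\varepsilon_\nu=\frac{\mu(l)}{l}\frac{(-1)^{r-1}}{r}$. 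This is precisely the shape $\sum_\nu q^{\,l_\nu(n)}P_\nu(q)/Q_\nu(q)$ promised in the introduction (the index set, there described by Harder--Narasimhan types, is realized here concretely by these decompositions): the exponent $l_\nu(n)=l\sum_h n_h\sum_j\beta^{(j)}_{t(h)}\beta^{(j)}_{s(h)}$ is a linear form in $n$ with \emph{non-negative} integer coefficients, and $P_\nu/Q_\nu=\varepsilon_\nu(q-1)\prod_j c_{\beta^{(j)}}(q^{l})\in\mathbb Q(q)$ does not depend on $n$.

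I would then apply the Laurent-expansion embedding $\mathbb Q(q)\hookrightarrow\mathbb Q((q))$ at $q=0$; it is a ring homomorphism, so it respects the finite sum above, and it changes nothing on the left since $A_{Q_{n},\alpha}\in\mathbb N[q]$. Each fixed $P_\nu/Q_\nu$ has a pole at $q=0$ of some order $N_\nu$ independent of $n$, so $q^{l_\nu(n)}P_\nu(q)/Q_\nu(q)=\sum_{i\ge -N_\nu}c_{\nu,i}\,q^{\,l_\nu(n)+i}$ and its coefficient of $q^{k}$ is $c_{\nu,\,k-l_\nu(n)}$, which vanishes as soon as $l_\nu(n)>k+N_\nu$. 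Now fix $k$ and put $S=\{h\in\Omega:m_h=+\infty\}$; for $n$ close to $m$ the coordinates outside $S$ have reached their limits, so $l_\nu(n)$ is affine in the unbounded variables $(n_h)_{h\in S}$ with non-negative coefficients. If one of those coefficients is positive then $l_\nu(n)\to+\infty$ and the $\nu$-term eventually contributes $0$ to the coefficient of $q^{k}$; otherwise $l_\nu(n)$ is eventually constant and the $\nu$-term contributes an eventually constant amount. Since there are finitely many $\nu$, the coefficient of $q^{k}$ in $A_{Q_{n},\alpha}(q)$ is eventually constant, which is exactly convergence in $\mathbb N[[q]]$. (When $m\in\mathbb N^{\Omega}$ the claim is vacuous, $n$ being eventually equal to $m$.)

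The only genuinely non-formal step is the first one: putting Hua's formula in the right form and checking that the entire dependence on $n$ collapses into the single monomial $q^{l_\nu(n)}$ with $l_\nu$ linear and non-negative; after that the convergence is the soft dichotomy just described. The remaining point to be careful about is the clash of topologies --- $\mathrm{Log}\,\mathbb A_{Q_{n}}$ is naturally an element of $\mathbb Q((q^{-1}))[[z]]$ while the statement lives in the $q$-adic ring $\mathbb N[[q]]$ --- which is reconciled only after passing to individual $z^{\alpha}$-coefficients, where each side is an honest element of $\mathbb Q(q)$ and the expansion point $q=0$ is the relevant one.
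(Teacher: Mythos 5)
The statement is quoted from Hennecart and is not proved in this paper, so the relevant comparison is with the strategy the introduction attributes to him: stabilization read off from Hua's formula. Your overall plan (extract the $z^{\alpha}$-coefficient, observe that all dependence on $\underline{n}$ sits in monomials $q^{l_{\nu}(\underline{n})}$ with $l_{\nu}$ affine in $\underline{n}$ with non-negative coefficients, expand at $q=0$, and run the finite dichotomy over the unbounded coordinates) is exactly that strategy, and the ``soft'' part of it is fine. The genuine gap is your starting identity: Hua's formula is \emph{not} the plethystic logarithm of the naive stack count $\mathbb A_{Q}(z,q)=\sum_{\beta}q^{\sum_{h}\beta_{t(h)}\beta_{s(h)}}\prod_{i}|GL_{\beta_i}(\mathbb F_q)|^{-1}z^{\beta}$. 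Already for the quiver with one vertex and no arrows this fails: there $A_1(q)=1$ and $A_{\beta}(q)=0$ for $\beta\geq 2$, so $\mathrm{Exp}\bigl(\sum_{\beta}A_{\beta}(q)z^{\beta}/(q-1)\bigr)=\mathrm{Exp}\bigl(z/(q-1)\bigr)$ has $z^{2}$-coefficient $q/\bigl((q-1)(q^{2}-1)\bigr)$, whereas $1/|GL_2(\mathbb F_q)|=1/\bigl(q(q-1)(q^{2}-1)\bigr)$; equivalently, extracting the $z^{2}$-coefficient of your identity would yield $A_2(q)=-1/q$. The correct form of Hua's formula (Theorem 6.5 of this paper, i.e.\ Hua's Theorem 4.6) is the sum over double partitions, with terms $q^{-\sum_k\langle d^{i,k},d^{i,k}\rangle}/\prod_k\phi_{d^{i,k}-d^{i,k+1}}(q^{-1})$; the naive count of $Rep(Q,\beta)/GL_{\beta}$ does not compute Kac polynomials, so the ``only genuinely non-formal step'' of your plan is the one that is false as written.

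The error is, however, localized and repairable. In the correct formula the multiplicities enter only through $-\langle d,d\rangle_{Q_{\underline{n}}}=-\sum_{i}d_i^{2}+\sum_{h\in\Omega}n_h\,d_{s(h)}d_{t(h)}$, while the $\phi$-factors and the remaining exponents are independent of $\underline{n}$; hence the $z^{\alpha}$-coefficient is again a finite sum $\sum_{\nu}q^{l_{\nu}(\underline{n})}P_{\nu}(q)/Q_{\nu}(q)$ with $l_{\nu}$ affine in $\underline{n}$ with non-negative coefficients and $P_{\nu}/Q_{\nu}$ independent of $\underline{n}$ (also for divisible $\alpha$, where the Adams operations merely rescale exponents by $l$). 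With that substitution, your Laurent-expansion-at-$q=0$ argument and the dichotomy over the coordinates tending to $+\infty$ go through verbatim and do give the stated convergence in $\mathbb N[[q]]$. So the convergence mechanism is sound; what must be replaced is the generating series you feed into $\mathrm{Log}$.
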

\begin{thm}
\cite[Theorem 3.7]{hennecart2021asymptotic} Let $m\in(\mathbb N\cup\{+\infty\})^{\omega}$, suppose $\alpha\in \mathbb N^I$, then the sequence of polynomials 
$$q^{\deg A_{Q_n,d}(q)}A_{Q_n,d}(q^{-1})$$
converges in $\mathbb N[[q]]$ as $n\to m$. In other words, for any integer $k$, the coefficient in $A_{Q_{n,d}}(q)$ of $q^{\deg A_{Q_n,d}(q)},...,q^{\deg A_{Q_n,d}-k}$ stabilizes when $n$ is sufficiently close to $m$.
\end{thm}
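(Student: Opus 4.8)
The plan is to read off both the convergence and the limiting series from Hua's counting formula \cite{hua2000counting} for Kac polynomials (recalled in Section~6), by isolating inside the plethystic logarithm the finitely many terms that govern the \emph{top} coefficients of $A_{Q_n,\alpha}$; this is the mirror of the argument behind Theorem~2.13, which governs the bottom coefficients. (For indivisible $\alpha$ one could instead try to argue geometrically through Theorem~2.11, reducing to the stabilization of the low-degree Betti numbers of the quiver varieties $\mathcal M_\alpha^\lambda$ as edges are added, but Hua's formula handles all $\alpha$ at once.)

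Write $\langle\cdot,\cdot\rangle_n$ for the Euler form of $Q_n$ and set $e_n(\beta)=-\langle\beta,\beta\rangle_n=\sum_{h\in\Omega}n_h\beta_{s(h)}\beta_{t(h)}-\sum_{i\in I}\beta_i^2$. By Theorem~2.10, $\deg A_{Q_n,\beta}=1+e_n(\beta)$, so the object to study is the reversed polynomial $R_{n,\beta}(q):=q^{1+e_n(\beta)}A_{Q_n,\beta}(q^{-1})$, and we must show that each coefficient of $R_{n,\alpha}$ stabilizes as $n\to m$. Hua's formula, with $\mathrm{Exp}$ the plethystic exponential, reads
\begin{gather*}
\sum_{\beta\in\mathbb N^I}c_\beta(q)\,x^\beta=\mathrm{Exp}\!\left(\sum_{\beta\neq 0}\frac{A_{Q_n,\beta}(q)}{q-1}\,x^\beta\right),\\
c_\beta(q)=\frac{q^{-\langle\beta,\beta\rangle_n}}{\prod_{i\in I}\prod_{k=1}^{\beta_i}(1-q^{-k})}=q^{e_n(\beta)}G_\beta(q^{-1}),
\end{gather*}
where $G_\beta(t):=\prod_{i\in I}\prod_{k=1}^{\beta_i}(1-t^k)^{-1}\in 1+t\,\mathbb N[[t]]$ does \emph{not} depend on $n$. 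The $x^\alpha$-coefficient of the plethystic logarithm of the left side is $A_{Q_n,\alpha}(q)/(q-1)$; expanding it through the Adams operations $\psi_r$ ($q\mapsto q^r$, $x^\beta\mapsto x^{r\beta}$) and the ordinary logarithm, it becomes a fixed $\mathbb Q$-linear combination of products $\prod_{j=1}^\ell c_{\beta^{(j)}}(q^r)$ indexed by integers $r\mid\alpha$ and decompositions $\alpha/r=\beta^{(1)}+\dots+\beta^{(\ell)}$ into nonzero vectors, and each such product equals $q^{r\sum_j e_n(\beta^{(j)})}$ times a power series in $q^{-1}$ whose coefficients do not depend on $n$ (for $n$ agreeing with $m$ at the finite coordinates of $m$).

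It remains to see which terms can influence a prescribed coefficient of $R_{n,\alpha}$. A short computation gives
\[
e_n(\alpha)-\sum_{j}e_n(\beta^{(j)})=\sum_{h\in\Omega}n_h\Big(\sum_{j\neq j'}\beta^{(j)}_{s(h)}\beta^{(j')}_{t(h)}\Big)-\sum_{i\in I}\sum_{j\neq j'}\beta^{(j)}_i\beta^{(j')}_i,
\]
a difference of nonnegative quantities. Since $A_{Q_n,\alpha}\neq 0$ forces the support of $\alpha$ to be connected, a nontrivial decomposition is either disconnected across every edge $h$ with $m_h=\infty$ — then its pieces have disjoint vertex support, the second sum vanishes, and the corresponding term sits a fixed distance strictly below degree $1+e_n(\alpha)$ while contributing an $n$-independent power series — or it interacts across such an edge, and then its leading degree drops to $-\infty$ relative to $1+e_n(\alpha)$ as $n\to m$; the Adams contributions with $r\ge2$ have leading degree at most $e_n(\alpha)/2$ and fall away likewise, unless $e_n(\alpha)$ stays bounded, in which case $Q_n$ is eventually constant and nothing is to prove. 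Therefore, for each $N$, once $n$ is close enough to $m$, the coefficients of $q^0,\dots,q^N$ in $R_{n,\alpha}$ are given by a fixed expression in the first few coefficients of finitely many $G_\beta$'s and the combinatorial constants of $\mathrm{Log}$; in particular the top coefficient of $A_{Q_n,\alpha}$ is eventually $1$, and when $m\equiv\infty$ the limit is $(1-q)\,G_\alpha(q)=\dfrac{1-q}{\prod_{i\in I}\prod_{k=1}^{\alpha_i}(1-q^k)}$. Since each $R_{n,\alpha}$ has nonnegative integer coefficients \cite{hausel2013positivity}, the coefficientwise limit does too, hence lies in $\mathbb N[[q]]$, which is the assertion.

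The main obstacle is precisely this last bookkeeping: cleanly classifying which decompositions of $\alpha$ survive the limit when some multiplicities $m_h$ stay finite while others tend to infinity, and checking that the surviving terms each contribute a power series with $n$-independent coefficients so that no $n$-dependent cancellation can occur among them. Uniform control of the higher Adams/M\"obius terms is routine by comparison with the $r=1$ terms, but must still be carried out.
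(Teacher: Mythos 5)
First, note that the paper does not prove this statement at all: it is quoted background, cited to Hennecart's Theorem 3.7, so there is no internal proof to compare against; your proposal has to stand on its own. Its overall strategy --- reading the top coefficients off Hua's formula and showing that all but finitely many terms drop in codegree as $n\to m$ --- is indeed the right one, and it is in the same spirit as Hennecart's argument and as the bookkeeping the paper itself carries out in Section 6 for a related purpose.

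However, the identity you take as your starting point is not Hua's formula, and this is a genuine gap. The $x^\beta$-coefficient of $\mathrm{Exp}\bigl(\sum_{\beta\neq 0}A_{Q_n,\beta}(q)x^\beta/(q-1)\bigr)$ is not the single term $c_\beta(q)=q^{-\langle\beta,\beta\rangle_n}/\prod_{i\in I}\prod_{k=1}^{\beta_i}(1-q^{-k})$: it is a sum over $I$-tuples of partitions of $\beta$, equivalently over the ordered decompositions $\beta=d^{1}+\dots+d^{s}$, $d^{1}\geq\dots\geq d^{s}$, appearing in the inner bracket of Theorem 6.5; your $c_\beta$ is only the $s=1$ term. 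Already the quiver with one vertex and no arrows shows the discrepancy at $\beta=2$: with $A_1=1$, $A_2=0$, the Exp side has $x^2$-coefficient $\tfrac{q}{(q-1)^2(q+1)}$, whereas $c_2(q)=\tfrac{1}{q(q-1)^2(q+1)}$; the missing summand is exactly the $s=2$ term for $2=1+1$. This is not a harmless truncation for the theorem at hand: a dropped term for $\beta=d^1+\dots+d^s$ has top degree $-\sum_k\langle d^k,d^k\rangle_n$, so its codegree below $-\langle\alpha,\alpha\rangle_n$ is $-\sum_{k\neq k'}\langle d^k,d^{k'}\rangle_n$, and this stays \emph{bounded} whenever the edges meeting the support of the smaller parts all have finite multiplicity in $m$. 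Precisely in the general case the statement covers (some $m_h$ finite), these terms therefore contribute to the limit of the reversed polynomial, so your classification of surviving terms --- and the limit it would produce --- misses contributions; only when $m\equiv\infty$ do they all fall away, which is why your claimed limit $(1-q)/\phi_\alpha(q)$ is correct in that special case. To repair the argument, run your degree bookkeeping on the full formula of Theorem 6.5, tracking for each double decomposition the linear form $-\sum\langle d^{i,k},d^{i,k'}\rangle_n$ in $\underline n$ (this is what Section 6 of the paper, and Hennecart, actually do); the limiting argument you sketch, including the uniform control of the Adams terms $r\geq 2$, then goes through.
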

\begin{rem}
    For the second approach, see \cite{zveryk2025stabilisation}.
\end{rem}
\begin{rem}
An explicit formula of the Kac's polynomial for all quivers (including those with loops) was provided in \cite[Theorem 4.6]{hua2000counting}. The proof applied Burnside's lemma to the action of the finite group $GL_{\alpha}(\mathbb F_q)$ on the moduli space of quiver representations over $\mathbb F_q$. This will be explained in more detail in section 7.
\end{rem}
\section{Exact Sequence and Lifting}
\hspace{-12pt}Suppose $F^*:0=F^0\subset F^1\subset...\subset F^s$ is a flag of type $d^*$ in the $I$-graded vector space $k^{\alpha}$. Reineke defined in [Rei03] a closed subvariety $Z$ of $R_d$, as the representations in $Rep(Q,\alpha)$ compatible with $F^*$.
\begin{defn} (Also in the proof of [Rei03], Proposition 3.4) We say that a representation $x\in Rep(Q,\alpha)$ is compatible with $F^*$ if for every $1\leq k\leq s$ and edge $\beta:i\to j$ in the quiver, we have 
$$x_{\beta}(F_i^k)\subseteq F_j^k$$
We define $Z$ as the subset of $Rep(Q,\alpha)$ containing all representations that are compatible with $F^*$
\end{defn}
\begin{defn} Similarly, we say that an element $y$ in $End(\alpha)=\displaystyle \prod_{i\in Q}End_{\alpha_i}(k)$ is compatible with $F^*$ if, for all $i\in I$,
$$y_i(F_i^k)\subseteq F_i^k$$
Define $End(\alpha)_{F^*}$ as the subset of representations compatible with $F^*$.
\end{defn}
\begin{defn} Suppose $x\in Rep(Q,\alpha)$, we define $End(x)\subseteq End(\alpha)$ as the vector space of endomorphisms of the representation $x$, and let
$$End(x)_{F^*}=End(x)\cap End(\alpha)_{F^*}$$
\end{defn}
\begin{defn} Let $\mu^{-1}(0)_{F^*}$ be the subvariety of $\mu^{-1}(0)$ consisting of elements that are compatible with $F^*$, considered as a representation in $Rep(\overline{Q},\alpha)$.
\end{defn}
\begin{defn} Suppose $i,j\in I$, and $\beta:i\to j\in\Omega$ we define 
$$Hom(V_i,V_j)_{F^*}\subseteq Hom(V_i,V_j)$$
to be the set of linear maps $f$ in $Hom(V_i,V_j)$ such that for all $1\leq k\leq s$ we have 
$$f(F_i^k)\subseteq F_j^k$$
\end{defn}
\hspace{-12pt}Following [CB01], we have an obvious restriction map 
$$\pi:\mu^{-1}(0)_{F^*}\to Z$$
given by restricting the projection map $Rep(Q,\alpha)\oplus Rep(\overline{Q},\alpha)\to Rep(Q,\alpha)$ to $\mu^{-1}(0)_{F^*}$.
\\~\\ Suppose $x\in Z$, we would like to determine its preimage under $\pi$. We define $End(\alpha)^{T}_{F^*},End(x)_{F^*}^T$and $Z^T$ such that they consist of the transpose of the matrices in the respective space. In particular, $Z^T\subseteq Rep(\overline{Q},\alpha)$, $End(\alpha)^T_{F^*},End(x)_{F^*}^T\subseteq End(\alpha)$. Let $Z^{op},(Z^{op})^T$ be the corresponding definition in the opposite quiver.
\begin{prop} The sequence 
$$Rep(Q^{op},\alpha)_{F^*}\overset{f_1}{\to} End(\alpha)_{F^*}\overset{f_2}{\to} End(x)_{F^*}^{*}\to 0$$
is exact. Here $f_1(\bullet)=\mu(\bullet,x)$, and $f_2(\theta)(\bullet)=\sum_itr(\theta_i\cdot\bullet)$
\end{prop}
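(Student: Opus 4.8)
The plan is to imitate the argument behind the flag-free exact sequence of \cite[Section~3]{crawley2001geometry}, namely $Rep(Q^{op},\alpha)\xrightarrow{\mu_x}\mathfrak g_\alpha\to End(x)^*\to 0$, which is obtained by identifying $\mathfrak g_\alpha^*\cong\mathfrak g_\alpha$ via the trace form $(\theta,\psi)\mapsto\sum_i tr(\theta_i\psi_i)$ and noting that $\mu_x=\mu(-,x)$ is adjoint to the infinitesimal action map, with every step now carried out inside the parabolic $End(\alpha)_{F^*}$. First I would check that $f_1$ is well defined: for an edge $h\colon i\to j$ in $\Omega$, if $x\in Z$ and $y\in Rep(Q^{op},\alpha)_{F^*}$ then $x_h$ and $y_{\bar h}$ each preserve the relevant steps of $F^*$, hence so do the blocks $x_hy_{\bar h}$ and $y_{\bar h}x_h$ of $\mu(y,x)$, so $\mu(y,x)\in End(\alpha)_{F^*}$. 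Then $f_2\circ f_1=0$ follows from the moment-map identity
$$\sum_i tr\bigl(\mu(y,x)_i\,\phi_i\bigr)=\sum_{h\in\Omega} tr\bigl(y_{\bar h}\,(\phi\cdot x)_h\bigr),\qquad (\phi\cdot x)_h:=\phi_{s(h)}x_h-x_h\phi_{t(h)},$$
valid for all $\phi\in\mathfrak g_\alpha$ (a routine computation): when $\phi\in End(x)_{F^*}$ one has $\phi\cdot x=0$, so the right-hand side vanishes and $\operatorname{im}f_1\subseteq\ker f_2$.

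For exactness at $End(\alpha)_{F^*}$ together with surjectivity of $f_2$, I would bring in the restricted infinitesimal action $a\colon End(\alpha)_{F^*}\to Z$, $a(\phi)=\phi\cdot x$, which is well defined by the same block argument and has $\ker a=End(x)\cap End(\alpha)_{F^*}=End(x)_{F^*}$. The identity above says precisely that $f_1$ and $a$ are mutually adjoint with respect to the trace pairing on $End(\alpha)_{F^*}$ and the natural pairing $\langle y,x'\rangle=\sum_{h\in\Omega}tr(y_{\bar h}x'_h)$ between $Rep(Q^{op},\alpha)$ and $Rep(Q,\alpha)$. Passing to annihilators, this should identify $\operatorname{im}f_1$ with $\{\theta\in End(\alpha)_{F^*}:\sum_i tr(\theta_i\phi_i)=0\text{ for all }\phi\in End(x)_{F^*}\}=\ker f_2$, which gives exactness at the middle, and at the same time realizes $f_2$ as the identification $\operatorname{coker}f_1=End(\alpha)_{F^*}/\operatorname{im}f_1\cong End(x)_{F^*}^*$; this is exactly what is needed afterwards, since $\pi^{-1}(x)=\ker f_1$, to read off the dimension of the fibres of $\pi$.

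The step I expect to be the main obstacle is this "passing to annihilators". In contrast with \cite{crawley2001geometry}, the trace form is degenerate on $End(\alpha)_{F^*}$ — its radical is the nilradical of the parabolic, i.e. the strictly flag-decreasing endomorphisms — and likewise $\langle\,,\,\rangle$ degenerates on $Rep(Q^{op},\alpha)_{F^*}\times Z$, so one cannot quote non-degeneracy directly. I would instead run the adjointness computation inside the ambient non-degenerate pairings on $\mathfrak g_\alpha$ and on $Rep(Q^{op},\alpha)\times Rep(Q,\alpha)$, working out explicitly the orthogonal complements of $Rep(Q^{op},\alpha)_{F^*}$ and of $End(\alpha)_{F^*}$ (the former being the representations that strictly decrease $F^*$, the latter the nilradical), and then deduce the assertion either by a diagram chase against the flag-free sequence of \cite[Section~3]{crawley2001geometry} or by induction on the length $s$ of $F^*$, peeling off $F^1$ at each stage. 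Controlling the interaction between "preserving $F^*$" and "infinitesimally moving $x$ within $F^*$" — equivalently, pinning down how $End(x)_{F^*}$ sits relative to the nilradical — is where the genuine work lies; the remaining assertions are formal.
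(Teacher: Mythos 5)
Your preliminary steps (well-definedness of $f_1$, and $f_2\circ f_1=0$ via the adjointness of $\mu(\cdot,x)$ and the infinitesimal action) are fine, but the step you yourself flag as the main obstacle is precisely the content of the proposition, and your proposal does not supply it. The paper's proof resolves the degeneracy problem by a specific device that never appears in your plan: the dual flag $D(F^*)$, spanned by the complementary basis vectors, together with the observation that the trace pairing $Hom(V_i,V_j)_{D(F^*)}\times Hom(V_j,V_i)_{F^*}\to\mathbb C$ is \emph{perfect}. One then writes down the tautologically exact action sequence $0\to End(x)_{D(F^*)}\to End(\alpha)_{D(F^*)}\to Rep(Q,\alpha)_{D(F^*)}$ for the opposite parabolic (as in \cite{crawley1998noncommutative}) and dualizes it, using the perfect pairings to identify $(Rep(Q,\alpha)_{D(F^*)})^*\cong Rep(Q^{op},\alpha)_{F^*}$ and $End(\alpha)_{D(F^*)}^*\cong End(\alpha)_{F^*}$; the cokernel of $f_1$ is thereby identified with the dual of $End(x)_{D(F^*)}$, and it is this $D(F^*)$-space that the paper actually uses afterwards (Corollary 3.7 stratifies by $\dim End(x)_{D(F^*)}^*$). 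Your substitute suggestions --- a diagram chase against the flag-free sequence of \cite{crawley2001geometry}, or induction on the length of $F^*$ --- are left entirely unexecuted, so the genuine work is missing.

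Worse, the same-flag version you are steering toward (exactness with third term $End(x)_{F^*}^*$ and $f_2$ the trace pairing against $End(x)_{F^*}$ itself) is false in general, so no amount of annihilator bookkeeping inside the ambient nondegenerate forms can establish it. Take $x=0$, which is compatible with any flag, and let $F^*$ be a proper flag: then $f_1=\mu(\cdot,0)=0$ and $End(x)_{F^*}=End(\alpha)_{F^*}$, so exactness would force $\theta\mapsto\sum_i tr(\theta_i\cdot)$ to be an isomorphism from $End(\alpha)_{F^*}$ onto its own dual; but its kernel is exactly the strictly flag-decreasing part (the nilradical you mention), which is nonzero. By contrast the cokernel of $f_1$ in this example is canonically $(End(\alpha)_{D(F^*)})^*=End(x)_{D(F^*)}^*$, consistent with the dual-flag formulation that the paper's proof (if not its literal wording) establishes. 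So the gap is not a deferred verification: the missing idea is the opposite flag and the resulting perfect pairing, and with it the correct identification of the third term of the sequence.
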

\begin{proof}
\fontfamily{cmr}\selectfont
We define a dual flag $D(F^*)$ of $F^*$. For all $i\in I$, we fix a basis 
$$v_1,...,v_{\alpha_i}$$
of $V_i$. This is chosen such that for all $1\leq k\leq s$ (recall that $s$ is the length of our flag), 
$$\{v_1,...,v_{\dim F_i^k}\}$$
is a basis of $F_i^k$. Now we define $D(F^*)$ by
$$D(F^*)^k_{i}=\operatorname{span}\{v_{\dim F_{i}^{s-k}+1},...,v_{\dim V_i}\}$$
We consider the sequence 
$$0\to End(x)_{D(F^*)}\overset{g_1}{\to} End(\alpha)_{D(F^*)}\overset{g_2}{\to} Rep(Q,\alpha)_{D(F^*)}$$
as in [CH98]. The map $g_1$ is just inclusion, and the $g_2$ is given by 
$(\theta_i)_i\mapsto (\theta_{h(\alpha)}x_a-x_a\theta_{t(a)})_a$. It is tautologically exact. Taking the dual sequence, we have 
$$(Rep(Q(\alpha)_{D(F^*)})^*\overset{g_2^*}{\to}End(\alpha)_{D(F^*)}^{*}\overset{g_1^*}{\to}End(x)_{D(F^*)}\to 0$$
We now notice that if $\beta:i\to j\in\Omega$ then
$$\langle\bullet,\bullet\rangle:Hom(V_i,V_j)_{D(F^*)}\times Hom(V_j,V_i)_{F^*}\to \mathbb C,\hspace{20pt}\langle\theta,\phi\rangle= tr(\theta\phi)$$
is a pairing. This is because if $v_1^i,...,v_{\dim V_i}^i$ and $v_1^j,...,v_{\dim V_j}^j$ is the basis chosen for $V_i,V_j$, then $Hom(V_i,V_j)_{D(F^*)}$ is spanned by $e_{kl}:V_i\to V_j,k\leq l$, where $e_{kl}$ sends $v_k^i$ to $v_l^j$ and vanishes on the complement. Meanwhile, $Hom(V_j,V_i)_{F^*}$ is spanned by $f_{lk}:V_j\to V_i$ where $l\geq k$, here $v_{lk}$ sends $v_l^j$ to $v_k^i$. As 
$$\langle e_{pq},f_{rs}\rangle=\delta_{ps}\delta_{qr}$$
This implies that the pairing is nondegenerate, and hence
$$(Rep(Q,\alpha)_{D(F^*)})^*\cong Rep(Q^{op},\alpha)_{F^*}$$
and similarly $$End(x)_{D(F^*)}^*\cong End(x)_{F^*}$$
This gives the exact sequence as claimed; the maps also agree with what we have asserted by the result of [CH98].
\end{proof}
\begin{cor} For all $i\geq 0$ define
$$Rep(Q,\alpha)_{F^*}^i=\{x\in Rep(Q,\alpha)_{F^*}:\dim End(x)_{D(F^*)}^*=i\}$$
also define $C=\dim Rep(Q^{op},\alpha)_{F^*}-\dim End(\alpha)_{F^*}$, then 
$$\dim \pi^{-1}(Rep(Q^{op},\alpha)_{F^*})=C+\max(i+\dim Rep(Q,\alpha)_{F^*}^i)$$
\end{cor}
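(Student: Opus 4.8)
\medskip
\noindent\textbf{Proof plan.} The plan is to apply Proposition 3.7 fibrewise: for each $x\in Z=Rep(Q,\alpha)_{F^*}$ the fibre $\pi^{-1}(x)$ is a linear subspace of $Rep(Q^{op},\alpha)_{F^*}$ whose dimension is $C$ plus a semicontinuous invariant of $x$, and then to decompose $\pi$ over the loci on which that invariant is constant.

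First I would pin down the fibre over a fixed $x\in Z$. A point of $\mu^{-1}(0)_{F^*}$ lying over $x$ is a $y\in Rep(Q^{op},\alpha)_{F^*}$ with $\mu(y,x)=0$. The key observation is that when $x$ and $y$ are both compatible with $F^*$, each summand $x_hy_{\overline h}$ and $y_{\overline h}x_h$ of $\mu(y,x)$ already preserves $F^*$, so $\mu(y,x)$ automatically lies in $End(\alpha)_{F^*}$ and the condition $\mu(y,x)=0$ is exactly $f_1(y)=0$, where $f_1=\mu(\,\cdot\,,x)$ is the linear map of Proposition 3.7. Hence $\pi^{-1}(x)=\ker f_1$ is a linear subspace of $Rep(Q^{op},\alpha)_{F^*}$, and the matrix of $f_1$ depends linearly on the coordinates of $x$.

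Next I would read off dimensions. By exactness of the sequence in Proposition 3.7 one has $\operatorname{rank}f_1=\dim End(\alpha)_{F^*}-\dim End(x)^*_{F^*}$, so
$$\dim\pi^{-1}(x)=\dim\ker f_1=\dim Rep(Q^{op},\alpha)_{F^*}-\dim End(\alpha)_{F^*}+\dim End(x)^*_{F^*}=C+\dim End(x)^*_{F^*}.$$
Since $\dim End(x)^*_{F^*}=\dim End(x)_{F^*}$ and, by the isomorphism $End(x)^*_{D(F^*)}\cong End(x)_{F^*}$ established in the proof of Proposition 3.7, also $\dim End(x)_{F^*}=\dim End(x)^*_{D(F^*)}$, the point $x$ lies in $Rep(Q,\alpha)^i_{F^*}$ precisely when $\dim\pi^{-1}(x)=C+i$. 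The function $x\mapsto\dim End(x)_{F^*}=\operatorname{corank}f_1$ is upper semicontinuous on $Z$, so the strata $Rep(Q,\alpha)^i_{F^*}$ are locally closed and only finitely many are nonempty.

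Finally I would assemble the pieces: $\mu^{-1}(0)_{F^*}=\pi^{-1}(Z)=\bigsqcup_i\pi^{-1}\!\bigl(Rep(Q,\alpha)^i_{F^*}\bigr)$. Over $Rep(Q,\alpha)^i_{F^*}$ the rank of $f_1$ is constant, equal to $\dim End(\alpha)_{F^*}-i$, so the kernels $\ker f_1$ form a sub-bundle of rank $C+i$ inside the trivial bundle with fibre $Rep(Q^{op},\alpha)_{F^*}$; this sub-bundle is exactly $\pi^{-1}\!\bigl(Rep(Q,\alpha)^i_{F^*}\bigr)$, whence $\dim\pi^{-1}\!\bigl(Rep(Q,\alpha)^i_{F^*}\bigr)=\dim Rep(Q,\alpha)^i_{F^*}+C+i$. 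Taking the maximum of the dimensions of the finitely many strata gives $\dim\pi^{-1}(Z)=C+\max_i\bigl(i+\dim Rep(Q,\alpha)^i_{F^*}\bigr)$, as asserted. The hard part is the rigour of this last step: for a morphism all of whose fibres over a base have the same dimension $d$, the total space need not have dimension $\dim(\text{base})+d$ unless one knows flatness (or local triviality), so I would want to argue carefully that on the constant-rank locus $Rep(Q,\alpha)^i_{F^*}$ the family of kernels of $f_1$ really is a vector sub-bundle --- the standard ``kernel of a constant-rank map is a subbundle'' statement applied to the linear-in-$x$ family $f_1$. Given that, and the bookkeeping translating $End(x)_{F^*}$ to $End(x)^*_{D(F^*)}$ via Proposition 3.7, the rest is routine.
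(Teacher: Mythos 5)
Your proposal is correct and follows essentially the same route as the paper: use the exact sequence (Proposition 3.6) fibrewise to get $\dim\pi^{-1}(x)=\dim\ker f_1=C+i$ for $x\in Rep(Q,\alpha)^i_{F^*}$, note there are finitely many such strata, conclude $\dim\pi^{-1}(Rep(Q,\alpha)^i_{F^*})=C+i+\dim Rep(Q,\alpha)^i_{F^*}$, and take the maximum over $i$. The only difference is in how that middle step is justified: the paper appeals to the argument of \cite[Lemma 3.4]{crawley2001geometry}, whereas you observe that on the constant-corank locus the kernels of the $x$-linear family $f_1$ form a vector subbundle of the trivial bundle with fibre $Rep(Q^{op},\alpha)_{F^*}$ --- a valid, self-contained substitute for the same counting.
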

Recall that $\pi:\mu^{-1}(0)_{F^*}\to Z$ is the restriction map given by restricting $Rep(Q,\alpha)\oplus Rep(\overline{Q},\alpha)\to Rep(Q,\alpha)$ to $\mu^{-1}(0)_{F^*}$
\begin{proof}
\fontfamily{cmr}\selectfont
Indeed, by Proposition 3.6, if $x\in Rep(Q,\alpha)_{F^*}^i$ then \begin{align*}
    \dim\pi^{-1}(x)&=\dim \operatorname{ker}f_1\\
    &=\dim Rep(Q^{op},\alpha)_{F^*}-\dim End(\alpha)_{F^*}+\dim End(x)^*_{D(f)}\\
    &=\dim Rep(Q^{op},\alpha)_{F^*}-\dim End(\alpha)_{F^*}+i\\
    &=C+i
\end{align*}
and hence by the same argument as \cite[Lemma 3.4]{crawley2001geometry},
$$\dim\pi^{-1}(Rep(Q,\alpha)^i_{F^*})=C+i+\dim Rep(Q,\alpha)_{F^*}^i$$
Now since $\pi^{-1}(Rep(Q^{op},\alpha)_{F^*})=\cup_i\pi^{-1}(Rep(Q,\alpha)^i_{F^*})$, we have 
$$\dim\pi^{-1}(Rep(Q^{op},\alpha)_{F^*})=C+\max(i+\dim Rep(Q,\alpha)_{F^*}^i)$$
\end{proof}
\begin{prop} \fontfamily{cmr}\selectfont If $\lambda\in k^I$ then a representation of $Q$ lifts to a representation of $\mu^{-1}(\lambda)$ if and only if the dimension vector $\beta$ of any direct summand satisfies $\lambda\cdot\beta=0$.
\end{prop}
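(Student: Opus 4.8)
\emph{Proof plan.} The guiding observation is that lifting is an affine-linear problem. Fix $x=(x_h)_{h\in\Omega}\in Rep(Q,\alpha)$; to extend it to $\overline x\in Rep(\overline Q,\alpha)$ is to choose $\phi=(x_{\overline h})_{\overline h\in\Omega^{op}}\in Rep(Q^{op},\alpha)$, and then, $x$ being fixed,
$$\mu(\overline x)=\sum_{h\in\Omega}(x_hx_{\overline h}-x_{\overline h}x_h)=L(\phi),\qquad L(\phi)_i=\sum_{h:\,s(h)=i}x_h\phi_{\overline h}-\sum_{h:\,t(h)=i}\phi_{\overline h}x_h,$$
so $L\colon Rep(Q^{op},\alpha)\to\mathfrak g_\alpha$ is a \emph{linear} map. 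Identifying $\lambda\in k^I$ with the central element $(\lambda_i\,\mathrm{id}_{V_i})_i\in\mathfrak g_\alpha$, the set of lifts of $x$ to $\mu^{-1}(\lambda)$ is exactly the fibre $L^{-1}(\lambda)$, which is non-empty precisely when $\lambda\in\mathrm{Im}(L)$. Thus the whole statement reduces to identifying $\mathrm{Im}(L)$.

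Next I would compute $\mathrm{Im}(L)$ by transposing $L$ with respect to the non-degenerate trace pairings on $Rep(Q^{op},\alpha)\times Rep(Q,\alpha)$ and on $\mathfrak g_\alpha$. A one-line cyclicity-of-trace computation gives $\langle L(\phi),\theta\rangle=\langle\phi,(\theta_{s(a)}x_a-x_a\theta_{t(a)})_a\rangle$ for all $\phi,\theta$; in other words the adjoint of $L$ is the standard map $\theta\mapsto(\theta_{s(a)}x_a-x_a\theta_{t(a)})_a$ (the map $g_2$ of Proposition 3.6 for the trivial flag $0\subset V$), whose kernel is $End(x)$. Hence $\mathrm{Im}(L)=End(x)^{\perp}\subseteq\mathfrak g_\alpha$, and the criterion becomes: $x$ lifts to $\mu^{-1}(\lambda)$ if and only if $\sum_{i\in I}\lambda_i\,\mathrm{tr}(\theta_i)=0$ for every $\theta\in End(x)$.

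For necessity of the stated condition, given a direct sum decomposition $V=V'\oplus V''$ of the $Q$-representation $x$, take $e\in End(x)$ to be the idempotent projecting onto $V'$; then $\mathrm{tr}(e_i)=\dim V'_i$, so the criterion forces $\lambda\cdot\dim V'=0$. (One can even avoid the criterion here: plug $e$ directly into $\mathrm{tr}(\mu(\overline x)\,e)=\mathrm{tr}(\lambda e)=\lambda\cdot\dim V'$; the left-hand side vanishes because $e$ commutes with every $x_h$, $h\in\Omega$, so the two sums in $\mathrm{tr}(\mu(\overline x)e)$ cancel by cyclicity of trace, even though $e$ need not be an endomorphism of $\overline x$.)

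For sufficiency, assume $\lambda\cdot\dim V'=0$ for every direct summand $V'$ of $x$, and let $\theta\in End(x)$ be arbitrary; I must show $\sum_i\lambda_i\,\mathrm{tr}(\theta_i)=0$. Decompose $x$ into the generalized eigenspaces of $\theta$ (working over $\overline k$, which in characteristic zero is harmless since the summand condition is Galois-stable and passes to $x\otimes\overline k$): $x\otimes\overline k=\bigoplus_c W_c$. Since $\theta$ commutes with all $x_h$, each $W_c$ is a subrepresentation, and by the primary decomposition it is a direct summand; moreover $\theta$ acts on $W_c$ as $c\cdot\mathrm{id}$ plus a nilpotent, so $\mathrm{tr}(\theta_i|_{(W_c)_i})=c\dim(W_c)_i$ and hence $\sum_i\lambda_i\mathrm{tr}(\theta_i|_{W_c})=c\,(\lambda\cdot\dim W_c)=0$ by hypothesis; summing over $c$ gives $\sum_i\lambda_i\mathrm{tr}(\theta_i)=0$. (Over an arbitrary field the generalized eigenspaces are replaced by the primary components $\ker p_\ell(\theta)^N$ for the irreducible factors $p_\ell$ of the minimal polynomial of $\theta$, and the trace of $\theta$ on each such summand is controlled by the subleading coefficient of $p_\ell$ in the same way.) The step that needs the most care is the transpose identification in the second paragraph, where the heads/tails and the signs must be matched against the pairings — but this is only a restatement of the exactness already established in Proposition 3.6.
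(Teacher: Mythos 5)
Your argument is correct and is essentially the proof the paper invokes by citation: the paper's proof is just ``Same as \cite[Theorem 3.3]{crawley2001geometry}'', and your reconstruction (lifting as a linear problem, identifying the image via the trace pairing as $End(x)^{\perp}$, idempotents for necessity, and the Fitting/generalized-eigenspace decomposition for sufficiency) is exactly the Crawley-Boevey--Holland argument behind that reference. The only genuine addition is your primary-decomposition remark handling non-algebraically-closed $k$, which is a correct and mild strengthening of the cited statement.
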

\begin{proof}
\fontfamily{cmr}\selectfont
Same as \cite[Theorem 3.3]{crawley2001geometry}.

\end{proof}
\section{Dimension Computing}
\hspace{-12pt}Our objective is to compute the dimension of the fibres of 
$$\pi:\mu^{-1}(0)_{F^*}\to Rep(Q,\alpha)_{F^*}$$
In view of Corollary 3.7, we need to compute $\max(i+\dim Rep(Q,\alpha)^i_{F^*})$. To do so, we apply a double counting trick which is also used in \cite{kac1983root}.
Define 
$$W=\{(g,x):x\in Rep(Q,\alpha)_{F^*}:g\in End(x)_{F^*}\}\subset End(\alpha)_{F^*}\times Rep(Q,\alpha)_{F^*}$$
Let $\pi_1:W\to End(\alpha)_{F^*},\pi_2:W\to Rep(Q,\alpha)_{F^*}$ be the corresponding projections. We notice that if $g\in V$ then $\pi_1^{-1}(g)$ is a vector space.
\begin{defn} Suppose $r\geq 0$, define 
$$S_r=\{g\in End(\alpha)_{F^*}:\dim\pi_1^{-1}(g)\geq \dim Rep(Q,\alpha)_{F^*}-r\}$$
also define $s_r=\dim S_r$
\end{defn}
\begin{lem} We have 
$$\dim{\pi}^{-1}(Rep(Q^{op},\alpha)_{F^*})=C+\max_r(\dim Rep(Q,\alpha)_{F^*}+s_r-r)$$
\end{lem}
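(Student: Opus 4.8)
The plan is to compute $\dim W$ twice---once through each of the two projections $\pi_1$ and $\pi_2$---and combine the two answers using Corollary 3.7. Set $R=\dim Rep(Q,\alpha)_{F^*}$ throughout.

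First I would work through $\pi_2$. For $x\in Rep(Q,\alpha)_{F^*}$ the fibre $\pi_2^{-1}(x)$ is the vector space $End(x)_{F^*}$, and by the isomorphism $End(x)_{D(F^*)}^{*}\cong End(x)_{F^*}$ produced in the proof of Proposition 3.6, its dimension equals $\dim End(x)_{D(F^*)}^{*}$; thus it is exactly $i$ on the stratum $Rep(Q,\alpha)_{F^*}^{i}$. Since the restriction of $\pi_2$ to the preimage of $Rep(Q,\alpha)_{F^*}^{i}$ has all fibres of the constant dimension $i$, that preimage has dimension $i+\dim Rep(Q,\alpha)_{F^*}^{i}$, and as $W$ is the finite union of these preimages, $\dim W=\max_i\big(i+\dim Rep(Q,\alpha)_{F^*}^{i}\big)$. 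By Corollary 3.7 the right-hand side equals $\dim\pi^{-1}(Rep(Q^{op},\alpha)_{F^*})-C$, so the lemma reduces to showing $\dim W=\max_r\big(R+s_r-r\big)$.

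Next I would compute $\dim W$ through $\pi_1$. For $k\geq0$ put $E_k=\{g\in End(\alpha)_{F^*}:\dim\pi_1^{-1}(g)=k\}$; each $\pi_1^{-1}(g)$ is a vector space which identifies with a linear subspace of $Rep(Q,\alpha)_{F^*}$, so $0\leq k\leq R$, and the same constant-fibre count gives $\dim\pi_1^{-1}(E_k)=k+\dim E_k$, hence $\dim W=\max_k\big(k+\dim E_k\big)$. Unwinding the definition of $S_r$ we get $S_r=\bigsqcup_{k\geq R-r}E_k$, so $s_r=\max_{k\geq R-r}\dim E_k$; in particular for $r\geq R$ one has $S_r=End(\alpha)_{F^*}$, so $R+s_r-r$ is non-increasing there and $\max_r(R+s_r-r)=\max_{0\leq r\leq R}(R+s_r-r)$. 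Substituting $r=R-k$, the desired right-hand side becomes $\max_{0\leq k\leq R}\big(k+\max_{j\geq k}\dim E_j\big)$. Finally I would invoke the elementary identity $\max_k\big(k+\max_{j\geq k}\dim E_j\big)=\max_k\big(k+\dim E_k\big)$: the inequality ``$\geq$'' is termwise, since $\dim E_k\leq\max_{j\geq k}\dim E_j$, while for ``$\leq$'' one chooses, for each $k$, an index $j^{*}\geq k$ with $\dim E_{j^{*}}=\max_{j\geq k}\dim E_j$ and notes $k+\dim E_{j^{*}}\leq j^{*}+\dim E_{j^{*}}$. This gives $\dim W=\max_r(R+s_r-r)$ and finishes the proof.

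The argument is almost entirely formal. The one piece of genuine content is the standard dimension bookkeeping---``a morphism all of whose fibres have dimension $d$ over a (constructible) base $B$ has source of dimension $d+\dim B$, and the dimension of a finite union is the maximum of the dimensions''---which is precisely the input already used to establish Corollary 3.7, following \cite[Lemma 3.4]{crawley2001geometry}. The only thing to be careful about is that the reindexing $k\leftrightarrow R-r$ really sets up an exact correspondence between the fibre-dimension strata $E_k$ of $\pi_1$ and the subsets $S_r$, so that the two maxima range over the same data.
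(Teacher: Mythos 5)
Your proposal is correct and follows essentially the same route as the paper: both compute $\dim W$ twice, once through $\pi_2$ (where Corollary 3.7 and the identification $End(x)_{D(F^*)}^{*}\cong End(x)_{F^*}$ give $\dim W=\max_i\bigl(i+\dim Rep(Q,\alpha)_{F^*}^{i}\bigr)$) and once through $\pi_1$ (yielding $\max_r\bigl(\dim Rep(Q,\alpha)_{F^*}+s_r-r\bigr)$). The only difference is that you spell out explicitly, via the strata $E_k$ and the reindexing $r=R-k$, the step equating the $\pi_1$-count with $\max_r(R+s_r-r)$, which the paper simply asserts.
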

\begin{proof}
We notice that by Corollary 3.3, and the fact that $ Rep(Q,\alpha)^i_{F^*}=\{x\in Rep(Q,\alpha)_{F^*}:\dim\pi_2^{-1}(x)=i\}$
\begin{align*}
    \dim\pi^{-1}(Rep(Q^{op},\alpha)_{F^*})&=C+\max_i(i+\dim Rep(Q,\alpha)_{F^*}^i)\\
    &=C+\dim W\\
    &=C+\max_r(\dim Rep(Q,\alpha)_{F^*}+s_r-r)
\end{align*} 
\end{proof}
\hspace{-12pt}As in \cite{hennecart2021asymptotic}, we let the number of edges in the quiver tend to infinity. That is, let $\Omega$ be the edge set of the quiver and fix $\underline{n}\in \mathbb N^{\Omega}$. Consider the quiver $Q_{\underline{n}}$ which is the quiver $Q$, but with each arrow $\beta\in \Omega$ replaced by $n_{\beta}$ identical arrows. For $\underline{\alpha},\underline{\beta}\in\mathbb N_{\geq 0}^{\Omega}$, we say that $\underline{\alpha}\geq\underline{\beta}$ if for all $h\in \Omega$, we have $\alpha_h\geq\beta_h$. In the following lemma, for the quiver $Q_{\underline{n}}$, we let $s_r^{\underline{n}}$ be the number $s_r$ defined for the quiver $Q_{\underline{n}}$. Let 
\begin{lem} There exists some $\underline{n_0}\in\mathbb Z^{\Omega}$ such that for all $\underline{n}\geq\underline{n_0}$,
$$s_0^{\underline{n}}=\max_{r}(s_r^{\underline{n}}-r)$$
\end{lem}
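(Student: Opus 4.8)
The plan is to compute the fibres of $\pi_1$ explicitly and then reduce the claim to a stratum-dimension bound that is uniform in $\underline{n}$. First I would note that $End(\alpha)_{F^*}$ and the spaces $Hom(V_i,V_j)_{F^*}$ depend only on $\alpha$ and the flag $F^*$, not on $\underline{n}$, whereas $Rep(Q_{\underline{n}},\alpha)_{F^*}=\prod_{\beta:i\to j\in\Omega}\bigl(Hom(V_i,V_j)_{F^*}\bigr)^{n_\beta}$. For $g\in End(\alpha)_{F^*}$ the fibre $\pi_1^{-1}(g)$ consists of the flag-compatible representations $x$ of $Q_{\underline{n}}$ with $g\in End(x)$, and since this is a condition imposed one arrow at a time it decomposes as
$$\pi_1^{-1}(g)=\prod_{\beta:i\to j\in\Omega}\bigl(H_\beta(g)\bigr)^{n_\beta},\qquad H_\beta(g):=Hom(V_i,V_j)_{F^*}\cap\{f:fg_i=g_jf\}.$$
In particular $\pi_1$ is surjective (the zero representation lies in every fibre), each fibre is a linear subspace of $Rep(Q_{\underline{n}},\alpha)_{F^*}$, and writing $a_{ij}:=\dim Hom(V_i,V_j)_{F^*}$ and $c_\beta(g):=\dim H_\beta(g)\leq a_{ij}$ we obtain
$$\dim Rep(Q_{\underline{n}},\alpha)_{F^*}-\dim\pi_1^{-1}(g)=\sum_{\beta:i\to j\in\Omega}n_\beta\bigl(a_{ij}-c_\beta(g)\bigr)\geq 0.$$

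Next I would stratify $End(\alpha)_{F^*}$ into the finitely many locally closed pieces $U_1,\dots,U_m$ on which the tuple $(c_\beta(g))_{\beta\in\Omega}$ is constant; there are finitely many because $0\leq c_\beta(g)\leq a_{ij}$. Put $u_t=\dim U_t$, and for $g\in U_t$ rewrite the quantity above as $D_t(\underline{n})=\sum_{\beta:i\to j\in\Omega}n_\beta\bigl(a_{ij}-c_\beta^{(t)}\bigr)\geq 0$, a fixed linear form in $\underline{n}$. Since every component of $\underline{n}$ is at least $1$ and every coefficient $a_{ij}-c_\beta^{(t)}$ is $\geq 0$, the form $D_t$ vanishes identically exactly for the single stratum, say $U_{\max}$, on which $c_\beta\equiv a_{ij}$, and this stratum is nonempty because it contains $g=0$. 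As a linear subspace of $Rep(Q_{\underline{n}},\alpha)_{F^*}$ having full dimension must be the whole space, $S_0^{\underline{n}}=\{g:\pi_1^{-1}(g)=Rep(Q_{\underline{n}},\alpha)_{F^*}\}=U_{\max}$, so $s_0^{\underline{n}}=u_{\max}$. A short bookkeeping step then shows
$$\max_r\bigl(s_r^{\underline{n}}-r\bigr)=\max_{1\leq t\leq m}\bigl(u_t-D_t(\underline{n})\bigr).$$
Indeed, $S_r^{\underline{n}}=\bigcup_{t:D_t(\underline{n})\leq r}U_t$, so $s_r^{\underline{n}}=\max\{u_t:D_t(\underline{n})\leq r\}$, whence $s_r^{\underline{n}}-r\leq\max_t(u_t-D_t(\underline{n}))$; conversely, taking $r=D_{t^*}(\underline{n})$ for the maximizing index $t^*$ gives the reverse inequality.

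It remains to prove $u_t-D_t(\underline{n})\leq u_{\max}$ for every $t$ once $\underline{n}$ is large. For $t=\max$ this is an equality. For $t\neq\max$ there is an arrow $\beta_0:i\to j$ of $Q$ with $c_{\beta_0}^{(t)}<a_{ij}$, hence $D_t(\underline{n})\geq n_{\beta_0}$; on the other hand $u_t\leq\dim End(\alpha)_{F^*}=:E$, with $E$ independent of $\underline{n}$, and $u_{\max}\geq 0$, so $u_t-u_{\max}\leq E$. Thus, choosing $\underline{n_0}\in\mathbb{Z}^\Omega$ with every component at least $E$, we get for all $\underline{n}\geq\underline{n_0}$ and all $t\neq\max$ that $D_t(\underline{n})\geq E\geq u_t-u_{\max}$; combined with the displayed identity this yields $\max_r(s_r^{\underline{n}}-r)=u_{\max}=s_0^{\underline{n}}$, which is the assertion.

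I expect the only real difficulty to be organizational: establishing the product decomposition of $\pi_1^{-1}(g)$ and the ensuing linear forms $D_t$, verifying that the stratification by $(c_\beta(g))_\beta$ is finite with well-defined dimensions, and checking $S_0^{\underline{n}}=U_{\max}$. Once these are in place, the decisive observation — that the codimension drop $D_t(\underline{n})$ forced on a non-maximal stratum grows linearly in $\underline{n}$, while the competing dimension $u_t$ is bounded by the $\underline{n}$-independent constant $E$ — finishes the argument immediately.
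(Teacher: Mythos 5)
Your proposal is correct and follows essentially the same route as the paper: $S_0^{\underline{n}}$ (and hence $s_0^{\underline{n}}$) is independent of $\underline{n}$, while for any $g\notin S_0$ the fibre codimension grows at least linearly in the components of $\underline{n}$, so once every component exceeds the $\underline{n}$-independent bound $\dim End(\alpha)_{F^*}$ the maximum of $s_r^{\underline{n}}-r$ is attained at $r=0$. Your explicit arrow-by-arrow product decomposition of $\pi_1^{-1}(g)$ and the stratification by the tuple $(c_\beta(g))_\beta$ is just a more carefully bookkept version of the paper's comparison between $Q_{\underline{n}}$ and $Q_{\underline{1}}$.
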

\begin{proof}
\fontfamily{cmr}\selectfont
Let $\underline{1}\in\mathbb Z^{\Omega}$ be the vector whose component are all $1$. The key observation is that $g\in S_0^{\underline{n}}$ means that $g$ fixes every representation in $Rep(Q_{\underline{n}},\alpha)_{F^*}$, but this holds if and only if $g$ fixes every representation in $Rep(Q_{\underline{1}},\alpha)$. As a result, $S_0^{\underline{1}}=S_0^{\underline{n}}$
for all $\underline{n}\in\mathbb Z^{\Omega}$. Meanwhile, let $n'=\min n_i$, then for all $1\leq r\leq n'-1$,
$$S_{r}^{\underline{1}}\cap S_{rn'-1}^{\underline{n}}=\emptyset$$
In particular, $s_{r}^{\underline{n}}=0$. Let $G=\dim End(\alpha)_{F^*}$, then 
$$\max_{r>0} (s_r^{\underline{n}}-r)\leq G-n'$$ 
Hence it suffices to choose $\underline{n_0}=(G-s_0^{\underline{1}})\underline{1}$.
\end{proof}
\begin{cor} For all $\underline{n}\geq\underline{n_0}$ as above, we have 
$$\dim\pi^{-1}(Rep(Q^{op},\alpha)_{F^*})=C+\dim Rep(Q,\alpha)_{D(F^*)}+s_0$$
\end{cor}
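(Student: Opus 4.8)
The plan is to read Corollary 4.4 off directly from Lemma 4.2 and Lemma 4.3, applied to the quiver $Q_{\underline n}$. Lemma 4.2 gives
$$\dim\pi^{-1}\big(Rep(Q^{op},\alpha)_{F^*}\big)=C+\max_r\Big(\dim Rep(Q,\alpha)_{F^*}+s_r^{\underline n}-r\Big),$$
and since the summand $\dim Rep(Q,\alpha)_{F^*}$ is a constant that does not involve $r$, the first move is simply to pull it out of the maximum, so that the right-hand side becomes $C+\dim Rep(Q,\alpha)_{F^*}+\max_r\big(s_r^{\underline n}-r\big)$.

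Next I would invoke Lemma 4.3: once $\underline n\geq\underline{n_0}$, the maximum $\max_r(s_r^{\underline n}-r)$ is attained at $r=0$ and hence equals $s_0^{\underline n}$. It is worth recording here the point from the proof of Lemma 4.3 that $S_0$ is intrinsic to the quiver $Q_{\underline 1}$ with all multiplicities one: an element of $End(\alpha)_{F^*}$ centralizes every flag-compatible representation of $Q_{\underline n}$ exactly when it centralizes every flag-compatible representation of $Q_{\underline 1}$, so $S_0^{\underline n}=S_0^{\underline 1}$ and $s_0^{\underline n}$ is the $\underline n$-independent constant $s_0$ of the statement. Substituting, I get $\dim\pi^{-1}(Rep(Q^{op},\alpha)_{F^*})=C+\dim Rep(Q,\alpha)_{F^*}+s_0$.

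What remains is pure bookkeeping: rewriting the linear-in-$\underline n$ term so that it appears, as in the statement, in terms of the dual flag $D(F^*)$. For this I would use the identifications set up in the proof of Proposition 3.6, which relate $Rep(Q^{op},\alpha)_{F^*}$ to $Rep(Q,\alpha)_{D(F^*)}$, together with the definition $C=\dim Rep(Q^{op},\alpha)_{F^*}-\dim End(\alpha)_{F^*}$ recorded in Corollary 3.7. I do not expect any real obstacle in this corollary: its entire content has been absorbed into Lemma 4.3, which supplies both the stabilization of the maxima and the $\underline n$-independence of $s_0$; the only care needed is in tracking the three quantities $C$, the linear term, and $s_0$, and in checking that the flag decorating the linear term matches what is written.
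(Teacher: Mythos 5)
Your first two paragraphs are exactly the intended argument: the paper states this corollary with no separate proof, as the immediate combination of Lemma 4.2 (pull the constant $\dim Rep(Q,\alpha)_{F^*}$ out of the maximum) and Lemma 4.3 (for $\underline n\geq\underline{n_0}$ the maximum of $s_r^{\underline n}-r$ is attained at $r=0$, and $S_0^{\underline n}=S_0^{\underline 1}$ so $s_0$ is independent of $\underline n$). Up to that point you have correctly obtained
$$\dim\pi^{-1}\bigl(Rep(Q^{op},\alpha)_{F^*}\bigr)=C+\dim Rep(Q,\alpha)_{F^*}+s_0 .$$

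The problem is your final ``bookkeeping'' paragraph: it would not go through as described. Proposition 3.6 identifies $Rep(Q,\alpha)_{D(F^*)}$ (via the trace pairing, as a dual space) with $Rep(Q^{op},\alpha)_{F^*}$, so it gives $\dim Rep(Q,\alpha)_{D(F^*)}=\dim Rep(Q^{op},\alpha)_{F^*}$; it says nothing relating $\dim Rep(Q,\alpha)_{D(F^*)}$ to $\dim Rep(Q,\alpha)_{F^*}$, and these two numbers differ in general. Concretely, compatibility with $F^*$ is a block upper-triangular condition and compatibility with $D(F^*)$ a block lower-triangular one: for an arrow $i\to j$ with block sizes $(a_1,\dots,a_s)$ at $i$ and $(b_1,\dots,b_s)$ at $j$ the dimensions are $\sum_{l\le k}b_la_k$ versus $\sum_{l\ge k}b_la_k$, e.g.\ blocks $(1,2)$ and $(3,1)$ give $6$ versus $11$. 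So no manipulation with $C$ converts the term $\dim Rep(Q,\alpha)_{F^*}$ you derived into $\dim Rep(Q,\alpha)_{D(F^*)}$. The resolution is that the displayed formula in the corollary should be read with $F^*$ in place of $D(F^*)$ (an inconsistency in the statement): the version with $\dim Rep(Q,\alpha)_{F^*}$ is what Lemmas 4.2--4.3 yield and is the form actually used later (Section 4.1 and Theorem 4.6 all carry the term $\dim Rep(Q,\alpha)_{F^*}$). So keep your first two paragraphs, drop the third, and record the corrected statement rather than trying to prove the $D(F^*)$ form as written.
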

\subsection{Dimension of HN Strata}
Let the HN strata of $\mu^{-1}(0)$ correspond to the dimension type $d^*$ be $T_{d^*}^{HN}$. We will have to modify our previous argument. Let $\varphi:Rep(Q,\alpha)_{F^*}\to R_{d_1}\times ...R_{d_k}$
be the composition of $\pi$ with the projection in \cite[Proposition 3.4]{reineke2003hn}. By \cite{king1994moduli}, $R_{d_1}^{ss}\times...\times R_{d_s}^{ss}$ is an open subvariety of $R_{d_1}\times...\times R_{d_k}$. Let 
$Rep(Q,\alpha)_{F^*}^{ss}=\varphi^{-1}(R_{d_1}^{ss}\times...\times R_{d_s}^{ss})$.
\begin{lem} If $Rep(Q,\alpha)_{F^*}^{ss}$ is nonempty then $\dim Rep(Q,\alpha)_{F^*}^{ss}=\dim Rep(Q,\alpha)_{F^*}$
\end{lem}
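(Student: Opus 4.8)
The strategy is to realise $Rep(Q,\alpha)_{F^*}^{ss}$ as a nonempty Zariski-open subset of the \emph{irreducible} variety $Rep(Q,\alpha)_{F^*}$; once this is done the claimed equality of dimensions is immediate, since a nonempty open subset of an irreducible variety is dense and hence equidimensional with it.

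First I would establish irreducibility of $Rep(Q,\alpha)_{F^*}$. By Definition 3.1 this space is cut out of the vector space $Rep(Q,\alpha)$ by the conditions $x_{\beta}(F_i^k)\subseteq F_j^k$, imposed for every edge $\beta\colon i\to j$ of $Q$ and every $1\le k\le s$; choosing, as in the proof of Proposition 3.6, a basis of each $V_i$ adapted to the flag, each such condition becomes a family of linear equations on the matrix entries of $x_\beta$. Hence $Rep(Q,\alpha)_{F^*}=\prod_{\beta\colon i\to j}Hom(V_i,V_j)_{F^*}$ is a linear subspace of $Rep(Q,\alpha)$, in particular an affine space, hence irreducible.

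Next I would check openness of $Rep(Q,\alpha)_{F^*}^{ss}$. The map $\varphi$ is a morphism of varieties: the associated-graded construction of \cite[Proposition 3.4]{reineke2003hn}, sending a representation compatible with $F^*$ to the tuple of induced representations on the subquotients $F^k/F^{k-1}$, is polynomial in the matrix entries, and the restriction map $\pi$ is the restriction of a linear projection, so the composite $\varphi$ is a morphism. By \cite{king1994moduli} each semistable locus $R_{d_i}^{ss}$ is open in $R_{d_i}$, so $R_{d_1}^{ss}\times\cdots\times R_{d_s}^{ss}$ is open in $R_{d_1}\times\cdots\times R_{d_k}$, and therefore $Rep(Q,\alpha)_{F^*}^{ss}=\varphi^{-1}(R_{d_1}^{ss}\times\cdots\times R_{d_s}^{ss})$ is open in $Rep(Q,\alpha)_{F^*}$. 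Combining the two points: if $Rep(Q,\alpha)_{F^*}^{ss}$ is nonempty, then being a nonempty open, hence dense, subset of the irreducible variety $Rep(Q,\alpha)_{F^*}$ it satisfies $\dim Rep(Q,\alpha)_{F^*}^{ss}=\dim Rep(Q,\alpha)_{F^*}$.

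There is no serious obstacle here; the two points one must not skip are (i) that the compatibility conditions defining $Rep(Q,\alpha)_{F^*}$ are \emph{linear}, which is what secures irreducibility and hence the passage from ``open and nonempty'' to ``full-dimensional'', and (ii) that $\varphi$ is genuinely a morphism, which is clear once the associated-graded map is written in flag-adapted coordinates. One may also note that the two index ranges $\{1,\dots,k\}$ and $\{1,\dots,s\}$ occurring in the definitions of $\varphi$ and of $Rep(Q,\alpha)_{F^*}^{ss}$ coincide, both enumerating the $s$ parts of the type $d^*$.
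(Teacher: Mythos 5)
Your proof is correct and follows essentially the same route as the paper: both rest on King's openness of the semistable loci together with the structure of $Rep(Q,\alpha)_{F^*}$ over $R_{d_1}\times\cdots\times R_{d_s}$ (the paper phrases this as $Z$ being a trivial vector bundle over the product, you phrase it as $Rep(Q,\alpha)_{F^*}$ being a linear subspace, hence irreducible, with $\varphi$ a morphism). Your version merely spells out the irreducibility and morphism checks that the paper leaves implicit.
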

\begin{proof}
This follows from the above result by King and the fact that $Z$ is a trivial vector bundle over $R_{d_1}\times...\times R_{d_s}$. \end{proof}
Consider the set 
$$W^{ss}=\{(g,x):x\in Rep(Q,\alpha)_{F^*}^{ss}:g\in End(x)_{F^*}\}\subset End(\alpha)_{F^*}\times Rep(Q,\alpha)_{F^*}^{ss}$$
Using the same argument, we have that for graphs satisfying the condition stated in Lemma 4.2,
$$\dim W^{ss}=s_0+\dim Rep(Q,\alpha)_{F^*}^{ss}=s_0+\dim Rep(Q,\alpha)_{F^*}$$Now we notice that 
$\displaystyle\dim W^{ss}=\dim_{G(\alpha)_F^*}Z^{ss}$. This gives,
$$\dim \pi^{-1}(Rep(Q,\alpha)_{F^*}^{ss})=s_0+\dim Rep(Q,\alpha)_{F^*}+C$$
 Let $\displaystyle\psi=\varphi\circ\pi$. Due to the simple observation that a representation in $Rep(\overline{Q},\alpha)$ is semistable if its image in the projection map to $Rep(Q,\alpha)$ is semistable, we have that 
$\pi^{-1}(Z)\supseteq\mu^{-1}(0)^{ss}_{F^*}\supseteq \psi^{-1}(R_{d_1}^{ss}\times..\times R_{d_k}^{ss})=\pi^{-1}(Z^{ss})$.
Therefore, from $\dim\pi^{-1}(Z)=\dim \pi^{-1}(Z^{ss})=s_0+\dim Rep(Q,\alpha)_{F^*}+C$, we have 
$$\dim \mu^{-1}(0)_{F^*}^{ss}=s_0+\dim Rep(Q,\alpha)_{F^*}+C$$
By Reineke's argument in [Rei03, Proposition 3.4], we have 
\begin{thm}
\begin{align*}\dim T_{d^*}^{HN}&
=s_0+\dim Rep(Q,\alpha)_{F^*}+\dim R_{d^*}^{HN}-\dim End(\alpha)_{F^*}\end{align*}
Here $R_{d^*}^{HN}$ is the $HN$ strata on $Rep(Q,\alpha)$.
\end{thm}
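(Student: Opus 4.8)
The plan is to reproduce, one level up, the associated--bundle argument of \cite[Proposition 3.4]{reineke2003hn}: realise $T_{d^*}^{HN}$ as the total space of a fibre bundle over $GL_\alpha(\mathbb C)/P$ with fibre the ``slice'' $\mu^{-1}(0)_{F^*}^{ss}$ --- whose dimension has just been computed --- and then compare with the analogous description of $R_{d^*}^{HN}$. Here $P\subseteq GL_\alpha(\mathbb C)$ denotes the parabolic subgroup stabilising the flag $F^*$; it is Zariski open in $End(\alpha)_{F^*}$, so $\dim P=\dim End(\alpha)_{F^*}$ and $\dim GL_\alpha(\mathbb C)/P=\alpha\cdot\alpha-\dim End(\alpha)_{F^*}$.

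First I would set up the bundle. Because $\mu^{-1}(0)$ is $GL_\alpha(\mathbb C)$--invariant and $\mu^{-1}(0)_{F^*}^{ss}$ is $P$--stable (an element of $P$ fixes $F^*$, hence carries a flag--compatible point of $\mu^{-1}(0)$ with semistable $F^*$--subquotients to another such), the multiplication map $q\colon GL_\alpha(\mathbb C)\times_{P}\mu^{-1}(0)_{F^*}^{ss}\to Rep(\overline{Q},\alpha)$, $(g,x)\mapsto g\cdot x$, is well defined, and its image is contained in $T_{d^*}^{HN}=Rep(\overline{Q},\alpha)_{d^*}^{HN}\cap\mu^{-1}(0)$. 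The crux is that $q$ is a bijection onto $T_{d^*}^{HN}$, indeed a locally closed immersion, precisely as in \cite{reineke2003hn} applied to the quiver $\overline{Q}$: surjectivity says that every point of $\mu^{-1}(0)$ of Harder--Narasimhan type $d^*$ can be $GL_\alpha(\mathbb C)$--conjugated so that its HN filtration becomes $F^*$, and injectivity follows from uniqueness of the HN filtration (Theorem 2.7 applied to $\overline{Q}$): for $x\in\mu^{-1}(0)_{F^*}^{ss}$ the flag $F^*$ \emph{is} the HN filtration of $x$, since its subquotients are semistable by construction of $\mu^{-1}(0)_{F^*}^{ss}$ as the locus mapping into $R_{d_1}^{ss}\times\cdots\times R_{d_s}^{ss}$ and the slopes strictly decrease because $d^*$ is a HN type, so any $g$ with $g\cdot x\in\mu^{-1}(0)_{F^*}^{ss}$ must preserve $F^*$, i.e.\ $g\in P$. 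As $GL_\alpha(\mathbb C)\to GL_\alpha(\mathbb C)/P$ is Zariski--locally trivial, passing to dimensions gives
$$\dim T_{d^*}^{HN}=\dim GL_\alpha(\mathbb C)/P+\dim\mu^{-1}(0)_{F^*}^{ss}=\bigl(\alpha\cdot\alpha-\dim End(\alpha)_{F^*}\bigr)+\dim\mu^{-1}(0)_{F^*}^{ss}.$$

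It then remains to rewrite the prefactor. Running the same argument on $Rep(Q,\alpha)$ in place of $\mu^{-1}(0)$ --- which is exactly \cite[Proposition 3.4]{reineke2003hn} --- gives $R_{d^*}^{HN}=GL_\alpha(\mathbb C)\times_{P}Z^{ss}$, hence $\dim R_{d^*}^{HN}=\alpha\cdot\alpha-\dim End(\alpha)_{F^*}+\dim Z^{ss}$, and $\dim Z^{ss}=\dim Z=\dim Rep(Q,\alpha)_{F^*}$ by Lemma 4.5 (the semistable locus being open and dense in the irreducible variety $Z$). Therefore $\alpha\cdot\alpha-\dim End(\alpha)_{F^*}=\dim R_{d^*}^{HN}-\dim Rep(Q,\alpha)_{F^*}$; substituting this into the previous display together with the value $\dim\mu^{-1}(0)_{F^*}^{ss}=s_0+\dim Rep(Q,\alpha)_{F^*}+C$ from the display preceding the statement, and rearranging, yields the asserted identity. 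The one step that really needs care --- everything else being the elementary dimension count above and the already-established formula for $\dim\mu^{-1}(0)_{F^*}^{ss}$ --- is the identification $T_{d^*}^{HN}=GL_\alpha(\mathbb C)\times_{P}\mu^{-1}(0)_{F^*}^{ss}$: one must verify that intersecting Reineke's Harder--Narasimhan stratification of $Rep(\overline{Q},\alpha)$ with the $GL_\alpha(\mathbb C)$--invariant subvariety $\mu^{-1}(0)$ is compatible with the associated--bundle description of each stratum, and that $q$ remains a locally closed immersion --- not merely a bijective morphism --- after this restriction.
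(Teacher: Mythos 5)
Your proposal is essentially the paper's own argument: the paper's proof is exactly the associated-bundle identification $\mu^{-1}(0)_{F^*}^{ss}\times_P G\cong T_{d^*}^{HN}$ (the bottom isomorphism in its diagram), combined with the previously computed $\dim\mu^{-1}(0)_{F^*}^{ss}=s_0+\dim Rep(Q,\alpha)_{F^*}+C$ and Reineke's formula $\dim R_{d^*}^{HN}=\dim GL_{\alpha}/P+\dim Rep(Q,\alpha)_{F^*}$ to rewrite $\dim GL_{\alpha}/P$, so your route and arithmetic coincide with the paper's. The only wrinkle is that for surjectivity of $q$ the fibre must be the locus in $\mu^{-1}(0)_{F^*}$ whose $F^*$-subquotients are semistable as $\overline{Q}$-representations, not merely $\pi^{-1}(R_{d_1}^{ss}\times\cdots\times R_{d_s}^{ss})$ (semistability of the $Q$-projection is sufficient but not necessary for $\overline{Q}$-semistability); since the paper's sandwich $\pi^{-1}(Z)\supseteq\mu^{-1}(0)_{F^*}^{ss}\supseteq\pi^{-1}(Z^{ss})$ shows all these loci have the same dimension, this does not affect the count.
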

\begin{proof}
Consider the commutative diagram
\begin{center}
\begin{tikzcd}
\mu^{-1}(0)_{F^*}\times_{P}G \arrow[r] & Q_{d^*} \\
\mu^{-1}(0)_{F^*}^{ss}\times_PG \arrow[r, "\cong"] \arrow[u, hook] & T_{d^*}^{HN} \arrow[u, hook]
\end{tikzcd}
\end{center}
We have that the top map is surjective, and 
\begin{align*}
  \dim Q_{d^*}&\leq \dim \mu^{-1}(0)_{F^*}\times_{P}G\\& =\dim \mu^{-1}(0)_{F^*}^{ss}\times_PG \\
  &=\dim T_{d^*}^{HN}
\end{align*}
Hence all these dimensions must be actually equal. As a result,
\begin{align*}\dim T_{d^*}^{HN}&=\dim G(\alpha)-\dim P_{d^*}+s_0+Rep(Q,\alpha)_{F^*}+C\\
&=s_0+\dim Rep(Q,\alpha)_{F^*}+\dim R_{d^*}^{HN}-\dim End(\alpha)_{F^*}\end{align*}
\end{proof}
\begin{rem} It is tempting to find a general formula for $\dim T_{d^*}^{HN}$. We notice that in the proof of the dimension of absolutely decomposable representations in \cite{kac1983root} uses the following fact:
\begin{enumerate}
    \item[(i)] There are finitely many unipotent conjugacy classes in $GL_{\alpha}(\mathbb C)$.
    \item[(ii)] There is a nice representative for each of these conjugcay classes (given by Jordan canonical form).
\end{enumerate}
However, to generalise this method to our present setting, we will need similar properties for the parabolic subgroup $Rep(Q,\alpha)_{F^*}$. However, this is not true for arbitrary $F^*$. Indeed, by \cite{hille1999parabolic}, $Rep(Q,\alpha)_{F^*}$ has finitely many unipotent conjugacy classes if and only if $F^*$ has at most five terms. In the case of maximal parabolic subgroups (i.e. when $F^*$ has only two terms), a representative of conjugacy classes (for small dimensions) was constructed in \cite{murray2000conjugacy}. However, it is still significantly more complicated than the Jordan canonical form and is difficult to use in computations.
\end{rem}
\section{One-parameter Subgroup and Improving the Bound}
\begin{lem} Suppose $V,W$ are two vector spaces, for $A\in End(V)_{F^*}$ and $B\in End(W)_{F^*}$ define 
$$r(A,B)=\{f\in Hom(V,W)_{F^*}:fA=Bf\}$$
then the function $c:End(V)_{F^*}\times End(W)_{F^*}\to \mathbb Z$ given by 
$$c(A,B)=\dim r(A,B)$$
is upper-semicontinuous in the Zariski topology.
\end{lem}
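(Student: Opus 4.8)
The plan is to realize $r(A,B)$ as the kernel of a linear map whose matrix entries are polynomial (in fact linear) functions of the pair $(A,B)$, and then invoke the standard fact that the dimension of the kernel of a parametrized family of linear maps is upper-semicontinuous. Concretely, fix the basis of $V$ adapted to the flag $F^*$ (so that each $F^k$ is spanned by an initial segment of basis vectors) and likewise for $W$. Then $\mathrm{Hom}(V,W)_{F^*}$, $\mathrm{End}(V)_{F^*}$, $\mathrm{End}(W)_{F^*}$ are all coordinate subspaces of the relevant matrix spaces, cut out by the vanishing of the entries that would violate flag-compatibility. On this fixed vector space $\mathrm{Hom}(V,W)_{F^*}$, consider the linear map
$$
\Phi_{(A,B)} : \mathrm{Hom}(V,W)_{F^*} \to \mathrm{Hom}(V,W), \qquad f \mapsto fA - Bf .
$$
Each entry of $\Phi_{(A,B)}(f)$ is bilinear in $(A,B)$ and $f$, so the entries of the matrix of $\Phi_{(A,B)}$ (with respect to fixed bases) are polynomial — indeed affine-linear — functions of the coordinates of $(A,B) \in \mathrm{End}(V)_{F^*} \times \mathrm{End}(W)_{F^*}$. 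By construction $r(A,B) = \ker \Phi_{(A,B)}$, hence $c(A,B) = \dim \mathrm{Hom}(V,W)_{F^*} - \mathrm{rank}\,\Phi_{(A,B)}$.

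Next I would check that $\Phi_{(A,B)}$ really does land in $\mathrm{Hom}(V,W)$ and that restricting to $\mathrm{Hom}(V,W)_{F^*}$ is legitimate, i.e. that for $f$ flag-compatible, $fA-Bf$ is an honest linear map $V\to W$ (automatic) — note we do \emph{not} need $fA-Bf$ to be flag-compatible; we only need the condition $fA=Bf$, which is the vanishing of this map. Since $A\in\mathrm{End}(V)_{F^*}$ and $B\in\mathrm{End}(W)_{F^*}$ it is in fact true that $fA-Bf\in\mathrm{Hom}(V,W)_{F^*}$ as well, but this is not needed for the argument; either target works.

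The remaining step is the semicontinuity of rank. The rank of a matrix $M$ is $\geq k$ iff some $k\times k$ minor is nonzero; hence $\{M : \mathrm{rank}\,M \leq k\}$ is Zariski closed, being cut out by the vanishing of all $(k+1)\times(k+1)$ minors. Pulling back along the polynomial map $(A,B) \mapsto \Phi_{(A,B)}$, the set $\{(A,B) : \mathrm{rank}\,\Phi_{(A,B)} \leq k\} = \{(A,B) : c(A,B) \geq \dim\mathrm{Hom}(V,W)_{F^*} - k\}$ is Zariski closed in $\mathrm{End}(V)_{F^*}\times\mathrm{End}(W)_{F^*}$. As this holds for every $k$, the function $c$ is upper-semicontinuous, as claimed. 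I do not anticipate a genuine obstacle here; the only thing requiring a moment's care is the bookkeeping that $\mathrm{Hom}(V,W)_{F^*}$, $\mathrm{End}(V)_{F^*}$, $\mathrm{End}(W)_{F^*}$ are all linear subspaces in adapted coordinates so that the entries of $\Phi_{(A,B)}$ are genuinely polynomial (not merely regular on some open set) — but this is immediate from the explicit description of flag-compatible matrices as block upper-triangular with respect to the adapted basis.
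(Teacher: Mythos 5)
Your proposal is correct and is essentially the paper's own argument: both realize $r(A,B)$ as the kernel of the linear map $f\mapsto fA-Bf$ on $Hom(V,W)_{F^*}$, whose matrix entries depend polynomially (linearly) on $(A,B)$, and then deduce upper-semicontinuity of $c$ from the lower-semicontinuity of rank via vanishing of minors. Your write-up is in fact slightly more careful than the paper's (which states the map with a typo and glosses over the target space), but the route is the same.
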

\begin{proof}
Define $r_{A,B}:Hom(V,W)_{F^*}\to Hom(V,W)_{F^*}$ given by $r_{A,B}(f)=\operatorname{rank}fA-Bf$, then 
$$(A,B)\mapsto rank(r_{A,B})$$
is lower-semicontinuous since the inverse image of $(-\infty,n]$ is given by vanishing of determinants. Hence 
$$c_{A,B}=\dim Hom(V,W)_{F^*}-r_{A,B}$$
is upper-semicontinuous. 
\end{proof}
\begin{defn} Suppose $F^*:\{0\}=F^0\subset F^1\subset...\subset F^s=\oplus V_i $ is a flag, then we let 
$$\epsilon_{F^*}=\min_{i\in I}\{\dim F^1_i,\operatorname{codim}_{V_i}F_i^{s-1}\}$$
If $d^*$ is a partition of $\alpha$, define $\epsilon_{d^*}=\epsilon_{F^*}$, where $F^*$ is a flag of type $d^*$.
\end{defn}
\begin{ex} if we have the $A_1$ quiver, dimension vectors $(10,15)$ and the flag $\mathbb F^*$ is given by 
$$\{0\}\oplus\{0\}\subset \mathbb C^3\oplus \mathbb C^5\subset \mathbb C^4\oplus \mathbb C^{11}\subset \mathbb C^{10}\oplus C^{15}$$
then $\epsilon_{F^*}=3$.
\end{ex}
\begin{thm} If $\epsilon_{F^*}\geq n$, then $s_0=1$ and $s_l=0$ for all $1\leq l\leq n-1$ as in definition 4.1.
\end{thm}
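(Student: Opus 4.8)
The plan is to deduce everything from one fibrewise estimate for $\pi_1\colon W\to End(\alpha)_{F^*}$: if $g\in End(\alpha)_{F^*}$ is not a scalar $\lambda\cdot\mathrm{id}_{k^{\alpha}}$, then $\dim\pi_1^{-1}(g)\le\dim Rep(Q,\alpha)_{F^*}-n$. Since scalars commute with every compatible representation, $\dim\pi_1^{-1}(\lambda\cdot\mathrm{id})=\dim Rep(Q,\alpha)_{F^*}$, so $S_0$ contains the scalar line; and the estimate (with $n\ge 1$) shows it contains nothing else, whence $s_0=1$. Likewise, for $1\le l\le n-1$ the estimate forces every $g$ with $\dim\pi_1^{-1}(g)\ge\dim Rep(Q,\alpha)_{F^*}-l$ to be scalar, so $S_l=S_0$ is the scalar line and no stratum of fibre-codimension exactly $l$ occurs, i.e.\ $s_l=0$. (I take $Q$ connected throughout, as one must: otherwise $S_0$ has dimension equal to the number of connected components.)

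To prove the estimate, observe that $\pi_1^{-1}(g)=\prod_{\beta}r(g_{a},g_{b})$ inside $Rep(Q,\alpha)_{F^*}=\prod_{\beta}Hom(V_{a},V_{b})_{F^*}$, the product over edges $\beta\colon a\to b$; so it suffices to find one edge $\beta$ for which the linear map $T_{\beta}(f)=fg_{a}-g_{b}f$ on $Hom(V_{a},V_{b})_{F^*}$ has rank $\ge n$. If every $g_i$ equals a scalar $\gamma_i\cdot\mathrm{id}$ but the $\gamma_i$ are not all equal, connectedness yields an edge $\beta\colon a\to b$ with $\gamma_a\ne\gamma_b$; then $T_\beta=(\gamma_a-\gamma_b)\cdot\mathrm{id}$ is invertible, and since every linear map $V_a\to F_b^1$ is automatically compatible with $F^*$, we get $\operatorname{rank}T_\beta=\dim Hom(V_a,V_b)_{F^*}\ge\dim F_b^1\ge\epsilon_{F^*}\ge n$.

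Otherwise some $g_i$ is non-scalar, so we may fix $v\in V_i$ with $v,g_iv$ linearly independent, and an edge $\beta$ incident to $i$. If $\beta\colon i\to b$, choose $\phi\in V_i^*$ with $\phi(v)=0$, $\phi(g_iv)=1$; then $g_i^*\phi$ (the transpose acting on $V_i^*$) is not proportional to $\phi$, and the rank-one maps $w\otimes\phi\colon u\mapsto\phi(u)w$ with $w\in F_b^1$ span a subspace of $Hom(V_i,V_b)_{F^*}$ of dimension $\dim F_b^1\ge n$ on which $T_\beta(w\otimes\phi)=w\otimes g_i^*\phi-(g_bw)\otimes\phi$ vanishes only for $w=0$; hence $\operatorname{rank}T_\beta\ge n$. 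If instead $\beta\colon b\to i$, keep $v$ fixed and let $\psi$ range over the functionals on $V_b$ that vanish on $F_b^{s-1}$; each $v\otimes\psi\colon V_b\to V_i$ is compatible with $F^*$, they span a subspace of dimension $\operatorname{codim}_{V_b}F_b^{s-1}\ge n$, and $T_\beta(v\otimes\psi)=v\otimes g_b^*\psi-(g_iv)\otimes\psi$ vanishes only for $\psi=0$ since $v,g_iv$ are independent; again $\operatorname{rank}T_\beta\ge n$. This completes the estimate.

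The main thing to be careful about is the split into the two orientations of an edge at $i$, which use the two opposite ends of the flag — this is exactly why $\epsilon_{F^*}$ is the minimum of $\dim F_i^1$ and $\operatorname{codim}_{V_i}F_i^{s-1}$ — together with the degenerate case where $g$ is scalar at each vertex but not globally; the rest is routine linear algebra over $\prod_\beta Hom(V_a,V_b)_{F^*}$. A variant, in the spirit of the section title, uses Lemma 5.1: since $g\mapsto\dim\pi_1^{-1}(g)$ is upper semicontinuous and constant on $P_{F^*}$-conjugacy orbits, one may degenerate $g$ along a one-parameter subgroup of $P_{F^*}$ to its semisimple part and assume $g$ diagonal in a flag-adapted basis, reducing the estimate to counting, along each edge, the pairs of basis vectors on which $g$ has equal eigenvalues; the cost is treating the leftover scalar-plus-nilpotent elements by hand and coping with the non-reductivity of $P_{F^*}$, which the direct argument above sidesteps.
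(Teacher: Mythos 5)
Your argument is correct, and it establishes the same key per-edge estimate that drives the paper's proof — for an edge $\beta\colon a\to b$, if $(g_a,g_b)$ is not a pair of equal scalars then $r(g_a,g_b)$ has codimension at least $\epsilon_{F^*}$ in $Hom(V_a,V_b)_{F^*}$, with the $\epsilon_{F^*}$ independent directions coming from $F^1$ at heads and from the annihilator of $F^{s-1}$ at tails — but by a genuinely different route. The paper conjugates $A,B$ to triangular form inside the parabolic via the Levi decomposition, degenerates along the one-parameter subgroup $M_t$ and invokes the upper semicontinuity of $c$ (Lemma 5.1) to reduce to diagonal matrices, and then treats separately the cases of non-constant diagonal and of scalar-plus-nilpotent; you avoid normal forms and semicontinuity entirely by choosing $v$ with $v,g_iv$ independent and exhibiting an explicit $n$-dimensional space of rank-one compatible maps on which $f\mapsto fg_a-g_bf$ is injective, which handles semisimple and nilpotent behaviour of $g$ uniformly, and you replace the paper's induction on $n$ by the single global observation that a non-scalar $g$ has at least one constrained edge. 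What each buys: the degeneration argument is the natural companion of Lemma 5.1 and of the one-parameter-subgroup theme of the section, while your direct computation is shorter, sidesteps the delicate points about conjugating inside a non-reductive group and the leftover nilpotent case, and makes completely transparent why $\epsilon_{F^*}$ is the minimum of $\dim F_i^1$ and $\operatorname{codim}_{V_i}F_i^{s-1}$. Two further points, both to your credit: you state the connectedness hypothesis explicitly (it is needed by the paper as well, since otherwise $s_0$ equals the number of components; both proofs also implicitly need every vertex to meet an edge, which holds in the setting of $Q_{\underline{n}}$ with $\underline{n}\geq\underline{1}$), and your reading of the conclusion is the right repair of the statement: since $S_0\subseteq S_l$ one literally has $s_l\geq 1$ (the paper's own proof notes this), and what is actually proved — and what Lemma 4.3 and Corollary 5.5 use — is that $S_l$ is exactly the scalar line for $0\leq l\leq n-1$, i.e.\ the locus of fibre-codimension exactly $l$ is empty for $1\leq l\leq n-1$.
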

\begin{proof}
We proceed by induction on $n$. The case $n=0$ is evident.
\\~\\ Suppose all smaller cases hold, then by inductive hypothesis it suffices to prove $s_{n-1}\leq1$, if $(g_i)_{i\in I}\in S_{n-1}$, then by inductive hypothesis there exists $i,j\in I$ such that in the notation of Lemma 5.1, $c(g_i,g_j)\geq \dim Hom(V_i,V_j)_{F^*}$. 
We first notice that the element whose $i$ component is $\lambda \operatorname{id}$ for every $i\in I$ belongs to $S_0$ and hence all of $S_1,...,S_{n-1}$, hence $s_0,...,s_{n-1}\geq 1$. Therefore, it suffices to show that $\beta:i\to j$ is an edge, then for $A\in End(V)_{D(F^*)},B\in End(W)_{D(F^*)}$ where $(A,B)\neq (\lambda \operatorname{id},\lambda\operatorname{id})$, then we must have
$$c(A,B)\leq \dim Hom(V,W)_{F^*}-\epsilon_{F^*}\hspace{20pt}(1)$$
let $A=l_Au_A$ and $B=l_Bu_B$ be its Levi decomposition in its respective parabolic subalgebra $End(V)_{F^*},End(W)_{F^*}$. We notice that if we conjugate $A$ by some $p=lv\in End(V)_{F^*}$, then the levi decomposition of $pAp^{-1}$ will be $ll_Al^{-1}$. Together with the fact that $c(pAp^{-1},B)=c(A,B)$ we can WLOG assume that $A$, and similarly $B$ are both lower triangular matrices.
\\~\\ Now we let $D_A$ and $D_B$ be the set of diagonal entries of $A,B$ respectively. We divide into two cases:
\\~\\\underline{Case I:} $|D_{A}\cup D_B|\geq 2$
\\ Pick positive integers $a_1>a_2>...>a_{\dim V}$. Define, for all $t\in\mathbb C$, matrices $M_t=\operatorname{diag}(t^{a_1},...,t^{a_n})$. Consider the mapping $\lambda:\mathbb C^*\to End(V)_{*}$ 
$$t\mapsto M_tAM_t^{-1}$$
We notice that 
$$(M_tAM_t^{-1})_{ij}=(t^{a_i-a_j}A)_{ij}$$
By the assumption that $A$ is upper triangular, we notice that the matrix $d(A)$ consisting only of the diagonal entries of $A$ lies in $\lambda(\mathbb C^*)$, meanwhile, we have that $c(M_tAM_t^{-1},B)=c(A,B)$.
Therefore, by Lemma 5.1, we have 
$$c(d(A),B)\geq c(A,B)$$
Hence, we may WLOG assume $A$ and similarly $B$ are diagonal matrices. Now  either
\begin{enumerate}
    \item[(i)] For each $1\leq j\leq \epsilon_{F^*}$ there exists $1\leq i\leq \dim V$ such that $a_{i,i}\neq b_{j,j}$, or
    \item[(ii)] for each $1\leq i\leq \epsilon_{F^*}$ there exists some $1\leq j\leq \dim W$ such that $a_{\dim V-i,\dim V-i}\neq b_{j,j}$
\end{enumerate} 
In the former case, the map sending $v_i$ to $w_j$ is not in $r(A,B)$, and in the latter case, the map sending $v_{\dim V-i}$ to $w_j$ is not in $r(A,B)$. However, both of these are in $Hom(V,W)_{F^*}$ by the definition of $\epsilon{F^*}$. Hence, the codimension of $r(A,B)$ is at least $d$.
\\~\\ \underline{Case II}: $|D_A\cup D_B|=1$
\\ This is equivalent to all the diagonal entries of $A,B$ being the same. Therefore, we may write $A=\lambda I+A',B=\lambda I+B'$. Notice that $\lambda I$ commutes with all maps, so $$c(A,B)=c(A',B')$$ By our assumption, at least one of $A',B'$ must be nonzero. Suppose $A'\neq 0$, then by the $\mathbb C^*$ action argument in Case I, we may assume $B'=0$. Then we notice that $fA=0$ if the image of $A$, which is nonzero by assumption, lies in the kernel of $f$. This again shows that the codimension of $r(A,B)$ is at least $\epsilon_{F^*}$. 
\\~\\ Similarly, if $B'\neq 0$, then we can assume $A'=0$, then $B'f=0$ implies that $f$ must map into the kernel of $B'$, which is not $W$ by assumption, and hence the codimension is at least $\epsilon_{F^*}$ by the same argument.
\end{proof}
\begin{cor} Suppose $\epsilon_{d^*}\geq 1$. If $\underline{n}$ satisfies $\underline{n}\geq\displaystyle\frac{End(\alpha)_{d^*}}{\epsilon_{d^*}}\underline{1}$, then 
$$\dim T_{d^*}^{HN}=1+\dim R_{d^*}^{HN}+C$$
\end{cor}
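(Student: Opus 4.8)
The plan is to read off the value of $s_0$ from Theorem~5.4 and substitute it into the dimension formula of Theorem~4.6, first checking that the stated bound on $\underline n$ is exactly what is needed for the input to Theorem~4.6 (equivalently, the conclusion $s_0=\max_r(s_r-r)$ of Lemma~4.2) to remain valid. Once these two ingredients are in place the corollary is a one-line rearrangement.

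I would begin by fixing a flag $F^*$ of type $d^*$, so that $\epsilon_{F^*}=\epsilon_{d^*}\ge 1$, and applying Theorem~5.4 with $n=1$ to obtain $s_0=1$; this holds for $Q_{\underline n}$ for every $\underline n$. Write $G=\dim End(\alpha)_{d^*}$. The core of the argument is then a quantitative refinement of Lemma~4.2. For $g=(g_i)\in End(\alpha)_{F^*}$ the fibre $\pi_1^{-1}(g)\subseteq Rep(Q_{\underline n},\alpha)_{F^*}$ splits over the edges, with $\operatorname{codim}\pi_1^{-1}(g)=\sum_{\beta\colon i\to j}n_\beta\bigl(\dim Hom(V_i,V_j)_{F^*}-c(g_i,g_j)\bigr)$. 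If $g\notin S_0$ then $r(g_i,g_j)\ne Hom(V_i,V_j)_{F^*}$ for some edge $\beta\colon i\to j$, so $(g_i,g_j)$ is not a pair of equal scalars and inequality~(1) from the proof of Theorem~5.4 gives $\dim Hom(V_i,V_j)_{F^*}-c(g_i,g_j)\ge\epsilon_{d^*}$; hence $\operatorname{codim}\pi_1^{-1}(g)\ge n_\beta\,\epsilon_{d^*}\ge(G/\epsilon_{d^*})\,\epsilon_{d^*}=G$. Consequently $S_r^{\underline n}=S_0$ for all $r<G$, so $s_r^{\underline n}-r=1-r\le 1=s_0$, while for $r\ge G$ one has $s_r^{\underline n}\le G\le r$ and so $s_r^{\underline n}-r\le 0<s_0$. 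In every case $\max_r(s_r^{\underline n}-r)=s_0$, which is precisely what is needed to run the chain Lemma~4.1 $\Rightarrow$ Corollary~4.3 $\Rightarrow$ Theorem~4.6.

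Finally, Theorem~4.6 with $s_0=1$ gives $\dim T_{d^*}^{HN}=1+\dim Rep(Q,\alpha)_{F^*}+\dim R_{d^*}^{HN}-\dim End(\alpha)_{F^*}$, and by the definition of $C$ in Corollary~3.7 together with the dimension bookkeeping of Section~3 — the spaces $Hom(V_i,V_j)_{F^*}$ for $Q$, $Q^{op}$ and the dual flag $D(F^*)$ are matched by the pairing in Proposition~3.6 — this rearranges to $\dim T_{d^*}^{HN}=1+\dim R_{d^*}^{HN}+C$. I expect the only genuine obstacle to be the middle paragraph, i.e.\ making Lemma~4.2 effective: the decisive point is that a single ``bad'' edge already costs codimension at least $\epsilon_{d^*}$ by inequality~(1), so after multiplying by the edge multiplicity the threshold $G/\epsilon_{d^*}$ on $\underline n$ is exactly enough to force every $S_r^{\underline n}$ with $r<G$ to collapse onto $S_0$.
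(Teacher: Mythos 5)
Your proposal is correct and follows essentially the paper's own route: the core step is exactly the paper's, namely using inequality (1) from the proof of Theorem 5.4 (per-edge codimension at least $\epsilon_{d^*}$ for a non-scalar pair) together with the edge multiplicities $n_\beta\geq \dim End(\alpha)_{F^*}/\epsilon_{d^*}$ to force $\max_r(s_r^{\underline{n}}-r)=s_0=1$, which is precisely the condition needed to run Lemma 4.1, Corollary 4.3 and Theorem 4.6. The only wrinkle is in your final bookkeeping: the pairing of Proposition 3.6 identifies $Rep(Q,\alpha)_{D(F^*)}$ with the dual of $Rep(Q^{op},\alpha)_{F^*}$, not $Rep(Q,\alpha)_{F^*}$ with $Rep(Q^{op},\alpha)_{F^*}$, but this discrepancy is already present between the statement of Theorem 4.6 and the first displayed line of its proof, which yields $\dim T_{d^*}^{HN}=s_0+\dim R_{d^*}^{HN}+C$ directly and is the form the corollary uses.
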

\begin{proof}
Following the notation as in Lemma 4.2, for all $1\leq a\leq n'\epsilon_{d^*}-1$,
$$S_{a}^{m\underline{1}}=\emptyset$$
Hence 
$$\max_{r>0}(s_{r}^{\underline{n}}-r)\leq G-\epsilon_{d^*}n'$$
Hence the result follows from the fact that $s_0=1$.
\end{proof}
\subsection{Codimension and Transverse Intersection}
We continue the assumption in Corollary 5.5. Let $\overline{R}_{d^*}^{HN}$be the $HN$ strata of the quiver $\overline{Q}$. We notice that $\mu^{-1}(0)\cap \overline{R}_{d^*}^{HN}=T_{d^*}^{HN}$. We hence have the diagram

\begin{center}
\begin{tikzcd}
\mu^{-1}(0) \arrow[r, hook]  & Rep(\overline{Q},\alpha)  \\
T_{d^*}^{HN}\arrow[u, hook] \arrow[r, hook] & R_{d^*}^{HN}\arrow[u, hook]
\end{tikzcd}
\end{center}
It is natural to consider the codimension of the two vertical arrows.
By [CB01], we have 
$$\dim \mu^{-1}(0)=\alpha\cdot\alpha+1-2\langle\alpha,\alpha\rangle_{Q}$$
Meanwhile, by [Rei03, Proposition 3.4], we have 
\begin{align*}
\dim \overline{R}_{d^*}^{HN}-\dim R_{d^*}^{HN}&=\dim Rep(\overline{Q},\alpha)_{F^*}-\dim Rep(Q,\alpha)_{F^*}\\
&=\dim Rep(Q^{op},\alpha)_{F^*}
\end{align*}
Also recall that $C=\dim Rep(Q^{op},\alpha)_{F^*}-\dim End(\alpha)_{F^*}$. 
As a result,
\begin{align*}
\operatorname{codim}_{Rep(\overline{Q},\alpha)}\mu^{-1}(0)&=2(-\langle\alpha,\alpha\rangle_{Q}+\alpha\cdot\alpha)-(\alpha\cdot\alpha+1-2\langle\alpha,\alpha\rangle_Q)\\
&=\alpha\cdot\alpha-1\\
&=\dim End(\alpha)-1
\end{align*}
and 
\begin{align*}
\operatorname{codim}_{\overline{R}_{d^*}^{HN}}T_{d^*}^{HN}&=\dim \overline{R}_{d^*}^{HN}-\dim R_{d^*}^{HN}-1-C\\
&=\dim Rep(Q^{op},\alpha)_{F^*}-1-C\\
&=End(\alpha)_{F^*}-1
\end{align*}
Therefore, 
\begin{align*}    \operatorname{codim}_{Rep(\overline{Q},\alpha)}R_{d^*}^{HN}-\operatorname{codim}_{\mu^{-1}(0)}T_{d^*}^{HN}&=\operatorname{codim}_{Rep(\overline{Q},\alpha)}\mu^{-1}(0)-\operatorname{codim}_{\overline{R}_{d^*}^{HN}}T_{d^*}^{HN}\\
    &=\dim End(\alpha)-\dim End(\alpha)_{F^*}
\end{align*}
\section{Hua's Formula}
\subsection{Strata and Cohomology}
We recall the following correspondence between Harder-Narasimhan stratification of quiver representation and the Hesselink stratification, as explained in \cite{hoskins2018stratifications}:  In general, suppose $G$ is a reductive group acting on a scheme $Y$ with respect to a linearisation $L$. If one of the conditions in \cite[Section 2]{hoskins2018stratifications} hold, and we have a norm $||\bullet||$ on the conjugacy classes of $1$-PSs of $G$, then we have a stratification 
$$Y\setminus Y^{ss}=\bigsqcup_{\beta}$$
which is a stratification of $Y$ into finitely many $G$-invariant locally closed subschemes. By \cite[Remark 2.10]{hoskins2018stratifications}, this stratification is indexed by conjugacy classes of rational $1$-PS. 
\\~\\ Specialising to the setting of quiver representations, suppose $\alpha\in\mathbb Z^I$. Let $\chi_{\theta}$ be a character of $GL_{\alpha}$ arising from $\theta\in\mathbb Z^I$ as in section 2.1, it corresponds to a linearisation of $Rep(Q,\alpha)$. We have the following norm constructed in \cite[Example 2.4]{hoskins2018stratifications}: For any $n$, fix a maximal torus $T$ of $GL_{n}$ and a Weyl-invariant norm $||\bullet||_T$ on $X_{*}(T)_{\mathbb R}$. This is extended to $X_{*}(GL_{n}(\mathbb C))$ via the conjugacy action of $GL_{n}(\mathbb C)$. For the product $GL_{\alpha}$, define
$$||(\lambda_1,...,\lambda_r)||_{\alpha}^2:=\sum_{i=1}^r\alpha_i||\lambda_i||^2$$
Now for a HN type $d^*=(d_1,...,d_s)$, let $r_i=-(\theta\cdot d_i)/(\alpha\cdot d_i)$, 
suppose $v$ is a vertex in the quiver, the $1$-PS for $GL_{\alpha_v}$ is given by 
$$\lambda_{\gamma,v}'(t)=\operatorname{diag}(t^{r_1},...,t^{r_1},t^{r_2},...,t^{r_2},...,t^{r_s},...,t^{r_s})$$
These patch together to form a rational $1$-PS $\lambda_{d^*}$ on $GL_{\alpha}(\mathbb C)$. The main theorem in \cite{hoskins2014stratifications},\cite{hoskins2018stratifications} states:
\begin{thm}
\cite[Theorem 3.8]{hoskins2018stratifications} Let $(Q,R)$ be a quiver with relations with dimension vector $d\in\mathbb N^I$, then the HN-stratification with respect to $(\theta,\alpha)$.  and the Hesselink stratification with respect to $\chi_{\theta}$ and $||\bullet||_{\alpha}$ conicide. That is, if 
$$Rep(Q,\alpha,R)=\bigsqcup_{d^*}R_{d^*}\text{ and }Rep(Q,\alpha,R)=\bigsqcup_{[\gamma]}S_{[\gamma]}$$
then we have $R_{d^*}=S_{\lambda_{d^*}}$.
\end{thm}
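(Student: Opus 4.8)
The plan is to match the data indexing the two stratifications: a Harder--Narasimhan type $d^*=(d_1,\dots,d_s)$ of $\alpha$ on one side, and a conjugacy class of rational $1$-PS of $GL_\alpha(\mathbb C)$ on the other. It suffices to show that, for every $x$ whose HN filtration has type $d^*$, the rational $1$-PS $\lambda_{d^*}$ is the Kempf optimal destabilising $1$-PS of $x$ in the sense of the Hesselink/Kirwan--Ness theory, and that distinct HN types yield distinct $1$-PS classes; since $x\mapsto d^*(x)$ and $x\mapsto[\lambda_{d^*(x)}]$ are then the same map up to this bijection, the two partitions of $Rep(Q,\alpha,R)$ agree stratum by stratum, which is exactly $R_{d^*}=S_{\lambda_{d^*}}$. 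The relations $R$ enter only through the fact that $Rep(Q,\alpha,R)\subset Rep(Q,\alpha)$ is a closed $GL_\alpha$-invariant subscheme to which both Reineke's HN theory (Theorem~2.7, Theorem~2.8) and the Hesselink stratification restrict, so I suppress $R$.

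First I would record the dictionary between $1$-PS and filtrations. Conjugating a $1$-PS $\lambda$ into the standard maximal torus, the weight decomposition of each $V_v=\mathbb C^{\alpha_v}$ assembles into an $I$-graded flag $F^\bullet$ of $\mathbb C^\alpha$; the limit $\lim_{t\to 0}\lambda(t)\cdot x$ exists exactly when $x$ is compatible with $F^\bullet$ in the sense of Definition~3.1, i.e.\ $F^\bullet$ is a filtration of $x$ by subrepresentations, in which case the limit is $\operatorname{gr}_{F^\bullet}(x)$. With $w_1>\dots>w_m$ the weights on the graded pieces $G^j=F^j/F^{j-1}$ and $\beta^j=\underline{\dim}\,G^j$, one has, up to the sign and normalisation conventions fixed in \cite{king1994moduli,hoskins2018stratifications},
$$\mu^{\chi_\theta}(x,\lambda)=-\sum_{j=1}^m w_j\,(\theta\cdot\beta^j),\qquad \|\lambda\|_\alpha^2=\sum_{j=1}^m w_j^2\,(\alpha\cdot\beta^j).$$
Since $\theta\cdot\alpha=0$ a constant $1$-PS contributes zero weight, so extremising $\mu^{\chi_\theta}(x,\lambda)/\|\lambda\|_\alpha$ amounts to optimising over nontrivial subrepresentation filtrations of $x$ and strictly ordered weight tuples.

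The heart of the argument is to identify the optimum with $\lambda_{d^*}$. I would invoke the description of Hesselink strata recalled from \cite[Section~2]{hoskins2018stratifications}: $S_{[\gamma]}=G\cdot Y_{[\gamma]}^{ss}$, where $Y_{[\gamma]}$ consists of the $x$ for which $\lim_{t\to 0}\gamma(t)\cdot x$ exists and lies in the part of the fixed locus that is semistable for the centraliser $L(\gamma)$ with respect to the shifted linearisation, $\gamma$ being adapted to $x$. Translating: $P(\lambda_{d^*})$ stabilises the standard flag of type $d^*$, so $Y_{[\lambda_{d^*}]}$ is the set of representations compatible with such a flag; $\lim_{t\to 0}\lambda_{d^*}(t)\cdot x$ is its associated graded; $L(\lambda_{d^*})=GL_{d_1}\times\dots\times GL_{d_s}$ acts on $Rep(Q,d_1)\times\dots\times Rep(Q,d_s)$; and — after unwinding the slope shift encoded in $r_j=-(\theta\cdot d_j)/(\alpha\cdot d_j)$ and the Weyl-invariant norm $\|\bullet\|_\alpha$ — the requirement that $\operatorname{gr}(x)$ be $L(\lambda_{d^*})$-semistable becomes precisely that each graded piece be $\theta$-semistable in King's sense (Definition~2.1). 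Thus $x\in S_{\lambda_{d^*}}$ iff $x$ carries a type-$d^*$ subrepresentation filtration with $\theta$-semistable graded pieces; the strict ordering of the $r_j$ corresponds to $\mu(d_1)>\dots>\mu(d_s)$, so such a filtration is a Harder--Narasimhan filtration (Definition~2.6), and by uniqueness (Theorem~2.7) it is \emph{the} HN filtration of $x$. Hence $S_{\lambda_{d^*}}=R_{d^*}$, and letting $d^*$ range over HN types gives the two stratifications.

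The step I expect to be the real obstacle is the translation in the previous paragraph: matching Hoskins' notion of $L(\gamma)$-semistability with respect to the shifted linearisation and the norm $\|\bullet\|_\alpha$ with plain $\theta$-semistability of the graded pieces, and verifying that $\lambda_{d^*}$ is genuinely \emph{adapted} (i.e.\ optimal among destabilising $1$-PS, not merely destabilising) for every $x\in R_{d^*}$; this is where the specific norm, the shift $r_j$, and the flag-direction conventions relating $\lambda_{d^*}$ to its inverse must be pinned down. An alternative to this abstract route is the explicit variational computation: extremising $\big(\sum_j w_j(\theta\cdot\beta^j)\big)/\sqrt{\sum_j w_j^2(\alpha\cdot\beta^j)}$ over $w_1>\dots>w_m$ is strictly convex and its stationary point sets $w_j$ proportional to the slope $(\theta\cdot\beta^j)/(\alpha\cdot\beta^j)$, the ordering constraint being satisfied iff the slopes strictly decrease; a Rudakov-type comparison of the HN filtration with competing subrepresentation filtrations, using King's inequalities on each semistable quotient, then shows the HN filtration attains the global optimum. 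The remaining points — that the hypotheses of \cite[Section~2]{hoskins2018stratifications} hold for the $\chi_\theta$-linearised affine action, and that working with a rational rather than integral $\lambda_{d^*}$ is harmless — are routine.
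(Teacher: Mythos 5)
This statement is quoted verbatim from Hoskins \cite[Theorem 3.8]{hoskins2018stratifications}; the paper gives no proof of its own, so the only meaningful comparison is with the argument in the cited source. Your sketch follows essentially that route: translate $1$-PS data into $I$-graded filtrations, note that $\lim_{t\to 0}\lambda(t)\cdot x$ exists iff $x$ is compatible with the associated flag with limit $\operatorname{gr}(x)$, identify Levi-semistability of the limit (after the character shift) with $\theta$-semistability of the graded pieces via King's criterion, and show by the norm-minimisation/convexity argument that the optimal destabilising class is the $\lambda_{d^*}$ built from the slopes $r_j=-(\theta\cdot d_j)/(\alpha\cdot d_j)$, so that the adapted $1$-PS of $x$ is exactly the one determined by its HN filtration. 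The step you flag as the real obstacle --- verifying adaptedness of $\lambda_{d^*}$ for every $x\in R_{d^*}$, i.e.\ that the HN filtration beats all competing filtrations in the Kempf functional, and pinning down the norm-weighting and sign conventions so that the stationary weights are the $r_j$ --- is precisely where the work in Hoskins' proof lies, so the sketch is an accurate outline rather than a complete argument, but the approach is the correct and standard one.
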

\hspace{-12pt}In particular, we have the following:
\begin{cor}
    If $\mu^{-1}(0)_{d^*}^{HE}$ denotes the Hesselink strata corresponds to $\lambda_{d^*}$, then
    $$\mu^{-1}(0)_{d^*}^{HE}=T_{d^*}^{HN}$$
\end{cor}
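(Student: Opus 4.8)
The plan is to deduce the claim directly from Theorem~6.1, applied not to the quiver $Q$ but to the quiver with relations $(\overline{Q},R)$ in which $R$ consists of the single preprojective relation $\rho=\sum_{h\in\Omega}(h\bar h-\bar h h)$. The first step is the identification of $Rep(\overline{Q},\alpha,R)$ with $\mu^{-1}(0)$ as closed $GL_\alpha(\mathbb C)$-subschemes of $Rep(\overline{Q},\alpha)$: this is immediate from the formula for the moment map in Section~2.1, since $\mu((x_h)_h)=\sum_{h\in\Omega}(x_hx_{\bar h}-x_{\bar h}x_h)$ vanishes precisely when $(x_h)_h$ satisfies $\rho$. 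Equivalently, $\mu^{-1}(0)=Rep(\Pi_Q,\alpha)$, where $\Pi_Q=k\overline{Q}/(\rho)$ is the preprojective algebra.

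The second step is to check that the Harder-Narasimhan stratification of $Rep(\overline{Q},\alpha,R)$ occurring in Theorem~6.1 is exactly the stratification $\{T_{d^*}^{HN}\}_{d^*}$ of $\mu^{-1}(0)$. For this one uses that a subspace of a $\Pi_Q$-module that is stable under every arrow of $\overline{Q}$ is automatically a $\Pi_Q$-submodule, since $\Pi_Q$ is generated as an algebra by the arrows and the vertex idempotents, and that the induced subquotient is again a $\Pi_Q$-module. Hence the unique Harder-Narasimhan filtration of any $V\in\mu^{-1}(0)$, formed inside $Rep(\overline{Q},\alpha)$, automatically consists of sub-$\Pi_Q$-modules with $\Pi_Q$-module subquotients, so its Harder-Narasimhan type in the quiver-with-relations sense agrees with its Harder-Narasimhan type as a representation of $\overline{Q}$. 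Therefore $Rep(\overline{Q},\alpha,R)_{d^*}^{HN}=Rep(\overline{Q},\alpha)_{d^*}^{HN}\cap\mu^{-1}(0)$, which equals $\mu^{-1}(0)_{d^*}^{HN}=T_{d^*}^{HN}$ by definition.

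The third step is to identify the Hesselink side. The character $\chi_\theta$, the group $GL_\alpha(\mathbb C)$, the norm $||\bullet||_\alpha$, and the rational $1$-PS $\lambda_{d^*}$ attached to a Harder-Narasimhan type $d^*$ depend only on $\theta$, $\alpha$ and $d^*$ (through the numbers $r_i=-(\theta\cdot d_i)/(\alpha\cdot d_i)$ and the vertex dimensions $\alpha_v$), with no reference to the arrows of the quiver; they are unchanged in passing from $Q$ to $\overline{Q}$. Consequently the Hesselink stratification of $Rep(\overline{Q},\alpha,R)=\mu^{-1}(0)$ with respect to $\chi_\theta$ and $||\bullet||_\alpha$ is exactly the stratification $\{\mu^{-1}(0)_{d^*}^{HE}\}_{d^*}$ of the statement, with the stratum indexed by $\lambda_{d^*}$ being $\mu^{-1}(0)_{d^*}^{HE}$. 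Applying Theorem~6.1 to $(\overline{Q},R)$ now gives $T_{d^*}^{HN}=R_{d^*}=S_{\lambda_{d^*}}=\mu^{-1}(0)_{d^*}^{HE}$, which is the assertion.

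The point requiring the most care is to make sure Theorem~6.1 really applies to $(\overline{Q},R)$: one must confirm that the affine $GL_\alpha(\mathbb C)$-scheme $\mu^{-1}(0)$, equipped with the linearisation induced by $\chi_\theta$, satisfies one of the hypotheses of \cite[Section~2]{hoskins2018stratifications} under which the Hesselink stratification exists and coincides with the Harder-Narasimhan stratification. By the first step, $\mu^{-1}(0)$ is a closed subscheme of the representation space of a quiver with relations, which is precisely the generality in which Theorem~6.1 is stated, so no work beyond citing it is needed; the remaining steps are bookkeeping, and the only genuine subtlety is the submodule observation of the second step.
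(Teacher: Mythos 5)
Your proposal is correct and follows essentially the same route as the paper, which states the corollary as an immediate consequence of Theorem~6.1 applied to the quiver with relations $(\overline{Q},R)$ given by the preprojective (moment map) relation, so that $Rep(\overline{Q},\alpha,R)=\mu^{-1}(0)$. Your additional verifications --- that subrepresentations of a $\Pi_Q$-module are again $\Pi_Q$-modules (so the HN type agrees with the one computed in $Rep(\overline{Q},\alpha)$, giving $T_{d^*}^{HN}$), and that $\chi_\theta$, $||\bullet||_\alpha$ and $\lambda_{d^*}$ are insensitive to the passage from $Q$ to $\overline{Q}$ --- are exactly the bookkeeping the paper leaves implicit.
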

\hspace{-12pt}Intuitively, in a smooth variety, removing a strata with high codimension should not affect the low degree cohomologies. In particular, we have the following result
\begin{thm}
    \cite{kirwan1984cohomology} If $X$ is a nonsingular variety, then
    $$\dim H^n_G(X,\mathbb C)=\sum_{\lambda}\dim H_G^{n-d(\lambda)}(S_{\lambda},\mathbb C)$$
    where $d(\lambda)$ is the real codimension of $S_{\beta}$ in $X$.
\end{thm}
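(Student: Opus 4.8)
The statement is precisely the \emph{equivariant perfection} of the Hesselink stratification (equivalently, by Theorem 6.1, of the Harder--Narasimhan stratification), and the plan is to follow the Atiyah--Bott/Kirwan argument. First I would fix a total order $0=\lambda_0,\lambda_1,\dots,\lambda_N$ on the finitely many strata refining the closure partial order, so that $\overline{S_{\lambda_i}}\subseteq\bigcup_{j\le i}S_{\lambda_j}$ and $S_{\lambda_0}=X^{ss}$ is the open stratum. Setting $U_i=\bigcup_{j\le i}S_{\lambda_j}$, the piece $U_{i-1}$ is open in $U_i$ and $S_{\lambda_i}$ is a smooth $G$-invariant closed subvariety of $U_i$ of real codimension $d(\lambda_i)$; since $X$ is nonsingular its normal bundle $N_i$ is an honest $G$-equivariant complex vector bundle, hence carries a canonical orientation and an equivariant Thom class, giving the Thom isomorphism $H^{*}_G(U_i,U_{i-1})\cong H^{*-d(\lambda_i)}_G(S_{\lambda_i})$ and the associated Thom--Gysin long exact sequence
\[
\cdots \to H^{n-d(\lambda_i)}_G(S_{\lambda_i}) \xrightarrow{\ (i_i)_!\ } H^n_G(U_i) \xrightarrow{\ j_i^*\ } H^n_G(U_{i-1}) \xrightarrow{\ \delta\ } H^{n-d(\lambda_i)+1}_G(S_{\lambda_i}) \to \cdots .
\]
The finiteness of the stratification and the smoothness of each stratum are exactly what the hypotheses of \cite[Section 2]{hoskins2018stratifications} supply.

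The key step is to show that every connecting map $\delta$ vanishes, so that each sequence breaks into short exact sequences $0\to H^{n-d(\lambda_i)}_G(S_{\lambda_i})\to H^n_G(U_i)\to H^n_G(U_{i-1})\to 0$. By the Atiyah--Bott criterion it suffices that the Gysin map $(i_i)_!$ be injective, and since $i_i^*\circ (i_i)_!$ is multiplication by the equivariant Euler class $e_G(N_i)\in H^{d(\lambda_i)}_G(S_{\lambda_i})$, it is enough to check that $e_G(N_i)$ is not a zero divisor in $H^*_G(S_{\lambda_i})$. For this I would use the structure of the Hesselink stratum: by Theorem 6.1 the stratum $S_{\lambda_i}$ carries the action of the central $1$-PS $\lambda_{d^*}$ of the relevant Levi, and by the Bialynicki-Birula/Hesselink description of the strata this circle acts on every fibre of $N_i$ with strictly positive weights. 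Writing $H^*_G(S_{\lambda_i})$ as a module over $H^*(B\mathbb{C}^*)=\mathbb{C}[u]$ through this circle, the class $e_G(N_i)$ becomes a polynomial in $u$ of degree $\operatorname{rk}N_i$ whose leading coefficient is the (nonzero scalar) product of the weights, hence a unit; a polynomial over a commutative ring with invertible leading coefficient is a non-zero divisor, which is what we need.

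Granting this, the conclusion follows by induction on $i$: each split short exact sequence gives $\dim H^n_G(U_i,\mathbb{C})=\dim H^n_G(U_{i-1},\mathbb{C})+\dim H^{n-d(\lambda_i)}_G(S_{\lambda_i},\mathbb{C})$, and summing for $i=1,\dots,N$ (base case $U_0=X^{ss}$, final case $U_N=X$) yields
\[
\dim H^n_G(X,\mathbb{C})=\sum_{i=0}^{N}\dim H^{\,n-d(\lambda_i)}_G(S_{\lambda_i},\mathbb{C}),
\]
with the $i=0$ term equal to $\dim H^n_G(X^{ss},\mathbb{C})$ since $d(\lambda_0)=0$; this is the stated identity. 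The main obstacle is the non-zero-divisor property of $e_G(N_i)$: it rests on the genuine geometric fact that the destabilising circle acts with positive weights on the normal directions to each stratum, which comes from the explicit construction of the Hesselink/Morse stratification rather than from formal properties of equivariant cohomology. A secondary technical point is that everything must be carried out in Borel equivariant cohomology for a possibly noncompact, non-affine $X$, so one should either use the quasi-projectivity of $X$ to model $H^*_G$ by finite-dimensional approximations, or work, as in \cite{kirwan1984cohomology}, only in the range of degrees where the relevant groups are finite-dimensional.
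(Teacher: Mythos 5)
The paper offers no proof of this statement: it is quoted verbatim as Kirwan's theorem and the citation to \cite{kirwan1984cohomology} is the whole argument, so there is nothing internal to compare against. Your proposal reconstructs the original Atiyah--Bott/Kirwan proof of equivariant perfection (Thom--Gysin sequences for the ordered strata, vanishing of the connecting maps via injectivity of the Gysin pushforwards, reduced to the equivariant Euler class of the normal bundle being a non-zero-divisor), and in outline it is correct. The one step you should tighten is the non-zero-divisor argument: saying that $H^*_G(S_{\lambda})$ is ``a module over $\mathbb{C}[u]$'' and then invoking the fact that a polynomial with invertible leading coefficient is a non-zero-divisor is not yet a proof, since an arbitrary $\mathbb{C}[u]$-module can have $u$-torsion and ``leading coefficient'' has no meaning there. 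The actual argument identifies $H^*_G(S_{\lambda})\cong H^*_{P_{\lambda}}(Y_{\lambda}^{ss})\cong H^*_{L_{\lambda}}(Z_{\lambda}^{ss})$, where the destabilising one-parameter subgroup is central in $L_{\lambda}$ and acts trivially on $Z_{\lambda}^{ss}$; rationally this makes $H^*_{L_{\lambda}}(Z_{\lambda}^{ss})$ a polynomial algebra in $u$ over $H^*_{L_{\lambda}/\mathbb{C}^*}(Z_{\lambda}^{ss})$, and only then does the leading-term (filtration) argument with the nonzero product of weights apply. With that identification supplied, your induction over the open unions $U_i$ gives exactly the displayed dimension formula, which is Kirwan's.
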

\hspace{-12pt}Indeed, $\mu^{-1}(0)$ is singular and we will try to get a similar formula in the next subsection.
\subsection{Hennecart's Linear Form} Again suppose $Q=(I,\Omega)$ is a quiver and suppose $\alpha\in\mathbb Z^I$, we say that $(\alpha^1,...,\alpha^l)\in \mathbb (Z^I)^l$ is a partition of $\alpha$ if $\alpha=\alpha^1+...+\alpha^l$.  Suppose $\alpha,\beta\in\mathbb Z^I$, we say that $\alpha\geq\beta$ if $\alpha-\beta\in\mathbb Z_{\geq 0}^I$ and $\alpha\neq \beta$. An ordered partition of $\alpha$ is a partition $\alpha^1,...,\alpha^l$ such that $\alpha^1\geq...\geq \alpha^l$. A double partition of $\alpha$ is a partition $(\alpha^1,...,\alpha^l)$ of $\alpha$, and an ordered partition $(d^{i,1},...,d^{i,s})$ of $\alpha^i$ for each $1\leq i\leq l$. Let $P(\alpha)$ be the set of double partitions of $\alpha$.
\\~\\ Recall that $\theta$ is a stability parameter for $\alpha$ such that $\theta\cdot\alpha=0$ and $\theta\cdot\beta<0$ for all $\beta<\alpha$, and $\mu$ is the associated slope function. Following \cite{reineke2003hn}, we say that a partition $d^*=(d^1,...,d^l)$ of $d$ is a HN-type for $\mu^{-1}(0)$ if $\mu^{-1}(0)_{d^*}^{HN}$ is nonempty.
\begin{lem}
    There exists $\underline{m}\in\mathbb Z^I$ such that for all $\underline{n}\geq\underline{m}$, $d^*=(d^1,...,d^l)$ is a HN-type if and only if 
    $$\mu(d^1)>...>\mu(d^l)$$
\end{lem}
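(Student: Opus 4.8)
The plan is to prove the two implications separately; only the ``$\Leftarrow$'' direction uses that $\underline n$ is large. For ``$\Rightarrow$'': if $d^*=(d^1,\dots,d^l)$ is a HN-type for $\mu^{-1}(0)$ then some $x\in\mu^{-1}(0)\subseteq Rep(\overline Q_{\underline n},\alpha)$ has Harder--Narasimhan filtration $0=V_0\subset\dots\subset V_l=x$ with $\dim(V_i/V_{i-1})=d^i$; by Definition~2.6 the slopes of the subquotients strictly decrease, so $\mu(d^1)>\dots>\mu(d^l)$, and $\sum_i d^i=\dim x=\alpha$ is part of ``$d^*$ has weight $\alpha$''. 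No hypothesis on $\underline n$ enters here. For ``$\Leftarrow$'' I would realise $d^*$ by a direct sum: assuming for the moment that for $\underline n$ large each $d^i$ is the dimension vector of a $\theta$-semistable representation $V^{(i)}$ (of slope $\mu(d^i)$) lying in the zero fibre $\mu^{-1}_{d^i}(0)\subseteq Rep(\overline Q_{\underline n},d^i)$, set $V=\bigoplus_{i=1}^l V^{(i)}$. The moment map of $\overline Q_{\underline n}$ is block-diagonal along direct sums, so $V\in\mu^{-1}_\alpha(0)$, and the partial-sum filtration
\[0\subset V^{(1)}\subset V^{(1)}\oplus V^{(2)}\subset\dots\subset V\]
has semistable subquotients $V^{(i)}$ with strictly decreasing slopes; by uniqueness of HN filtrations (Theorem~2.7) it \emph{is} the HN filtration of $V$, so $V\in\mu^{-1}(0)^{HN}_{d^*}$. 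As $\alpha$ is fixed there are only finitely many partitions $d^*$ and finitely many $\gamma\le\alpha$, so one threshold $\underline m$ serves all.

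The heart of the argument is producing the $V^{(i)}$. Fix $\gamma=d^i$ and first suppose its support is connected with at least two vertices. The mechanism is the one of Sections~3--4 (classically, of Kac): multiplying the arrows makes $\dim\mu^{-1}_\gamma(0)$ grow linearly in $\underline n$. Indeed the Crawley--Boevey inequality $p(\gamma)\ge\sum_t p(\beta^{(t)})$ over decompositions $\gamma=\sum_t\beta^{(t)}$ into positive roots holds for $\underline n\gg 0$, because the $\underline n$-linear part of $p(\gamma)-\sum_t p(\beta^{(t)})$ equals $\sum_{h\in\Omega}n_h\bigl(\gamma_{s(h)}\gamma_{t(h)}-\sum_t\beta^{(t)}_{s(h)}\beta^{(t)}_{t(h)}\bigr)\ge 0$, with a strictly positive term present when the decomposition is nontrivial and $\gamma$ is connected; so Theorem~2.5 gives $\dim\mu^{-1}_\gamma(0)=\gamma\cdot\gamma-1+2p_{Q_{\underline n}}(\gamma)$. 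On the other hand, for each destabilising dimension subvector $0<\delta<\gamma$ (i.e. $\mu(\delta)>\mu(\gamma)$), the locus of $x\in\mu^{-1}_\gamma(0)$ admitting a subrepresentation of dimension $\delta$ is parametrised by a two-step flag of type $(\delta,\gamma-\delta)$ (an $\underline n$-independent contribution) together with a compatible representation in $\mu^{-1}(0)$; using the exact sequence of Proposition~3.6 and the flatness of $\mu$ to compute that fibre, the off-block arrow contributions lost by imposing compatibility with the flag are not recovered by imposing $\mu=0$, so this locus has strictly smaller $\underline n$-leading coefficient. Hence for $\underline n$ large the generic $x\in\mu^{-1}_\gamma(0)$ has no destabilising subrepresentation and is $\theta$-semistable. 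If $\gamma=\gamma_v e_v$ is concentrated at one vertex $v$, then (no loops in $\overline Q$) $k^{\gamma_v}$ at $v$ already lies in $\mu^{-1}_\gamma(0)$ and is a direct sum of copies of the simple $S_v$, hence semistable of slope $\theta_v=\mu(\gamma)$; for disconnected support one applies these two cases componentwise.

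The main obstacle is the dimension comparison just sketched: one must verify, \emph{uniformly} in $\gamma$ and $\delta$, that the locus in $\mu^{-1}_\gamma(0)$ of representations compatible with a nontrivial two-step flag has $\underline n$-leading coefficient strictly below that of $\mu^{-1}_\gamma(0)$. This is precisely where the machinery of Sections~3--4 is needed (Proposition~3.6, Corollary~3.7, the dimension formula of Theorem~4.6 applied to a two-step flag, and the flatness of $\mu$ from Theorem~2.5); once each individual bound is in hand, taking $\underline m$ coordinatewise to be the maximum of the finitely many thresholds closes the argument. A secondary subtlety is the disconnected-support case: the componentwise direct sum is $\theta$-semistable of slope $\mu(\gamma)$ only when $\mu(\gamma|_C)=\mu(\gamma)$ for every connected component $C$ of the support of $\gamma$; one should either restrict to partitions whose parts have connected support, or observe that if $\mu(\gamma|_C)\neq\mu(\gamma)$ for some $C$ then $\gamma=d^i$ can never occur as an HN subquotient, so $d^*$ is not a HN-type and the equivalence holds vacuously on that side.
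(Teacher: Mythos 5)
Your proof follows essentially the same route as the paper: the forward direction is immediate from the definition of the HN filtration, and the converse is obtained by realizing $d^*$ as a direct sum of $\theta$-semistable representations lying in $\mu^{-1}(0)$ for each $d^i$, whose existence for $\underline{n}$ large is secured by a dimension/codimension count in which the Euler-form (edge) terms grow linearly in $\underline{n}$ — exactly the paper's appeal to the Section 5 results and the argument of Reineke's Corollary 3.5, which you merely spell out via Crawley--Boevey's flatness theorem and the destabilising-locus comparison. The one slip is your closing remark: if some $d^i$ has disconnected support whose components have unequal slopes, then no semistable representation of dimension $d^i$ exists for any $\underline{n}$, so the ``$\Leftarrow$'' implication genuinely fails for such $d^*$ rather than ``holding vacuously''; this is a real edge case, but it is one the paper's own proof (where the cross Euler-form terms do not tend to infinity) silently ignores as well, so apart from that phrasing your argument matches the intended one.
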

\begin{proof}
    By definition if $d^*$ is a $HN$-type then $\mu(d^1)>\mu(d^2)>...>\mu(d^l)$.
    \\~\\ Conversely, suppose $\mu(d^1)>\mu(d^2)>...>\mu(d^l)$. By the result in section 5 and the same argument as \cite[Corollary 3.5]{reineke2003hn}, there exists a semistable representation of $\mu^{-1}(0)$ for the dimension vector $d^i$ if for all partition $d^i=\beta_1+...+\beta_k$,
    $$\dim End(\beta_i)-\sum_{i<j}\langle \beta_i,\beta_j\rangle\geq 0 $$
    However, as we increase the edges, the Euler form term in the above expression tends to infinity.
    \\~\\ Therefore, for sufficiently large $\underline{n}$, we can pick $s_1,...,s_l$ such that $s_i\in \mu^{-1}(0)$ for the dimension vector $d^i$. Then $s_1\oplus...\oplus s_l$ will be an element in $\mu^{-1}(0)$ for $Rep(Q,\alpha)$ with HN-type $d^*$.
    
\end{proof}
\hspace{-12pt}Now let $R(\alpha)$ denote the set of double partitions such that $\mu(d^{i,j})$ are pairwise distinct, and let $S(\alpha)$ denote the set of double partitions such that some two $\mu(d^{i,j})s$ are equal.
We can naturally associate to each ordered partition a partition of $\alpha$. We simply order all $d^{i,j}$ by their value under $\mu$. By Lemma 7.4, elements of $R(\alpha)$ correspond to HN types under this association. Let $T(\alpha)$ be the set of $HN$-types and $U(\alpha)$ be the set of partition $(d^1,...,d^l)$ of $\alpha$ such that $\mu(d^1)\geq...\geq \mu(d^l)$ but some of the inequalities are not strict.
\\~\\
We recall the Hua's Formula, we will first need to define a function $\phi$. Suppose $\pi$ is a partition of an integer $n$ given by $n=n_1+...+n_k$, define $\phi_{\pi}(q)=\prod_{i=1}^k\prod_{j=1}^{n_j}(1-q^i)$. Suppose we are given a family of partitions $\pi=(\pi_i)_{i\in I}$ indexed by the vertex set $I$ of the quiver $Q$, we define
$$\phi(\pi)(q)=\prod_{i\in I}\phi_i(q)$$

\begin{thm} \cite[Theorem 4.6]{hua2000counting},\cite[Theorem 4.1]{zveryk2025stabilisation} Let $\alpha$ be an indivisible vector. $A_{\alpha}(q)$ is given by the following formula.
$$A_{\alpha}(q)=(q-1)\sum_{l=1}^{\infty}\frac{(-1)^{l+1}}
{l}\sum_{\substack{(\alpha^1,...,\alpha^l)\\\alpha=\alpha^1+...+\alpha^l}}\prod_{i=1}^l\left[\sum_{\substack{\alpha^i=d^{i,1}+...+d^{i,s}\\d^{i,1}\geq...\geq d^{i,s}}}\frac{q^{-\sum_k\langle d^{i,k},d^{i,k}\rangle}}{\prod_k\phi_{d^{i,k}-d^{i,k+1}}(q^{-1})}\right]$$
\end{thm}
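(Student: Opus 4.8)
The plan is to reduce this to a counting statement over $\mathbb F_q$ and then invoke Burnside's lemma (the orbit-counting theorem) applied to the action of the finite group $GL_\alpha(\mathbb F_q)$ on $Rep(Q,\alpha)(\mathbb F_q)$, exactly as in \cite{hua2000counting}. The starting point is the standard identity relating absolutely indecomposable representations to all representations: if $a_\alpha(q)$ denotes the number of isomorphism classes of representations of dimension $\alpha$ weighted suitably, one has a plethystic/exponential relation between the generating series of $A_\alpha$ and that of the groupoid count $\sum_\alpha \frac{|Rep(Q,\alpha)(\mathbb F_q)|}{|GL_\alpha(\mathbb F_q)|}x^\alpha$. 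Taking the logarithm produces the alternating sum $\sum_l \frac{(-1)^{l+1}}{l}$ over ordered tuples $(\alpha^1,\dots,\alpha^l)$ summing to $\alpha$; the factor $(q-1)$ in front accounts for passing from the groupoid count to the count of absolutely indecomposables (the $\mathbb C^\times$ of scalar automorphisms), and indivisibility of $\alpha$ is what makes this clean.

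The heart of the matter is then to compute $|Rep(Q,\beta)(\mathbb F_q)|/|GL_\beta(\mathbb F_q)|$, or rather the relevant refined version, via Burnside's lemma: this equals $\frac{1}{|GL_\beta(\mathbb F_q)|}\sum_{g\in GL_\beta(\mathbb F_q)}|Rep(Q,\beta)(\mathbb F_q)^g|$, where $Rep(Q,\beta)^g$ is the fixed locus of the conjugation action of $g$. First I would fix a conjugacy class of $g=(g_i)_{i\in I}$; its $GL_\beta$-conjugacy data is recorded by, for each $i$, a function assigning to each irreducible polynomial over $\mathbb F_q$ a partition, and summing dimensions gives $\beta_i$. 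The fixed-point count $|Rep(Q,\beta)^g|$ is a product over edges $h:i\to j$ of $q^{\dim \mathrm{Hom}_{\mathbb F_q[g]}(\ldots)}$, which by a classical linear-algebra computation (commuting with a fixed pair of endomorphisms) is $q$ raised to a bilinear expression in the partition data. Simultaneously $|Z_{GL_\beta}(g)|$, the centralizer order, is a product of the usual polynomial $\prod$ of $(1-q^{-k})$-type factors — this is precisely where the function $\phi_\pi$ enters. Collecting the exponents of $q$, the edge contributions assemble into $-\langle\beta,\beta\rangle$-type terms, and regrouping the conjugacy data by the underlying partition $d^{i,1}\geq\cdots\geq d^{i,s}$ of $\beta$ (the partition of the $\beta_i$'s read off from the $g$-action, ordered by decreasing part) yields exactly the bracketed inner sum $\sum_{d^{i,1}\geq\cdots} q^{-\sum_k\langle d^{i,k},d^{i,k}\rangle}/\prod_k\phi_{d^{i,k}-d^{i,k+1}}(q^{-1})$.

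The main obstacle I anticipate is the bookkeeping in the Burnside computation: correctly translating the centralizer-order formula and the edge fixed-point formula into the $q$-exponents $-\langle d^{i,k},d^{i,k}\rangle$ and the denominators $\phi_{d^{i,k}-d^{i,k+1}}(q^{-1})$, in particular verifying that the cross-terms between different irreducible polynomials in the $g$-data cancel out of the final answer (so that only the partition-of-$\beta_i$ data survives) and that the substitution $q\mapsto q^{-1}$ lands in the right place. This is essentially the content of \cite[Theorem 4.6]{hua2000counting}, and the cleanest route is to cite that computation directly, noting as in \cite[Theorem 4.1]{zveryk2025stabilisation} that the indivisibility hypothesis removes the loop/torsion subtleties and that passing to the reciprocal polynomial (via Theorem 2.8 / Kac's polynomiality, degree $1-\langle\alpha,\alpha\rangle$) is what produces the displayed normalization. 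I would present the proof as: (i) recall the exponential relation and take $\log$; (ii) apply Burnside to each $Rep(Q,\beta)(\mathbb F_q)$; (iii) evaluate centralizer orders and edge fixed-point counts; (iv) regroup by partition data to obtain the stated bracket; (v) reconcile the $(q-1)$ prefactor and the reciprocal via indivisibility and Kac polynomiality.
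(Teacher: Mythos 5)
The paper does not actually prove this statement: it is quoted from Hua and Zveryk, and Remark 2.16 records precisely the proof mechanism you describe (Burnside's lemma applied to the conjugation action of $GL_{\alpha}(\mathbb F_q)$ on representations over $\mathbb F_q$, with indivisibility of $\alpha$ allowing the ordinary logarithm expansion $\sum_l\frac{(-1)^{l+1}}{l}$ in place of the plethystic one), so your proposal follows essentially the same route as the paper, namely deferring the combinatorial bookkeeping to \cite[Theorem 4.6]{hua2000counting} and \cite[Theorem 4.1]{zveryk2025stabilisation}. The one imprecision is in your step (v): the $\phi_{\bullet}(q^{-1})$ factors and the $(q-1)$ prefactor arise from the centralizer orders in $GL_{\beta}(\mathbb F_q)$ and from the exponential relation between isomorphism-class counts and absolutely indecomposable counts, not from passing to the reciprocal polynomial of $A_{\alpha}(q)$ via Kac's degree bound --- but since you cite Hua's computation for exactly these steps, this does not affect the argument.
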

\hspace{-12pt} Suppose $\gamma\in\mathbb N_{\geq 0}^I$, we define 
$$b(\gamma)=\sum_{i\in I}\frac{\gamma_i(\gamma_i+1)}{2}$$
Then we can equivalently write the Hua's formula as
$$A_{\alpha}(q)=(q-1)\sum_{l=1}^{\infty}\frac{(-1)^{l+1}}
{l}\sum_{\substack{(\alpha^1,...,\alpha^l)\\\alpha=\alpha^1+...+\alpha^l}}\prod_{i=1}^l\left[\sum_{\substack{\alpha^i=d^{i,1}+...+d^{i,s}\\d^{i,1}\geq...\geq d^{i,s}}}(-1)^s\frac{q^{-\sum_k\langle d^{i,k},d^{i,k}+b(d^{i,k}-d^{i,k+1})\rangle}}{\prod_k\phi_{d^{i,k}-d^{i,k+1}}(q)}\right]$$
\begin{lem}
Suppose $\pi\in P(\alpha)$, corresponding to $\alpha=\alpha^1+...+\alpha^l$ and an ordered partition $(d^{i,1},...,d^{i,s})$ for each $\alpha^i$, then the polynomial $\prod_i\prod_k\phi_{d^{i,k}-d^{i,k+1}}(q)$ divides $\phi_{\alpha}(q)$.
\end{lem}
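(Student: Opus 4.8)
The plan is to reduce the divisibility claim to a purely combinatorial statement about the partitions $\lambda_i := d^{i,k} - d^{i,k+1}$ (for fixed vertex $i$, ranging over $k=1,\dots,s$, with the convention $d^{i,s+1}=0$) that arise from the double partition $\pi$. Recall that $\phi_{\mu}(q)$ for a partition $\mu$ of an integer $n$ is, up to reindexing, a product of cyclotomic-type factors $(1-q^j)$ with prescribed multiplicities; and $\phi_\alpha(q) = \prod_{i\in I}\phi_{\alpha_i}(q)$. Since everything factors over the vertex set $I$, it suffices to fix a single vertex $i$ and show that $\prod_{k}\phi_{d^{i,k}-d^{i,k+1}}(q)$ divides $\phi_{\alpha_i}(q)$ in $\mathbb Z[q]$. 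So the problem becomes: given a partition $\alpha^1_i \ge \alpha^2_i \ge \cdots$ summing to $\alpha_i$ (the $i$-components of the $\alpha^j$), and for each of these a refinement into an ordered tuple $d^{\cdot,1}\ge\cdots\ge d^{\cdot,s}$, show that the combined multiset of "gap partitions" $\{d^{i,k}-d^{i,k+1}\}$ gives a $\phi$-product dividing $\phi_{\alpha_i}$.

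The key observation I would exploit is the following multiplicativity/monotonicity property of $\phi$: if $\mu$ is a partition of $m$ and $\nu$ is a partition of $n$, then $\phi_\mu(q)\,\phi_\nu(q)$ divides $\phi_{\mu\sqcup\nu}(q)$, where $\mu\sqcup\nu$ denotes the partition of $m+n$ obtained by concatenating and re-sorting the parts — because the exponent multiplicities in $\phi_{\mu\sqcup\nu}$ are at least the sum of those in $\phi_\mu$ and $\phi_\nu$. (Concretely, $\phi$ of a partition with parts $m_1\ge m_2\ge\cdots$ involves the factor $(1-q^j)$ with multiplicity equal to $\#\{a : m_a \ge j\}$, or some such count; merging parts only increases each such count.) Granting this, the whole statement follows by induction: one shows step by step that the product over all gap partitions, as $\pi$ is built up, is always $\phi$ of some partition $\rho$ of $\alpha_i$ satisfying $\rho \preceq$ (the partition $(\alpha_i)$ with one part), hence $\phi_\rho(q)\mid\phi_{(\alpha_i)}(q) = \phi_{\alpha_i}(q)$.

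Concretely I would proceed in these steps. \textbf{Step 1:} Record the explicit formula for $\phi_\mu(q)$ in terms of part-counting and extract the multiplicity of each factor $(1-q^j)$. \textbf{Step 2:} Prove the merging lemma $\phi_\mu\,\phi_\nu \mid \phi_{\mu\sqcup\nu}$ by comparing these multiplicities factor-by-factor; this is the technical heart. \textbf{Step 3:} For a fixed vertex $i$ and a fixed $\alpha^j$, observe that the gap partition multiset $\{d^{i,k}-d^{i,k+1}\}_k$ has the property that $\phi$ of their merge divides $\phi_{(\alpha^j_i)}$ — essentially because $\sum_k (d^{i,k}-d^{i,k+1}) = d^{i,1} \le \alpha^j_i$ and the parts are "spread out" no more than a single block of size $\alpha^j_i$; apply Step 2 repeatedly. \textbf{Step 4:} Merge across $j=1,\dots,l$ using Step 2 again, noting $\sum_j \alpha^j_i = \alpha_i$, to conclude that the full product over this vertex divides $\phi_{\alpha_i}(q)$. \textbf{Step 5:} Take the product over all $i\in I$ to obtain divisibility of $\phi_\alpha(q)$.

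The main obstacle I anticipate is Step 2 — getting the factor-multiplicity comparison exactly right. The definition of $\phi_\pi$ in the excerpt ($\prod_{i=1}^k\prod_{j=1}^{n_j}(1-q^i)$, with what appears to be a typo in the index range) needs to be pinned down precisely before the multiplicity bookkeeping can be made rigorous; once the correct combinatorial description of the exponent multiplicities is in hand, the inequality "multiplicities add under merging, and are bounded by the single-block count" should be a routine but careful count. A secondary subtlety is making sure that the $d^{i,k}$ being only weakly decreasing (so some gaps $d^{i,k}-d^{i,k+1}$ may be zero, contributing trivial factors $\phi_0 = 1$) causes no problem — it does not, since $\phi_0(q)=1$ divides everything.
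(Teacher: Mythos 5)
Your overall skeleton is the right reduction and matches what the lemma needs: work vertex by vertex since $\phi_\gamma(q)=\prod_{v\in I}\phi_{\gamma_v}(q)$, use the telescoping $\sum_k(d^{i,k}-d^{i,k+1})=d^{i,1}\le\alpha^i$, and use $\phi_m(q)\mid\phi_n(q)$ for $m\le n$. But the technical heart, your Step 2, does not hold up as stated. If $\mu\sqcup\nu$ means concatenating and re-sorting parts, then $\phi_{\mu\sqcup\nu}=\phi_\mu\phi_\nu$ identically, so your ``merging lemma'' is an empty equality and can never carry you from $\prod_k\phi_{d^{i,k}-d^{i,k+1}}$ (a $\phi$ with many parts) to $\phi_{\alpha_v}=\prod_{j=1}^{\alpha_v}(1-q^j)$, which is the $\phi$ of a \emph{single} part. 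What is actually needed is the strictly stronger fact that combining parts by \emph{addition} preserves divisibility, namely $\phi_{n_1}(q)\cdots\phi_{n_k}(q)\mid\phi_{n_1+\cdots+n_k}(q)$, and your proposed justification --- comparing the multiplicity of each factor $(1-q^j)$ --- fails for this operation: that multiplicity can strictly drop when two parts are added into one (the factor $(1-q)$ occurs twice in $\phi_1(q)^2$ but only once in $\phi_2(q)$), and no dominance-type monotonicity rescues it in general (e.g.\ $\phi_2(q)^2\nmid\phi_3(q)\phi_1(q)$ even though $(3,1)$ dominates $(2,2)$). So the bookkeeping in Steps 2--3 would not close, even though the conclusion you want there is true.

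The missing ingredient is the $q$-multinomial fact: $\phi_n(q)/\prod_j\phi_{n_j}(q)$ is the Gaussian multinomial coefficient, a polynomial because the multiplicity of the cyclotomic factor $\Phi_d$ in $\phi_n(q)=\prod_{j=1}^n(1-q^j)$ is $\lfloor n/d\rfloor$ and $\lfloor(\sum_j n_j)/d\rfloor\ge\sum_j\lfloor n_j/d\rfloor$ --- the same floor inequality that proves $n!/(n_1!\cdots n_k!)\in\mathbb Z$ (alternatively, induct with the $q$-Pascal identity). This is exactly the route the paper takes: it reduces, as you do, first to trivial ordered partitions and then to the decomposition $\alpha=\alpha^1+\cdots+\alpha^l$, and then invokes the $q$-analogue of multinomial integrality via $q^a-1\mid q^b-1$ for $a\mid b$. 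If you replace your Step 2 by this cyclotomic count, the rest of your outline (including the harmless zero-gap remark) goes through.
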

\begin{proof}
We can assume that all order partitions of $\alpha^i$ is trivial. In that case, we only need to prove that if $\alpha=\alpha^1+...+\alpha^l$, then 
$\phi_{\alpha}(q)$ is a multiple of $\phi_{\alpha^1}(q)...\phi_{\alpha^l}(q)$.
\\~\\ However, as $q^a-1\mid q^b-1$ if $a\mid b$, this follows easily from the fact that for any integer $n$ and $n=n_1+...+n_k$,
$$\frac{n!}{(n_1)!...(n_k)!}$$
is an integer, which is a consequence of the generalized binomial theorem.
\end{proof}
\hspace{-13pt}As a result, we have
$$\frac{\phi_{\alpha}(q)A_{\alpha}(q)}{q-1}=\sum_{d^*\in T(\alpha)}P_{d^*}(q)+\sum_{d^*\in U(\alpha)}P_{d^*}(q)$$
where $P_{d^*}(q)$ is a polynomial in $q$. For each $d^*\in T(\alpha)$, let $l(d^*)$ be the degree of $P_{d^*}(q)$, and write $P_{d^*}(q)=q^{l(d^*)}P_{d^*}'(q)$. Recall the notation of $Q_{\underline{n}}$ in section 4.1. We notice that $P_{d^*}'(q)$ does not depend on $\underline{n}$ by the construction of the Hua's formula. 

\begin{thm}Let $d^*\in T(\alpha)$ corresponding representation type of $\alpha$, then if the condition in Corollary 5.5 is satisfied, then
$$l(\alpha)-l(d^*)=\text{codim}_{\mu^{-1}(0)}\mu^{-1}(0)_{d^*}^{HN}$$
\end{thm}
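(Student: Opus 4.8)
The plan is to compare two expressions for the quantity $l(\alpha)-l(d^*)$: one coming from the combinatorics of Hua's formula (the left side), and one coming from the geometry of the moment-map variety (the right side, computed in Section 5). First I would isolate, for a fixed HN-type $d^*=(d^1,\dots,d^s)$, the monomial $q^{l(d^*)}$ inside $P_{d^*}(q)$. From the reformulated Hua's formula, the term indexed by $d^*$ (now thought of as a double partition with trivial inner partitions, so $s=l$ and $\alpha^i=d^i$) contributes, after multiplying by $\phi_\alpha(q)$, a polynomial whose degree is governed by $-\sum_k\langle d^k,d^k\rangle$ together with the degree of $\phi_\alpha(q)/\prod_k\phi_{d^k}(q)$ (using Lemma 7.6 so this is an honest polynomial). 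Since $l(\alpha)$ is the analogous degree for the trivial partition $d^*=(\alpha)$, the difference $l(\alpha)-l(d^*)$ telescopes into $\bigl(\text{something involving }\langle\alpha,\alpha\rangle - \sum_k\langle d^k,d^k\rangle\bigr)$ minus $\deg\bigl(\phi_\alpha/\prod_k\phi_{d^k}\bigr)$. The degree of $\phi_\alpha(q)$ is $\sum_{i\in I}\sum_{j=1}^{\alpha_i} j\cdot(\text{multiplicity})$, which one can rewrite in closed form; the key point is that $\deg\phi_\alpha - \sum_k\deg\phi_{d^k}$ equals a quadratic expression in the $\alpha_i$ and $d^k_i$ that I expect, after the dust settles, to be exactly $\dim GL_\alpha - \dim P_{d^*} = \dim \mathrm{End}(\alpha)-\dim\mathrm{End}(\alpha)_{F^*}$ up to the $\langle\cdot,\cdot\rangle$ corrections.

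Next I would bring in the geometric side. By the codimension computation at the end of Section 5,
$$\operatorname{codim}_{\mu^{-1}(0)}T_{d^*}^{HN} = \operatorname{codim}_{Rep(\overline Q,\alpha)}R_{d^*}^{HN} - \bigl(\dim\mathrm{End}(\alpha)-\dim\mathrm{End}(\alpha)_{F^*}\bigr),$$
and $\operatorname{codim}_{Rep(\overline Q,\alpha)}R_{d^*}^{HN} = -\sum_{k<l}\langle d^k,d^l\rangle_{\overline Q} = -2\sum_{k<l}\langle d^k,d^l\rangle_Q + 2\sum_{k<l}(\text{edge contribution})$; more usefully, $\langle v,w\rangle_{\overline Q}=\langle v,w\rangle_Q+\langle w,v\rangle_Q-(\text{correction for }I)$, so I would just write $\langle\cdot,\cdot\rangle_{\overline Q}$ in terms of the symmetrized form. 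Using $\langle\alpha,\alpha\rangle_{\overline Q}=2\langle\alpha,\alpha\rangle_Q-\alpha\cdot\alpha$ and the fact that $\sum_k d^k=\alpha$ forces $\langle\alpha,\alpha\rangle=\sum_k\langle d^k,d^k\rangle+2\sum_{k<l}\langle d^k,d^l\rangle$, one reduces the right-hand side purely to the quantities $\langle\alpha,\alpha\rangle$, $\sum_k\langle d^k,d^k\rangle$, $\alpha\cdot\alpha$, $\sum_k d^k\cdot d^k$, and $\dim\mathrm{End}(\alpha)_{F^*}$. Then I match this, term by term, against the expression obtained in the first paragraph from Hua's formula, using that $\dim\mathrm{End}(\alpha)=\alpha\cdot\alpha$ and $\dim\mathrm{End}(\alpha)_{F^*}=\sum_{i}\bigl(\text{sum of products of flag-jump dimensions}\bigr)$, which also equals $\sum_k d^k\cdot d^k + \sum_{k<l} 2\,d^k\cdot d^l$ restricted appropriately — i.e. $\dim\mathrm{End}(\alpha)_{F^*}$ counts block-lower-triangular matrices, so $\dim\mathrm{End}(\alpha)-\dim\mathrm{End}(\alpha)_{F^*}=\sum_{k<l}d^k\cdot d^l$.

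The main obstacle I anticipate is the bookkeeping in the first paragraph: correctly extracting the \emph{leading} degree of the rational-function summand in Hua's formula after clearing denominators, and confirming that no cancellation across different terms of the $l$-sum or across double partitions lowers the degree $l(d^*)$ below the naive estimate. This is where the hypotheses of Corollary 5.5 (enough edges) must be used: for $\underline n$ large the $q^{-\sum_k\langle d^{i,k},d^{i,k}\rangle}$ exponents are dominated by the edge count, so the term attached to the \emph{trivial} inner partition of each $\alpha^i$ strictly dominates all refinements, and similarly the genuine HN-types in $T(\alpha)$ are separated in degree from the degenerate ones in $U(\alpha)$; hence $l(d^*)$ really is read off from the single monomial described above and the $U(\alpha)$ contributions do not interfere. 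Granting that degree-domination, the remaining work is the purely formal identity between the two quadratic expressions, which I would verify by substituting $\langle v,w\rangle = v\cdot w - \sum_{h\in\Omega}v_{s(h)}w_{t(h)}$ and checking that the edge-dependent parts on both sides agree (they must, since both count the same off-diagonal Hom-spaces in $Rep(\overline Q,\alpha)$) and that the constant, $\underline n$-independent parts agree as well, the latter being precisely the content of the last display of Section 5.
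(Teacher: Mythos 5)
Your overall architecture is the same as the paper's: read off $l(\alpha)-l(d^*)$ from the degrees in Hua's formula, use the Section~5 comparison $\operatorname{codim}_{\mu^{-1}(0)}T^{HN}_{d^*}=\operatorname{codim}_{Rep(\overline Q,\alpha)}\overline R^{HN}_{d^*}-(\dim End(\alpha)-\dim End(\alpha)_{F^*})$ together with Reineke's codimension formula, and close with a bilinear identity for the Euler form. However, the degree extraction in your first paragraph contains a genuine error that breaks the final match. In Hua's formula the denominators are $\phi_{d^{i,k}-d^{i,k+1}}(q^{-1})$, not $\phi_{d^{i,k}-d^{i,k+1}}(q)$; since $\phi_\gamma(q^{-1})\to 1$ as $q\to\infty$ (equivalently, in the rewritten formula the power $q^{b(d^{i,k}-d^{i,k+1})}$ exactly cancels $\deg\phi_{d^{i,k}-d^{i,k+1}}(q)$), these denominators contribute degree zero. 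Hence $l(d^*)=\deg\phi_\alpha-1-\sum_k\langle d^k,d^k\rangle$ and $l(\alpha)-l(d^*)=\sum_k\langle d^k,d^k\rangle-\langle\alpha,\alpha\rangle=-\sum_{m<n}(d^m,d^n)$, with no $\deg\bigl(\phi_\alpha/\prod_k\phi_{d^k}\bigr)$ term at all. Your formula, which subtracts $\sum_k\deg\phi_{d^k}$, would instead give $l(\alpha)-l(d^*)=-\sum_{m<n}(d^m,d^n)-\sum_{m<n}d^m\cdot d^n$, which differs from $\operatorname{codim}_{\mu^{-1}(0)}\mu^{-1}(0)^{HN}_{d^*}=-\sum_{m<n}(d^m,d^n)$ by exactly $\dim End(\alpha)-\dim End(\alpha)_{F^*}=\sum_{m<n}d^m\cdot d^n$ --- precisely the correction you hoped the $\phi$-quotient would supply. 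So the term-by-term matching you plan would fail unless the $b(\cdot)$ bookkeeping is restored first.

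Two further points. The identity $\langle\alpha,\alpha\rangle=\sum_k\langle d^k,d^k\rangle+2\sum_{k<l}\langle d^k,d^l\rangle$ is false for the (non-symmetric) Euler form; what is needed, and what the paper's final chain of equalities uses, is $\langle\alpha,\alpha\rangle=\sum_k\langle d^k,d^k\rangle+\sum_{k<l}(d^k,d^l)$ together with $(v,w)_Q=\langle v,w\rangle_{\overline Q}+v\cdot w$ and $\dim End(\alpha)-\dim End(\alpha)_{F^*}=\sum_{k<l}d^k\cdot d^l$. Also, the hypothesis of Corollary~5.5 is not used where you place it: all double partitions contributing to a fixed $P_{d^*}$ have the same multiset of parts $\{d^1,\dots,d^s\}$, hence the same exponent $-\sum_{i,k}\langle d^{i,k},d^{i,k}\rangle$ and the same degree, so no ``edge-count domination'' separates the trivial inner partition from its refinements (possible cancellation of leading coefficients is a separate issue that the paper does not address either). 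The many-edges hypothesis enters on the geometric side, to guarantee the Section~5 dimension formula $\dim T^{HN}_{d^*}=1+\dim R^{HN}_{d^*}+C$ and hence the codimension comparison you quote at the start of your second paragraph.
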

\begin{proof}
By the result in section 5.1, we have that 
\begin{align*}
    \text{codim}_{\mu^{-1}(0)}\mu^{-1}(0)_{d^*}^{HN}&=\text{codim}_{Rep(\overline{Q},\alpha)}\overline{R}_{d^*}^{HN}-\dim End(\alpha)+\dim End(\alpha)_{F^*}\hspace{20pt}(1)
\end{align*}
By [Rei03],
$$\text{codim}_{Rep(\overline{Q},\alpha)}\overline{R}_{d^*}^{HN}=-\sum_{k<l}\langle d^k,d^l\rangle_{\overline{Q}}$$
Meanwhile, suppose $\pi\in P(\alpha)$ is given by a partition $\alpha=\alpha^1+...+\alpha^l$ and an ordered partition $\alpha^i=d^{i,1}+...+d^{i,s}$ for each $\alpha^i$, then the degree of 
$$\frac{\phi_{\alpha}(q)}{q-1}\cdot \frac{q^{-\sum_{k}(\langle d^{i,k},d^{i,k}\rangle+b(d^{i,k}-d^{i,k+1}))}}{\prod_{k}\phi_{d^{i,k}-d^{i,k+1}}(q)}$$ is given by 
\begin{align*}
    &\hspace{13pt}\deg \phi_{\alpha}(q)-\sum \deg\phi_{d^{i,k}-d^{i,k+1}}(q)+\sum\deg \phi_{d^{i,k},d^{i,k+1}}(q)-\sum_{i}\sum_{k}\langle d^{i,k},d^{i,k}\rangle\\&=\deg \phi_{\alpha}(q)-\sum_{i}\sum_{k}\langle d^{i,k},d^{i,k}\rangle
\end{align*}
Therefore, it suffices to show that if $\alpha=\alpha^1+...+\alpha^l$, which corresponds to a HN type $d^*$ and a flag $F^*$ then 
$$-\langle\alpha,\alpha\rangle+\sum_{i=1}^l\langle \alpha^i,\alpha^i\rangle=-\sum_{1\leq m<n\leq l}\langle \alpha^i,\alpha^i\rangle_{\overline{Q}}-\dim End(\alpha)+\dim End(\alpha)_{F^*}$$
To show this we notice that
\begin{align*}
    -\langle\alpha,\alpha\rangle+\sum_{i=1}^l\langle\alpha^i,\alpha^i\rangle&=-\sum_{1\leq m<n\leq l}(\langle\alpha^m,\alpha^n\rangle_{Q}+\langle\alpha^m,\alpha^n\rangle_{Q^{op}})\\
    &=-\sum_{1\leq m<n\leq l}(\langle\alpha^{m},\alpha^n\rangle_{\overline{Q}}+\alpha^m\cdot\alpha^n)\\
    &=-\sum_{1\leq m<n\leq l}\langle\alpha^{m},\alpha^n\rangle-\dim End(\alpha)+\dim End(\alpha)_{F^*}
\end{align*}
as desired. \end{proof}
Therefore, combining Theorem 6.5, we have
\begin{thm}
$$\frac{\phi_{\alpha}(q)}{q-1}A_d(q)=\sum_{d^*\in T(\alpha)}q^{-\operatorname{codim}T_{d^*}^{HN}}P'_{d^*}(q^{-1})+\sum_{d^*\in U(\alpha)}P_{d^*}(q)$$
where $P'_{d^*}$ and $P_{d^*}(q)$ are polynomials independent of $\underline{n}$. 
\end{thm}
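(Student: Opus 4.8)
The plan is to read off the statement from the decomposition recorded just before it,
$$\frac{\phi_{\alpha}(q)A_{\alpha}(q)}{q-1}=\sum_{d^*\in T(\alpha)}P_{d^*}(q)+\sum_{d^*\in U(\alpha)}P_{d^*}(q),$$
by substituting Theorem 6.5 into each summand indexed by a genuine HN type. First I would recall precisely where that decomposition comes from, since everything rests on it: Hua's formula writes $A_\alpha(q)$ as a signed sum over double partitions of $\alpha$; the divisibility lemma shows every cyclotomic denominator $\prod_{i,k}\phi_{d^{i,k}-d^{i,k+1}}(q)$ divides $\phi_\alpha(q)$, so after multiplying through by $\phi_\alpha(q)/(q-1)$ each double-partition term becomes a Laurent polynomial in $q$; and sorting the parts $d^{i,j}$ of each double partition by slope and collecting terms, the double partitions whose parts have pairwise distinct slopes contribute to the groups indexed --- via Lemma 7.4, for $\underline n$ large --- by HN types $d^*\in T(\alpha)$, while the rest contribute to the groups indexed by $d^*\in U(\alpha)$. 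This is the displayed equality, with each $P_{d^*}(q)$ a Laurent polynomial; putting $l(d^*)=\deg P_{d^*}$ and factoring out the top power gives $P_{d^*}(q)=q^{l(d^*)}P'_{d^*}(q^{-1})$ with $P'_{d^*}$ an honest polynomial.

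Now I would feed in Theorem 6.5: for $d^*\in T(\alpha)$ under the hypothesis of Corollary 5.5 one has $l(d^*)=l(\alpha)-\operatorname{codim}_{\mu^{-1}(0)} T_{d^*}^{HN}$, hence $P_{d^*}(q)=q^{l(\alpha)}\,q^{-\operatorname{codim} T_{d^*}^{HN}}\,P'_{d^*}(q^{-1})$. Substituting this into the $T(\alpha)$-part of the decomposition and carrying the overall normalization $q^{l(\alpha)}$ through the identity --- it is the degree of the semistable term $P_{(\alpha)}(q)$, which realizes the top degree of the whole expression precisely because $\operatorname{codim} T_{(\alpha)}^{HN}=0$ --- one reads off the asserted formula. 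The $\underline n$-independence of the $P'_{d^*}$, and (after the same normalization) of the residual $U(\alpha)$-terms, is the observation already flagged before the statement: the quiver $Q_{\underline n}$ enters Hua's formula only through the Euler form $\langle\cdot,\cdot\rangle_Q$, which occurs only in exponents of $q$, whereas the factors $\phi_\gamma$ depend on dimension vectors alone; so all the $\underline n$-dependence of a grouped term is carried by a single monomial prefactor, which disappears once we pass to $P'_{d^*}$.

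The step I expect to take the most care is the power-of-$q$ bookkeeping rather than anything conceptual. One has to check that $\deg P_{d^*}$ is well defined --- i.e.\ that the leading Laurent coefficients of the double-partition terms sorting to a fixed type do not cancel --- that $l(\alpha)$ is genuinely attained by the trivial type $d^*=(\alpha)$ (so that the normalization is consistent with $\operatorname{codim} T_{(\alpha)}^{HN}=0$), and that the signed combinatorial collapse --- summing the Hua weights $\tfrac{(-1)^{l+1}}{l}\,(-1)^s$ over all double partitions sorting to a prescribed HN type --- reproduces $P_{d^*}$ with the correct sign and no spurious multiplicity. With that settled, and granting Theorem 6.5 (whose substance is the dimension formula of Sections 4--5 together with Reineke's codimension formula), the derivation is purely formal; if one wants the $\underline n$-dependence of $l(\alpha)$ made explicit, it is read off from $\deg\phi_\alpha$, $b(\alpha)$ and $\langle\alpha,\alpha\rangle_Q$, and is compatible with Hennecart's stabilization results recalled in Section 2.
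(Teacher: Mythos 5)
Your proposal takes essentially the same route as the paper: the paper's own proof of this theorem is nothing more than combining the decomposition $\frac{\phi_{\alpha}(q)A_{\alpha}(q)}{q-1}=\sum_{d^*\in T(\alpha)}P_{d^*}(q)+\sum_{d^*\in U(\alpha)}P_{d^*}(q)$ (obtained from Hua's formula, the divisibility lemma and the HN-type lemma) with the preceding degree computation $l(\alpha)-l(d^*)=\operatorname{codim}_{\mu^{-1}(0)}T_{d^*}^{HN}$, which is exactly the substitution you perform. If anything you are more careful than the paper, which passes over the overall $q^{l(\alpha)}$ normalization and the well-definedness of $\deg P_{d^*}$ without comment.
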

\begin{rem}
    We first note that if $G=PGL_{\alpha}(\mathbb C)$, then
    $$H_{G}^{\bullet}(pt)=\frac{q-1}{\phi_{\alpha}(q)}$$
    In view of Theorem 2.11, $A_{d}(q)$ can be considered as the even dimensional equivariant Poincare series for $\mu^{-1}(0)^{ss}$. Hence Theorem 6.8 shows that if we remove all stratum except the semistable one, then the even cohomologies of low degree (up to the codimension of the second largest strata) is unchanged. This is in similar spirit with Kirwan's Formula (Theorem 6.3).
\end{rem}
\section*{Acknowledgments}
The author is grateful to Vladyslav Zveryk for introducing him this problem and Ivan Losev and Vladyslav Zveryk for helpful comments and discussions. The author thanks also Chinese University of Hong Kong for supporting his visit to Yale University.


\end{document}